\numberwithin{equation}{section}     
\theoremstyle{plain}
\newtheorem{theorem}{Theorem}[section]
\newtheorem{lemma}{Lemma}[section]
\newtheorem{hypothesis}{Hypothesis}[section]
\newtheorem{corollary}{Corollary}[section]
\newtheorem{proposition}{Proposition}[section]
\newtheorem{remark}{Remark}[section]
\newcommand{\<}{\langle}
\renewcommand{\>}{\rangle}
\newcommand{\lqq}{\leqslant}
\newcommand{\gqq}{\geqslant}
\newcommand{\dv}{\langle\langle}
\newcommand{\dw}{\rangle\rangle}
\newcommand{\ud}{\mathrm{d}}
\newcommand{\pd}{\partial}
\newcommand{\Lap}{\Delta}
\newcommand{\ra}{\rightarrow}
\newcommand{\xX}{\mathcal{X}}
\newcommand{\kK}{\mathcal{K}}
\newcommand{\cL}{\mathcal{L}}
\newcommand{\mM}{\mathcal{M}}
\newcommand{\pP}{\mathcal{P}}
\newcommand{\Wp}{\mathcal{W}_p}
\newcommand{\HL}{{\mathcal{H}}_{\Lambda}}
\newcommand{\Sg}{\mathcal{S}_{\gamma}}
\newcommand{\aA}{\mathcal{A}}
\newcommand{\NN}{\mathbb{N}}
\newcommand{\RR}{\mathbb{R}}
\newcommand{\EE}{\mathbb{E}}
\newcommand{\la}{\lambda}
\newcommand{\ga}{\gamma}
\newcommand{\vr}{\varrho}
\newcommand{\e}{\varepsilon}
\newcommand{\Rea}{\mathsf{Re}}
\newcommand{\ii}{\mathbf{i}}
\definecolor{DarkBlue}{rgb}{0,0,0.7}
\definecolor{DarkRed}{rgb}{0.95,0,0}
\begin{document}
\title[The cutoff phenomenon for the stochastic heat and wave equation subject to small L\'evy noise]{The cutoff phenomenon for the stochastic heat and wave equation subject to small L\'evy noise}
\author{Gerardo Barrera}
\address{University of Helsinki, Department of Mathematical and Statistical Sciences. Exactum in Kumpula Campus. PL~68, Pietari Kalmin katu 5.
Postal Code: 00560. Helsinki, Finland.}
\email{gerardo.barreravargas@helsinki.fi}
\author{Michael A. H\"ogele}
\address{Departamento de Matem\'aticas, Universidad de los Andes, Bogot\'a, Colombia.}
\email{ma.hoegele@uniandes.edu.co}
\author{Juan Carlos Pardo}
\address{
CIMAT. Jalisco S/N, Valenciana, CP 36240. Guanajuato, Guanajuato, M\'exico.}
\email{jcpardo@cimat.mx }
\keywords{Exponential ergodicity, Error estimates, Heat semigroup, Infinite dimensional Ornstein-Uhlenbeck processes,
L\'evy processes, Stochastic partial differential equations, The cutoff phenomenon, Wasserstein distance} 
\subjclass{60H15; 37A30; 60G51; 47D06}

\maketitle
\begin{abstract}
This article generalizes the small noise cutoff phenomenon obtained recently by Barrera, H\"ogele and Pardo (JSP2021) to the strong solutions of the stochastic heat equation and the damped stochastic wave equation over a bounded domain subject to additive and multiplicative Wiener and L\'evy noises in the Wasserstein distance. 
The methods rely  on the explicit knowledge of the respective eigensystem of the stochastic heat and wave operator and 
the explicit representation of the multiplicative stochastic solution flows in terms of stochastic exponentials. 
\end{abstract}

\section{Introduction}
\markboth{G. Barrera, M. A. H\"ogele and J.C. Pardo}{The cutoff phenomenon for the heat and the wave equation subject to small L\'evy noise}
Stochastic partial differential equations with Gaussian and non-Gaussian L\'evy noise 
are ubiquitous in the applications 
 and have produced a vast literature in mathematics in recent years, standards texts include
\cite{Albeverio,ApplebaumWu,BradleyJurek,BrzezniakZabczyk,
Daprato1982,DapratoGatarek,DaPratoZabczyk1996,DaPratoZabczyk2002,
Dapratobook,Debussche,FuhrmanRockner2000,
LescotRockner2004,Salaheldin,PrevotRockner,PRIOLA,
PriolaZabczyk2010,Riedle2015,Walsh1986}.
This article generalizes the small noise cutoff phenomenon 
from finite-dimensional 
ergodic Ornstein-Uhlenbeck systems 
which was established in \cite{BHPWA} for the Wasserstein distance 
to a class of linear stochastic partial differential equations with Wiener and L\'evy noise. 
 
More precisely, we study the asymptotically abrupt (as $\e\ra 0$) 
ergodic convergence of the strong solutions of the linear stochastic partial differential equations of the following type 
\begin{equation}\label{eq:defsde1}
\ud X^\e_t(h) = A X^\e_t(h) \ud t + \e  \ud \eta_t, \qquad X^\e_0(h) = h\in H, \qquad \e>0,  
\end{equation}
where $A$ is either the Dirichlet Laplacian in an appropriate Hilbert space $H$ or the respective matrix-valued damped wave operator.
We treat additive and multiplicative noise.
For additive noise the process $\eta = L$ with $L = (L_t)_{t\gqq 0} $ is either a $Q$-Brownian motion or a L\'evy process with values in $H$. The generalization of the results in \cite{BHPWA} relies on the explicit knowledge of the eigensystem of $A$. In case of multiplicative noise $\ud \eta_t$ is replaced by $X^\e_t(h) \ud L_t$ in an appropriate sense. For the stochastic heat equation with multiplicative noise we still have a detailed knowledge of the stochastic flow under commutativity assumptions, which allows to establish the cutoff phenomenon. This method seems to break down for the stochastic wave equation with multiplicative noise due to infeasibly strong commutativity requirements on the noise coefficients.  

In a series of articles, \cite{BHPWA,BHPWANO, BHPTV, BJ, BJ1, BP},
the authors have studied the cutoff phenomenon of finite dimensional stochastic differential equations. 
Their setting covers linear or smooth coercive nonlinear dynamical systems close to an asymptotically exponentially stable fixed point subject to different small additive white and red noises in the (renormalized) Wasserstein distance and the total variation distance. 
The idea of this article is to establish the cutoff phenomenon  
to the most famous and elementary class of stochastic partial differential equations, 
that is, the stochastic heat and the stochastic wave equation. 

The concept of the cutoff phenomenon was coined by Aldous and Diaconis in the context of card-shuffling in  \cite{AD}.
{
It roughly states 
the asymptotically abrupt convergence of the marginals to the dynamical equilibrium as a function of the deck size.
As an introduction to
the cutoff phenomenon in discrete time and space we recommend to~\cite{Jonson} and Chapter~18 
in~\cite{LPW}.
Since the seminal paper \cite{AD}, this threshold behavior has been shown to be present in many discrete stochastic systems and most of the results are stated in terms of the total variation distance. However, 
citing \cite{BY1} ``This precision comes at a high technical price and is largely responsible for the variety of treatments found in the literature." 
For standard texts illustrating the mentioned  mathematical richness of cutoff thermalization in \textit{discrete cutoff parameter} we refer} to 
\cite{Al83, AD87, AD, BY1,  BLY06, BY2014, BD92,  BBF08, CSC08, DI87, DIA96, DGM90, DS, La16,  Lachaud2005, LanciaNardiScoppola12, LLP10, LPW, LubetzkySly13, Meliot14, Nestoridi, Pillai, Scarabotti, Trefethenbrothers, Yc99}. 
{Most recent developments in this large field of active research
include topics ranging from
sparse random graphs, sparse random digraphs, sparse Markov chains, sparse bistochastic random matrices, square plaquette model, zero-range process to asymmetric simple exclusion processes (ASEP) 
} are found in \cite{B-HLP19, BCS19, BCS18, BQZ20,   CS21, HS20, LL19}. For a precise review on the results for stochastic differential equations and its embedding in the literature of the discrete cutoff phenomenon we refer to the introductions in \cite{BHPWA, BHPTV,BP}.  

In its weakest version the presence of a \textit{cutoff phenomenon} means the asymptotically abrupt collapse of a suitably renormalized distance $d_\e$ (with $\liminf_{\e \ra 0}\mbox{diam}(d_\e) = \infty$) between the marginal distributions $X^\e_t(h)$ {of \eqref{eq:defsde1}} and the respective invariant limiting distribution $\mu^\e$ along deterministic time scale $t_{\e, h}$ ($t_{\e, h} \ra \infty$ as $\e \ra 0$) in the sense that  
\begin{equation}\label{e: general cutoff}
d_\e (X^\e_{\delta \cdot t_{\e, h}} (h), \mu^\e)\stackrel{\e \ra 0}{\longrightarrow} \begin{cases} \infty & \mbox{ for }\delta>1,\\0 & \mbox{ for }\delta \in (0,1).\end{cases}
\end{equation}
Stronger notions of the cutoff phenomenon come in two formulations. Roughly speaking, the weaker and virtually universally valid \textit{window cutoff phenomenon} describes limits of type \eqref{e: general cutoff}, where the time scale divergence in \eqref{e: general cutoff} $\delta \cdot t_{\e, h} = t_{\e, h}  -(1-\delta) t_{\e, h} \ra \infty$ is of the type 
$t_{\e, h} + \vr$ for any $\rho \ra \infty$ and $\vr \ra - \infty$. 
The \textit{profile cutoff phenomenon} conceptualizes the stronger statement that the limit 
\begin{equation}\label{e: general cutoff0900}
d_\e (X^\e_{t_{\e, h}+\vr} (h), \mu^\e) \stackrel{\e \ra 0}{\longrightarrow} \pP_{h}(\vr)
\end{equation}
exists for any \textit{fixed} $\vr\in \RR$.
It is the main result of \cite{BHPWA} to characterize 
the necessity and sufficiency of the limit $\pP_{h}(\vr)$ for $d_\e = \e^{-1}\cdot \mathcal{W}_p$, $p\gqq 1$, in terms of the normal growth pattern of the (generalized) eigenvectors of $A$, where $\mathcal{W}_p$ is the Wasserstein distance of order $p$. The main step 
consists in the application of a non-standard property of the Wasserstein distance $\mathcal{W}_p$  for $p\gqq 1$ 
established  in Lemma~\ref{lem:properties}.(d) called
\textit{shift linearity}.
The shift linearity 
 allows to determine the  precise shape of the cutoff profile (in case it exists) as 
\[
\pP_{h}(\vr) = K_h \cdot e^{-\Lambda_h \vr},\qquad \vr\in \RR, 
\]
where $K_h, \Lambda_h$ are explicit positive spectral constants associated to the operator $A$ and the initial condition~$h$. In the spirit of \cite{BHPWA} this article combines 
\begin{enumerate}
\item[i)] the precise knowledge of the spectrum and the eigenvectors of the Dirichlet Laplacian 
(and the respective wave operator on the product space of  position times velocity), 
 \item[ii)] the analogous results of Lemma~2.1 in~\cite{BHPWA} correspondingly 
 adapted to the infinite dimensional setting~(Lemma~\ref{l:jaraheat} and Proposition~\ref{prop:casos}) and 
  \item[iii)] the properties of the Wasserstein distance, in particular, the shift linearity (item  d) in Lemma~\ref{lem:properties}). 
 \end{enumerate}
 In case of multiplicative noise, instead of items iii), item i) and ii) are combined with 
\begin{enumerate}
 \item[iv)] the precise representation of the stochastic flow for multiplicative noise and the triviality of the invariant measure~$\mu^\e$.
 \end{enumerate}
As a consequence we establish the following three main results. 
In Theorem~\ref{th:cutoffprofileabstrat} and Corollary~\ref{th:cutoffprofile} we establish the profile cutoff phenomenon for the stochastic heat equation with additive noise, with the help of the self-adjointness and the negativity of the spectrum of the Dirichlet-Laplacian. The profile is calculated explicitly. This is a  generalization of the finite dimensional case. In Theorem~\ref{th:abstwave} it is the precise knowledge of the Dirichlet Laplacian and its extension to the respective wave operator which allows to establish a nontrivial extension of the results of the stochastic damped linear oscillator in \cite{BHPWA} Subsection 4.2. The case of the stochastic heat equation with multiplicative noise is completely new even in finite dimensions. In Theorem~\ref{t:multprofile} and Corollary~\ref{cor:infinitos} the profile cutoff phenomenon is shown for multiplicative $Q$-Brownian motion with the help of the explicit representation of the associated Brownian flow which retains the precise knowledge of the initial value. In Theorem~\ref{t:multprofilelevy} and Corollary~\ref{cor:infinitoslevy} the analogous results are shown for a pure jump L\'evy noise. 
The case of the stochastic wave equation with multiplicative noise seems not to be feasible for cases of interest due to the lack of commutativity.

\bigskip 
\section{Stochastic heat and wave equation with additive noise}\label{s:HB+L}

\subsection{Ornstein-Uhlenbeck process}\label{ss:OUP}
Let $H$ be a separable Hilbert space
and $(S(t))_{t\gqq 0}$ be a $C_0$-semigroup having infinitesimal
generator $A$, and acting in $H$.
That is, $(S(t)h)_{t\gqq 0}$ is given as the {mild} solution of the Cauchy problem
\begin{equation}\label{e:semi}
\left\{
\begin{array}{r@{\;=\;}l}
{\frac{\pd}{\pd t}}S(t)h&A S(t)h\quad  
\textrm{  for any }\quad t\gqq 0,\\
S(0)h & h\in H.
\end{array}
\right.
\end{equation}
Let $\e\in (0,1]$ and $L=(L_t)_{t\gqq 0} $ be a L\'evy process taking values on $H$.
For any $t>0$ the law of $L_t$ is uniquely determined by its characteristics $(b,Q,\nu)$. 
We refer to Section~2 in \cite{Applebaum} for details.
For any $h\in H$ we consider $(X^{\e}_t(h))_{t\gqq 0}$  the unique {mild} solution of the linear stochastic differential equation
\begin{equation}\label{e:OU}
\left\{
\begin{array}{r@{\;=\;}l}
\ud X^{\e}_t(h)&A X^{\e}_t(h)\ud t +
\e \ud L_t\quad  
\textrm{  for any }\quad t\gqq 0,\\
X^{\e}_0(h) & h\in H.
\end{array}
\right.
\end{equation}
The variation of constants formula yields
\begin{equation}\label{e:voc}
X^{\e}_t(h)=S(t)h+\e \int_{0}^{t}S(t-s)\ud L_s.
\end{equation}
In this article we assume that $H$ is infinite-dimensional. The finite dimensional case is completely discussed in \cite{BHPWA}.
\begin{hypothesis}[Asymptotically exponentially stable]\label{h:expsta}
We assume that the  semigroup $(S(t))_{t\gqq 0}$ is asymptotically exponentially stable, i.e. for some $C_*\gqq  1$, $\lambda_*>0$
\begin{equation}\label{e:expsta}
\|S(t)\|\lqq  C_*e^{-\lambda_* t}\quad \textrm{ for all }\quad t\gqq 0,
\end{equation}
where $\|\cdot \|$ denotes the operator norm.
\end{hypothesis}
\noindent Under Hypothesis~\ref{h:expsta}, 
Theorem~6.7 in~\cite{AnnaJ} yields that
\[
\int_{|x|>1}
\log(|x|)\nu(\ud x)<\infty
\]
is a sufficient condition for the existence of a
(unique) invariant measure $\mu^\e$ for the random dynamics given in \eqref{e:OU}.

\bigskip 
\subsection{Wasserstein metric of order $p$-th on $H$}\label{ss:dist}
Since our main results are stated in the Wasserstein distance we gather its most important properties. 
For any two probability distributions $\mu_1$ and $\mu_2$ on $H$
with finite $p$-th moment for some $p>0$, the Wasserstein distance of order $p$ between them is given by 
\begin{equation*}\label{d:war}
\Wp(\mu_1,\mu_2)=
\inf_{\Pi} \left(\int_{H\times H}|u-v|^{p}\Pi(\ud u,\ud v)\right)^{1\wedge (1/p)},
\end{equation*}
where the infimum is taken over all 
couplings $\Pi$ of marginals $\mu_1$ and $\mu_2$, see for instance \cite{Panaretos,Vi09}.
The main properties used in this manuscript are gathered for convenience in the following lemma.
\begin{lemma}[Properties of the Wasserstein distance]\label{lem:properties}\hfill\\
Let $p>0$, $u_1,u_2\in H$, $c\in \RR$ be given and $U_1, U_2$ random vectors in $H$ with finite $p$-th moment. Then we have: 
\begin{itemize}
\item[(a)] The Wasserstein distance defines a metric. 
\item[(b)] Translation invariance: 
$\Wp(u_1+U_1,u_2+U_2)=\Wp(u_1-u_2+U_1,U_2)$.
\item[(c)] Homogeneity: 
\[
\Wp(c\cdot U_1,c\cdot U_2)=
\begin{cases}
|c|\;\Wp(U_1,U_2)&\textrm{ for } p\in [1,\infty),\\
|c|^{p}\;\Wp(U_1,U_2)&\textrm{ for } p\in (0,1).
\end{cases}
\]
\item[(d)] Shift linearity: 
For $p\gqq 1$ it follows 
\begin{equation}\label{eq:shitflinearity}
\Wp(u_1+U_1,U_1)=|u_1|.
\end{equation}
For $p\in(0,1)$ equality \eqref{eq:shitflinearity} is false in general, see Remark~2.4 in~\cite{BHPWA}. However, for any $p>0$ we have the following inequality
\begin{equation}\label{e:cotauni}
\max\{|u_1|^{1\wedge p}-2\EE[|U_1|^{1\wedge p}],0\}\lqq \Wp(u_1+U_1,U_1)\lqq |u_1|^{1\wedge p}.
\end{equation}
\item[{(e)}] Shift continuity:  For any deterministic sequence $(h_n)_{n\in \mathbb{N}}\subset H$ such that $h_n\to h\in H$ as $n \to \infty$ it follows that 
\[
\lim\limits_{n\to \infty}
\Wp(h_n+U_1, U_1) =\Wp(h+U_1, U_1).
\]
\end{itemize}
\end{lemma}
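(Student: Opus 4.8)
The plan is to reduce items (a)--(c) to standard optimal-transport facts, to isolate the shift linearity (d) as the one genuinely non-trivial ingredient, and to deduce (e) from (a) together with the upper bound obtained in (d). For (a): non-negativity is clear and symmetry follows by transposing couplings. Since $H$ is a separable Hilbert space, hence Polish, and the cost $(u,v)\mapsto|u-v|^p$ is continuous, Prokhorov's theorem together with lower semicontinuity of the cost functional produces an optimal coupling $\Pi^\ast$ for any pair of marginals; if $\Wp(\mu_1,\mu_2)=0$ then $\int_{H\times H}|u-v|^p\,\Pi^\ast(\ud u,\ud v)=0$ forces $\Pi^\ast$ to be carried by the diagonal, so $\mu_1=\mu_2$. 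For the triangle inequality I would invoke the \emph{gluing lemma}: glue couplings $\Pi_{12}$ of $(\mu_1,\mu_2)$ and $\Pi_{23}$ of $(\mu_2,\mu_3)$ along the shared marginal $\mu_2$ into a probability measure on $H^3$, push it forward to the first and third coordinates, and conclude by Minkowski's inequality when $p\gqq1$ and by the subadditivity of $t\mapsto t^p$ when $p\in(0,1)$; in the latter range $|u-v|^p$ is itself a metric on $H$, so $\Wp$ inherits the metric axioms. These facts are classical and I would simply cite \cite{Vi09,Panaretos}.

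For (b) and (c) the key observation is that the affine maps $\tau_w(x)=x+w$ and $\sigma_c(x)=cx$ induce, by push-forward, bijections between the couplings of the translated (respectively rescaled) laws and the couplings of the law of $U_1$ and the law of $U_2$. Along these bijections the transport cost transforms as $|(u_1+x)-(u_2+y)|^p=|(u_1-u_2)+x-y|^p$ in the first case and as $|cx-cy|^p=|c|^p|x-y|^p$ in the second; taking the infimum over couplings and then applying the outer exponent $1\wedge(1/p)$ yields (b) and (c) at once, the case $c=0$ being immediate.

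The core of the lemma is (d). For the upper bound I would use the \emph{synchronous coupling}, i.e.\ push the law of $U_1$ forward by $x\mapsto(u_1+x,x)$; this couples the law of $u_1+U_1$ with the law of $U_1$ at cost $\EE\big[\,|(u_1+U_1)-U_1|^p\,\big]=|u_1|^p$, so $\Wp(u_1+U_1,U_1)\lqq|u_1|^{1\wedge p}$, which already gives the right-hand inequality in \eqref{e:cotauni}. For the matching lower bound in the regime $p\gqq1$, I would fix an arbitrary coupling $\Pi$ of $(u_1+U_1,U_1)$ and chain two Jensen estimates, using first the convexity of $t\mapsto t^p$ and then the convexity of the norm:
\[
\Big(\int_{H\times H}|a-b|^p\,\Pi(\ud a,\ud b)\Big)^{1/p}\;\gqq\;\int_{H\times H}|a-b|\,\Pi(\ud a,\ud b)\;\gqq\;\Big|\int_{H\times H}(a-b)\,\Pi(\ud a,\ud b)\Big|\;=\;|u_1|,
\]
the Bochner integral in the middle being well defined because $p\gqq1$ guarantees finite first moments, and its value being $\EE[u_1+U_1]-\EE[U_1]=u_1$ from the marginal constraints; taking the infimum over $\Pi$ yields $\Wp(u_1+U_1,U_1)\gqq|u_1|$, hence (d), and then \eqref{e:cotauni} for $p\gqq1$ is trivial. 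For $p\in(0,1)$, where the outer exponent equals $1$, I would instead start from the pointwise inequality $|a-b|^p\gqq|a|^p-|b|^p$ and integrate against $\Pi$ to obtain $\int|a-b|^p\,\ud\Pi\gqq\EE[|u_1+U_1|^p]-\EE[|U_1|^p]\gqq|u_1|^p-2\,\EE[|U_1|^p]$, the last step using $|u_1|^p\lqq|u_1+U_1|^p+|U_1|^p$ once more; together with the trivial lower bound $0$ this is the left-hand inequality in \eqref{e:cotauni}.

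Finally, (e) follows by combining the metric property from (a) with the upper bound just established: the reverse triangle inequality gives $|\Wp(h_n+U_1,U_1)-\Wp(h+U_1,U_1)|\lqq\Wp(h_n+U_1,h+U_1)$, and the synchronous coupling of $h_n+U_1$ with $h+U_1$ bounds the right-hand side by $|h_n-h|^{1\wedge p}$, which tends to $0$. The step I expect to be the main obstacle is the lower bound in (d) --- the shift linearity --- since the upper bound is elementary but matching it exactly requires the Jensen/duality argument above; this is precisely the non-standard property flagged in the introduction and the one that later pins down the exact cutoff profile. Everything else is either classical optimal transport or a routine push-forward computation.
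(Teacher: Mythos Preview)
Your proof is correct and follows essentially the same route as the paper: items (a)--(c) are delegated to standard optimal-transport references, the upper bound in (d) comes from the synchronous coupling, the $p\gqq1$ lower bound from the Bochner-mean/Jensen argument, and (e) from the triangle inequality together with the upper bound in (d). The only noteworthy difference is the $p\in(0,1)$ lower bound in \eqref{e:cotauni}: the paper obtains it via the $\Wp$-triangle inequality applied to the chain $u_1\to u_1+U_1\to U_1\to 0$ together with (b), whereas you work pointwise with the subadditivity $|a|^p\lqq|a-b|^p+|b|^p$ inside an arbitrary coupling; both arguments are one-line and yield the same bound, so this is a cosmetic rather than a substantive divergence.
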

\noindent
{Property (a) is standard, see for instance \cite{Vi09}.}
The properties (b) {and (c)}  
{hold in any Banach space,}
see {for instance} Lemma~2.2 in~\cite{BHPWA} {for $H=\mathbb{R}^d$}. 
The proof of item (d) is given in Appendix~\ref{Ap:A1}. Property {(e)} is a direct consequence of 
\eqref{e:cotauni} and item (b).

\bigskip 
\subsection{Existence of an invariant probability measure $\mu^\e$}\label{ss:ergo}
Since our results are stated in terms of $\mathcal{W}_p$ for some $p>0$, we need the following moment condition.
\begin{hypothesis}[Finite $p$-th moment] \label{h:moment}
There exists $p>0$ such that 
\[
\int_{|x|>1}
|x|^{p}\nu(\ud x)<\infty.
\]
\end{hypothesis}
\noindent We note that 
$\mu^\e\stackrel{d}{=}\e \widetilde{L}_\infty$, where $\widetilde{L}_\infty$ is the limit as $t \to \infty$ (in distribution) of 
\begin{equation}\label{eq:Ltilde}
 \widetilde{L}_t:=\int_{0}^{t}S(t-s)\ud L_s.
\end{equation}
In particular, 
$\mu^1\stackrel{d}{=}\widetilde{L}_\infty$. In the sequel this result is strengthened to the convergence with respect to $\mathcal{W}_p$.
\begin{lemma}[Ergodic convergence in Wasserstein distance]
\label{l:convWp}
Assume Hypotheses~\ref{h:expsta} and \ref{h:moment} are satisfied for some $p>0$.
Then for any $h\in H$ and  $t\gqq 0$ 
\[
\Wp(X^\e_t(h),\mu^\e)\lqq
(C_* e^{-\la_* t}|h|)^{1\wedge p}+
(\e C_* e^{-\la_* t})^{1\wedge p}\int_{H} |h_0|^{1\wedge p}\mu^1(\ud h_0). 
\]
\end{lemma}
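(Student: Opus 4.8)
The plan is to realize the marginal $X^\e_t(h)$ and the invariant law $\mu^\e$ on a common probability space via an explicit coupling whose transport cost contracts at the rate $\|S(t)\|$, and then to read off the two summands from items (a), (c), (d) of Lemma~\ref{lem:properties}. The starting point is the pair of representations already available above: by the variation of constants formula \eqref{e:voc} one has $X^\e_t(h)=S(t)h+\e\widetilde L_t$ with $\widetilde L_t=\int_0^t S(t-s)\,\ud L_s$ as in \eqref{eq:Ltilde}, while $\mu^\e\stackrel{d}{=}\e\widetilde L_\infty$ and $\mu^1\stackrel{d}{=}\widetilde L_\infty$, where $\widetilde L_\infty$ is the distributional limit of $\widetilde L_t$ (whose existence is guaranteed under Hypotheses~\ref{h:expsta} and \ref{h:moment}).

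The decisive structural fact is the self-consistency identity
\[
\widetilde L_\infty \stackrel{d}{=} \widetilde L_t + S(t)\,\widetilde L_\infty^{\sharp},
\]
where $\widetilde L_\infty^{\sharp}$ is a copy of $\widetilde L_\infty$ that is independent of $\widetilde L_t$. I would prove this by the usual time-reversal trick: for fixed $t$ the process $s\mapsto L_t-L_{(t-s)\vee 0}$ has the same law on $[0,t]$ as $L$, so $\widetilde L_t\stackrel{d}{=}\int_0^t S(s)\,\ud L_s$; splitting $\int_0^\infty S(s)\,\ud L_s=\int_0^t S(s)\,\ud L_s+S(t)\int_0^\infty S(u)\,\ud(L_{u+t}-L_t)$ and invoking the stationary and independent increments of $L$ together with the convergence $\widetilde L_r\Rightarrow\widetilde L_\infty$ then yields the identity. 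In particular $\mu^\e$ is realized as the law of $\e\widetilde L_t+\e S(t)\widetilde L_\infty^{\sharp}$.

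With this in hand I would apply the triangle inequality for $\Wp$ (item (a)):
\[
\Wp\big(X^\e_t(h),\mu^\e\big)\ \le\ \Wp\big(S(t)h+\e\widetilde L_t,\ \e\widetilde L_t\big)\ +\ \Wp\big(\e\widetilde L_t,\ \e\widetilde L_t+\e S(t)\widetilde L_\infty^{\sharp}\big).
\]
For the first summand, the upper bound in the shift-linearity inequality \eqref{e:cotauni} (item (d)) together with Hypothesis~\ref{h:expsta} gives $\Wp(S(t)h+\e\widetilde L_t,\e\widetilde L_t)\le|S(t)h|^{1\wedge p}\le(C_*e^{-\la_* t}|h|)^{1\wedge p}$. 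For the second summand I would first pull out the scalar by homogeneity (item (c)), reducing to $\e^{1\wedge p}\,\Wp(\widetilde L_t,\widetilde L_t+S(t)\widetilde L_\infty^{\sharp})$, and then use the coupling that keeps $\widetilde L_t$ fixed and samples $\widetilde L_\infty^{\sharp}$ independently. Conditioning on $\widetilde L_\infty^{\sharp}$ turns the relevant shift into a deterministic one so that item (d) applies fibrewise, and assembling the fibres (a mixture of couplings) shows that the cost of this coupling is controlled, up to the factor $\|S(t)\|^{1\wedge p}\le(C_*e^{-\la_* t})^{1\wedge p}$, by the $p$-th moment of $\widetilde L_\infty^{\sharp}$, that is by $\int_H|h_0|^{1\wedge p}\,\mu^1(\ud h_0)$ (using $\widetilde L_\infty^{\sharp}\stackrel{d}{=}\widetilde L_\infty$ and $\mu^1\stackrel{d}{=}\widetilde L_\infty$), which is finite by Hypothesis~\ref{h:moment}. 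Adding the two bounds gives the asserted estimate.

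The routine ingredients are the elementary inequality $(a+b)^{1\wedge p}\le a^{1\wedge p}+b^{1\wedge p}$, the bookkeeping of the coupling and the mixture, and the passage to the limit $\widetilde L_r\Rightarrow\widetilde L_\infty$ combined with continuity of $S(t)$. The one point that genuinely needs care is the self-consistency identity $\widetilde L_\infty\stackrel{d}{=}\widetilde L_t+S(t)\widetilde L_\infty^{\sharp}$ with the stated independence: it is exactly what upgrades the qualitative statement ``$\widetilde L_t$ converges to $\widetilde L_\infty$'' into the quantitative contraction at rate $\|S(t)\|$, and it rests squarely on the stationary and independent increments of the Lévy process. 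Everything else is a direct application of items (a), (c) and (d) of Lemma~\ref{lem:properties}.
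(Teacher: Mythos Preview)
Your argument is correct and is essentially the paper's proof repackaged. The paper invokes the Markov property to write $\mu^\e=\int_H \mathrm{Law}(X^\e_t(h_0))\,\mu^\e(\ud h_0)$ and then uses the synchronous coupling $X^\e_t(h)-X^\e_t(h_0)=S(t)(h-h_0)$ together with a disintegration step $\Wp(X^\e_t(h),\mu^\e)\le\int_H\Wp(X^\e_t(h),X^\e_t(h_0))\,\mu^\e(\ud h_0)$, finally splitting $|h-h_0|^{1\wedge p}\le |h|^{1\wedge p}+|h_0|^{1\wedge p}$. Your self-consistency identity $\widetilde L_\infty\stackrel{d}{=}\widetilde L_t+S(t)\widetilde L_\infty^{\sharp}$ is exactly that invariance, expressed at the level of the random variables, and your ``mixture of couplings'' is the same synchronous coupling integrated over $h_0\sim\mu^1$; you merely perform the triangle-inequality split \emph{before} averaging (isolating the deterministic shift $S(t)h$ first via item~(d)) whereas the paper averages first and splits afterwards. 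Both routes land on the same two summands; the paper's version is shorter because it cites the Markov property instead of re-deriving the fixed-point identity via time reversal.
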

\begin{proof}
Let $h,h_0 \in H$ and $t\gqq 0$. 
The natural coupling and \eqref{e:expsta} imply
\begin{equation}
|X^\e_t(h)-X^\e_t(h_0)|=|S(t)(h-h_0)|\lqq \|S(t)\||h-h_0|\lqq C_* e^{-\la_* t}(|h|+|h_0|).
\end{equation}
{Due to \eqref{d:war} the Wasserstein distance is dominated by the $L^2$-distance which yields}
\[
\Wp(X^\e_t(h),X^\e_t(h_0))\lqq 
C^{(1\wedge p)}_* e^{-\la_*(1\wedge p) t} (|h|^{1\wedge p}+|h_0|^{1\wedge p}).
\]
By the Markov property (Theorem~5.1 in \cite{Applebaum}) and disintegration we have the desired result, that is,  
\begin{align*}
\Wp(X^\e_t(h),\mu^\e) &\lqq 
\int_{H} \Wp(X^\e_t(h),X^\e_t(h_0))\mu^\e(\ud h_0)\lqq 
(C_* e^{-\la_* t}|h|)^{1\wedge p}+
(C_* e^{-\la_* t})^{1\wedge p}\int_{H} |h_0|^{1\wedge p}\mu^\e(\ud h_0). 
\end{align*}
\end{proof}

\bigskip 
\subsection{Cutoff inequality for Ornstein-Uhlenbeck processes}\label{ss:cutoffOUP}
In case of additive noise the cutoff phenomenon for $d_\e = \e^{-1}\mathcal{W}_p$, $p>0$, 
relies on the following estimate. 
\begin{lemma}[Cutoff semigroup approximation]\label{lem:inecut}
Assume Hypotheses~\ref{h:expsta} and \ref{h:moment} for some $p>0$.
Then for any 
$h\in H$ and $t\gqq 0$ we have 
\begin{equation}\label{eq:inequalitycutoff}
\left|\frac{\Wp(X^\e_t(h),\mu^\e)}{\e^{1\wedge p}}-\Wp\Big(\frac{S(t)h}{\e}+\widetilde{L}_\infty, \widetilde{L}_\infty\Big)\right|\lqq \Wp(\widetilde{L}_t, \widetilde{L}_\infty),
\end{equation}
where $\widetilde{L}_t$ and $\widetilde{L}_\infty$ are given in \eqref{eq:Ltilde}.
In particular, for any $(t_\e)_{\e>0}$ such that $t_\e \to \infty$ as $\e \to 0$, we have
\begin{equation}\label{e:limsup}
\limsup_{\e\to 0}\frac{\Wp(X^\e_{t_\e}(h),\mu^\e)}{\e^{1\wedge p}}=
\limsup_{\e\to 0}\Wp\Big(\frac{S(t_\e)h}{\e}+\widetilde{L}_\infty, \widetilde{L}_\infty\Big)
\end{equation}
and
\begin{equation}\label{e:liminf}
\liminf_{\e\to 0}\frac{\Wp(X^\e_{t_\e}(h),\mu^\e)}{\e^{1\wedge p}}=
\liminf_{\e\to 0}\Wp\Big(\frac{S(t_\e)h}{\e}+\widetilde{L}_\infty, \widetilde{L}_\infty\Big).
\end{equation}
\end{lemma}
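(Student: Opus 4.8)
The plan is to exploit the variation-of-constants representation \eqref{e:voc} together with the identity $\mu^\e\stackrel{d}{=}\e\widetilde{L}_\infty$ and the properties of $\mathcal{W}_p$ collected in Lemma~\ref{lem:properties}. First I would observe that, using a common probability space on which $\widetilde L_t$ and $\widetilde L_\infty$ are realized, the coupling $(X^\e_t(h),\e\widetilde L_\infty)=(S(t)h+\e\widetilde L_t,\e\widetilde L_\infty)$ is admissible, so that by the triangle inequality (item (a)) together with translation invariance (item (b)) and homogeneity (item (c)) one gets
\[
\Big|\Wp(X^\e_t(h),\mu^\e)-\Wp\big(S(t)h+\e\widetilde L_\infty,\e\widetilde L_\infty\big)\Big|\lqq \Wp\big(\e\widetilde L_t,\e\widetilde L_\infty\big)=\e^{1\wedge p}\,\Wp(\widetilde L_t,\widetilde L_\infty).
\]
Here the left inequality follows by adding and subtracting $\e\widetilde L_\infty$ inside the first $\mathcal{W}_p$ and invoking the reverse triangle inequality, after rewriting $X^\e_t(h)=(S(t)h+\e\widetilde L_\infty)+\e(\widetilde L_t-\widetilde L_\infty)$ and using that $\mathcal{W}_p$ does not increase under the map that replaces $\e\widetilde L_\infty$ by $\e\widetilde L_t$; one must check that each of $\Wp(X^\e_t(h),\mu^\e)$ and $\Wp(S(t)h+\e\widetilde L_\infty,\e\widetilde L_\infty)$ is finite under Hypothesis~\ref{h:moment}, which guarantees finiteness of the relevant $p$-th moments.

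Next I would divide through by $\e^{1\wedge p}$. By homogeneity (item (c)), $\Wp(S(t)h+\e\widetilde L_\infty,\e\widetilde L_\infty)=\e^{1\wedge p}\Wp\big(\tfrac{S(t)h}{\e}+\widetilde L_\infty,\widetilde L_\infty\big)$: indeed, factoring out $\e$ one writes $S(t)h+\e\widetilde L_\infty=\e\big(\tfrac{S(t)h}{\e}+\widetilde L_\infty\big)$ and applies item (c) with $c=\e$, which produces the factor $|\e|$ for $p\gqq 1$ and $|\e|^p$ for $p\in(0,1)$, i.e. $\e^{1\wedge p}$ in both regimes. Dividing the displayed inequality by $\e^{1\wedge p}>0$ then yields precisely \eqref{eq:inequalitycutoff}.

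For the consequences \eqref{e:limsup} and \eqref{e:liminf}, I would substitute $t=t_\e$ in \eqref{eq:inequalitycutoff} and let $\e\to 0$. Since $t_\e\to\infty$ and $\widetilde L_t\stackrel{d}{\to}\widetilde L_\infty$ (established in Subsection~\ref{ss:ergo}), Lemma~\ref{l:convWp} applied with $h=0$, or rather the same bound specialized appropriately, shows $\Wp(\widetilde L_{t_\e},\widetilde L_\infty)\to 0$ as $\e\to 0$; more directly, $\Wp(\widetilde L_t,\widetilde L_\infty)$ is the $\e=1$ instance of the quantity bounded in Lemma~\ref{l:convWp} and hence tends to $0$. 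Therefore the right-hand side of \eqref{eq:inequalitycutoff} vanishes in the limit, and the elementary fact that $|a_\e-b_\e|\to 0$ implies $\limsup a_\e=\limsup b_\e$ and $\liminf a_\e=\liminf b_\e$ gives \eqref{e:limsup} and \eqref{e:liminf}.

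The main obstacle I anticipate is the careful justification of the two-sided estimate in the first step, in particular verifying that all four Wasserstein distances involved are finite (so that the reverse triangle inequality is legitimate and no $\infty-\infty$ arises) and that the coupling replacing $\e\widetilde L_\infty$ by $\e\widetilde L_t$ is genuinely admissible for the pair whose distance we wish to bound; this is where Hypothesis~\ref{h:moment} and the representation $\mu^\e\stackrel{d}{=}\e\widetilde L_\infty$ from \eqref{eq:Ltilde} are used essentially. The remaining steps are routine applications of items (a)--(c) of Lemma~\ref{lem:properties} and the already-established ergodic convergence.
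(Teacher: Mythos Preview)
Your proposal is correct and follows essentially the same route as the paper: both arguments use the representation $X^\e_t(h)\stackrel{d}{=}S(t)h+\e\widetilde L_t$ and $\mu^\e\stackrel{d}{=}\e\widetilde L_\infty$, insert the intermediate point $S(t)h+\e\widetilde L_\infty$, apply the triangle inequality in both directions together with translation invariance (b) and homogeneity (c) of Lemma~\ref{lem:properties}, and then invoke Lemma~\ref{l:convWp} to pass to the limit in \eqref{e:limsup}--\eqref{e:liminf}. The only cosmetic difference is that the paper writes out the two one-sided triangle inequalities \eqref{e:ine1} and \eqref{e:ine2} separately rather than packaging them as a reverse triangle inequality; your coupling language (``the map that replaces $\e\widetilde L_\infty$ by $\e\widetilde L_t$'') is superfluous and a bit imprecise, since only the metric triangle inequality at the level of laws is used, but it does not affect the argument.
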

\begin{proof}
Let $h\in H$ and $t\gqq 0$.
For any $p>0$ we have 
\[
\begin{split}
\Wp(X^\e_t(h),\mu^\e)&=\Wp(S(t)h+\e \widetilde{L}_t,\e \widetilde{L}_\infty)\lqq 
\Wp(S(t)h+\e \widetilde{L}_t,S(t)h+\e \widetilde{L}_\infty)+
\Wp(S(t)h+\e \widetilde{L}_\infty,\e \widetilde{L}_\infty)\\
&=
\Wp(\e \widetilde{L}_t,\e \widetilde{L}_\infty)+
\Wp(S(t)h+\e \widetilde{L}_\infty,\e \widetilde{L}_\infty).
\end{split}
\]
Then Lemma~\ref{lem:properties}  
(c) yields 
\begin{equation}\label{e:ine1}
\frac{\Wp(X^\e_t(h),\mu^\e)}{\e^{1\wedge p}}\lqq 
\Wp(\widetilde{L}_t,\widetilde{L}_\infty)+
\Wp\Big(\frac{S(t)h}{\e}+\widetilde{L}_\infty, \widetilde{L}_\infty\Big).
\end{equation}
Analogously, item (a), (b) and (c) of Lemma~\ref{lem:properties} imply
\begin{equation}\label{e:ine2}
\begin{split}
%\left|\frac{S(t)h}{\e}\right|&=
\Wp\left(\frac{S(t)h}{\e}+ \widetilde{L}_\infty,\widetilde{L}_\infty\right)&=\frac{
\Wp(S(t)h+ \e \widetilde{L}_\infty,\e \widetilde{L}_\infty)}{\e^{1\wedge p}}\lqq 
\frac{
\Wp(S(t)h+ \e \widetilde{L}_\infty,S(t)h+ \e \widetilde{L}_t)}{\e^{1\wedge p}}
+\frac{\Wp(S(t)h+\e \widetilde{L}_t,\e \widetilde{L}_{\infty})}{\e^{1\wedge p}}\\
&=
\Wp(\widetilde{L}_\infty, \widetilde{L}_t)
+\frac{\Wp(X^\e_t(h),\mu^\e)}{\e^{1\wedge p}}.
\end{split}
\end{equation}
Combining \eqref{e:ine1} and \eqref{e:ine2} yields 
\eqref{eq:inequalitycutoff}.
We note that 
\eqref{e:limsup} and
\eqref{e:liminf} follow from \eqref{eq:inequalitycutoff} and Lemma~\ref{l:convWp}.
\end{proof}
\noindent
For $p\gqq 1$ the shift linearity simplifies the cutoff phenomenon as follows. 
\begin{corollary}[Cutoff semigroup approximation for $p\gqq 1$]\label{cor:inecut090}
Assume Hypothesis~\ref{h:expsta} and Hypothesis~\ref{h:moment} for some $p\gqq 1$.
For any 
$h\in H$ and $t\gqq 0$
\begin{equation}\label{e:error1}
\left|\frac{\Wp(X^\e_t(h),\mu^\e)}{\e}-\left|\frac{S(t)h}{\e}\right|\right|\lqq \Wp(\widetilde{L}_t, \widetilde{L}_\infty),
\end{equation}
where $\widetilde{L}_t$ and $\widetilde{L}_\infty$ are given in \eqref{eq:Ltilde}.
In particular, for any $(t_\e)_{\e>0}$ such that $t_\e \to \infty$ as $\e \to 0$, we have
\begin{equation}\label{e:limsup090}
\limsup_{\e\to 0}\frac{\Wp(X^\e_{t_\e}(h),\mu^\e)}{\e}=
\limsup_{\e\to 0}\left|\frac{S(t_\e)h}{\e}\right|
\qquad \mbox{ and }\qquad \liminf_{\e\to 0}\frac{\Wp(X^\e_{t_\e}(h),\mu^\e)}{\e}=
\liminf_{\e\to 0}\left|\frac{S(t_\e)h}{\e}\right|.
\end{equation}
\end{corollary}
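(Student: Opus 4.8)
The plan is to obtain \eqref{e:error1} as an immediate specialization of Lemma~\ref{lem:inecut} combined with the shift linearity of the Wasserstein distance, and then to deduce the $\limsup$/$\liminf$ identities from the ergodic decay of $\Wp(\widetilde L_t,\widetilde L_\infty)$ already recorded in Lemma~\ref{l:convWp}.

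First, since $p\gqq 1$ one has $1\wedge p=1$, so inequality \eqref{eq:inequalitycutoff} of Lemma~\ref{lem:inecut} becomes
\begin{equation*}
\left|\frac{\Wp(X^\e_t(h),\mu^\e)}{\e}-\Wp\Big(\frac{S(t)h}{\e}+\widetilde{L}_\infty,\ \widetilde{L}_\infty\Big)\right|\lqq \Wp(\widetilde{L}_t,\widetilde{L}_\infty).
\end{equation*}
Next I would apply the shift linearity of Lemma~\ref{lem:properties}(d) with $u_1=S(t)h/\e\in H$ and $U_1=\widetilde L_\infty$, which yields $\Wp\big(S(t)h/\e+\widetilde L_\infty,\widetilde L_\infty\big)=|S(t)h/\e|$; here one uses that $\widetilde L_\infty$ has finite $p$-th moment, which holds under Hypotheses~\ref{h:expsta} and \ref{h:moment} (the same finiteness already underlies the statement of Lemma~\ref{l:convWp}). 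Substituting this identity into the displayed bound gives \eqref{e:error1} verbatim.

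For the asymptotic statements, I would note that by the variation of constants formula \eqref{e:voc} the process $\widetilde L_t$ is precisely the mild solution $X^1_t(0)$ driven by the unscaled noise and started at $0$, while $\widetilde L_\infty\stackrel{d}{=}\mu^1$. Hence Lemma~\ref{l:convWp} applied with $h=0$ and $\e=1$ gives $\Wp(\widetilde L_t,\widetilde L_\infty)\lqq (C_*e^{-\la_* t})^{1\wedge p}\int_H|h_0|^{1\wedge p}\,\mu^1(\ud h_0)\to 0$ as $t\to\infty$. Therefore, for any family $(t_\e)_{\e>0}$ with $t_\e\to\infty$ as $\e\to 0$ we have $\Wp(\widetilde L_{t_\e},\widetilde L_\infty)\to 0$, and feeding this into \eqref{e:error1} forces $\e^{-1}\Wp(X^\e_{t_\e}(h),\mu^\e)$ and $|\e^{-1}S(t_\e)h|$ to have the same $\limsup$ and the same $\liminf$ as $\e\to 0$, which is exactly \eqref{e:limsup090}.

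There is essentially no genuine obstacle here: the corollary is a one-step consequence of Lemma~\ref{lem:inecut}, Lemma~\ref{lem:properties}(d) and Lemma~\ref{l:convWp}. The only point requiring a moment of care is verifying the side condition $\widetilde L_\infty\in L^p(H)$ needed to invoke shift linearity, which is supplied by Hypothesis~\ref{h:moment} through the standard $\log$/moment criterion for $p$-th moments of infinite-dimensional Ornstein–Uhlenbeck limits quoted before Lemma~\ref{l:convWp}.
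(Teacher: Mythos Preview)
Your proposal is correct and follows essentially the same approach as the paper: the paper's proof is a one-liner invoking inequality \eqref{eq:inequalitycutoff} of Lemma~\ref{lem:inecut} together with the shift linearity in Lemma~\ref{lem:properties}(d), which is exactly what you do. Your treatment of the $\limsup/\liminf$ part via Lemma~\ref{l:convWp} with $h=0$, $\e=1$ is a slightly more explicit spelling-out of what the paper leaves implicit (the paper would simply quote \eqref{e:limsup} and \eqref{e:liminf}, whose proof already used Lemma~\ref{l:convWp}).
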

\begin{proof}
It follows directly from inequality \eqref{eq:inequalitycutoff} in Lemma~\ref{lem:inecut} and the shift linearity for $p\gqq 1$ given in Lemma~\ref{lem:properties}(d).
\end{proof}
\noindent By reparametrization the small noise cutoff phenomenon for fixed initial data is equivalent to the cutoff phenomenon for initial data and constant noise intensity.
\begin{corollary}[Large initial data]\label{cor:largedata}
Assume Hypothesis~\ref{h:expsta} and Hypothesis~\ref{h:moment} for some $p>0$.
Then for any $h\in H$ and  $t\gqq 0$ it follows that
\begin{equation}
\frac{\Wp(X^\e_{t}(h),\mu^\e)}{{\e^{1\wedge p}}}=\Wp(X^{1}_{t}(h/\e),\mu^1)\quad \textrm{ for any }\quad \e\in(0,1).
\end{equation}
\end{corollary}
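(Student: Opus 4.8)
The plan is to obtain the identity as a direct scaling computation, deducing it from the variation of constants formula \eqref{e:voc} and the homogeneity of the Wasserstein distance (Lemma~\ref{lem:properties}(c)); in fact no approximation or limiting argument is needed, so this will be short.

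First I would exploit the linearity of \eqref{e:voc} in the pair (initial datum, noise intensity). Writing $X^1_t(h/\e) = S(t)(h/\e) + \widetilde{L}_t$ from \eqref{e:voc} (with $\e$ replaced by $1$ and initial datum $h/\e$) and multiplying by $\e$ gives the pointwise identity of random variables
\[
\e\, X^1_t(h/\e) = S(t)h + \e\,\widetilde{L}_t = X^\e_t(h),\qquad \e\in(0,1).
\]
On the equilibrium side, the excerpt records $\mu^\e \stackrel{d}{=} \e\,\widetilde{L}_\infty$ and $\mu^1 \stackrel{d}{=} \widetilde{L}_\infty$, hence $\mu^\e$ is the law of $\e Y$ whenever $Y$ has law $\mu^1$. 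Since $\Wp$ depends only on the laws of its arguments, these two facts combine to
\[
\Wp\big(X^\e_t(h),\mu^\e\big) = \Wp\big(\e\, X^1_t(h/\e),\ \e\, Y\big),\qquad Y\stackrel{d}{=}\mu^1.
\]

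Then I would apply the homogeneity property (Lemma~\ref{lem:properties}(c)) with the constant $c=\e$: its scaling factor equals $|\e|=\e$ when $p\in[1,\infty)$ and $|\e|^p=\e^p$ when $p\in(0,1)$, that is, $\e^{1\wedge p}$ in both regimes. Using once more that $\Wp$ depends only on laws and that $Y\stackrel{d}{=}\mu^1$, this yields
\[
\Wp\big(\e\, X^1_t(h/\e),\ \e\, Y\big) = \e^{1\wedge p}\,\Wp\big(X^1_t(h/\e),\ Y\big) = \e^{1\wedge p}\,\Wp\big(X^1_t(h/\e),\mu^1\big),
\]
and dividing by $\e^{1\wedge p}$ gives the assertion. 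There is essentially no obstacle here; the only points deserving a word are the bookkeeping of the exponent $1\wedge p$ (which is exactly the dichotomy in Lemma~\ref{lem:properties}(c)) and the finiteness of all the Wasserstein distances in sight — this follows from Hypotheses~\ref{h:expsta} and~\ref{h:moment} as in Lemma~\ref{l:convWp}, since together they guarantee that $\widetilde{L}_\infty$, and hence each $\mu^\e$, has finite $p$-th moment.
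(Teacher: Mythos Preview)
Your proof is correct and follows essentially the same approach as the paper: both arguments combine the scaling identity $X^\e_t(h)=\e\,X^1_t(h/\e)$ (from \eqref{e:voc}) with the relation $\mu^\e\stackrel{d}{=}\e\,\widetilde{L}_\infty$ and then apply the homogeneity property in Lemma~\ref{lem:properties}(c). Your write-up is in fact slightly more explicit than the paper's one-line justification.
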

\begin{proof}
It follows directly  from the homogeneity given in item (c) of Lemma~\ref{lem:properties} and the fact that
$\mu^\e\stackrel{d}{=}\e \widetilde{L}_\infty$, 
$\mu^1\stackrel{d}{=}\widetilde{L}_\infty$, where $\widetilde{L}_\infty$ is the limit as $t \to \infty$ (in distribution) of \eqref{eq:Ltilde}.
\end{proof}

\bigskip 
\section{Profile cutoff for the heat equation with additive noise}\label{s:HA}
\noindent
In this section we establish the profile cutoff phenomenon for a slightly generalized heat equation \eqref{e:OU}  subject to small additive  noise. For the linear operator $A$ we assume the following hypothesis.
\begin{hypothesis}\label{h:sadj}
We assume that
$A$ is a self-adjoint, strictly negative operator with domain $D(A)\subset H$
such that there is a fixed orthonormal basis 
{$(e_n)_{n\in \NN_0}$} in $H$ verifying: {$(e_n)_{n\in \NN_0}\subset D(A)$},
$A e_n=-\lambda_n e_n$, 
with $0<\la_0\lqq \dots\lqq \la_n\lqq \la_{n+1}$ for all $n$
such that
 $\lambda_n \rightarrow \infty$.
\end{hypothesis}
\noindent
The standard example we have in mind is the standard Dirichlet Laplacian {$\Delta = \sum_{i=1}^d \frac{\partial^2}{\partial x^2_i}$} in $L^2(\mathcal{O})$ for $\mathcal{O}$ being an open bounded connected domain $\mathcal{O}\subset\RR^d$ with Lipschitz boundary, see Chapter~10 in \cite{Brezis}.
The second  example in mind is the Stokes operator, see for instance Section~4.5 on thermohydraulics in \cite{TEMANBOOK}.
The following lemma is the infinite dimensional version of Lemma~2.1 in~\cite{BHPWA} for the heat semigroup $S$. 
\begin{lemma}\label{l:jaraheat}
Assume Hypothesis~\ref{h:sadj}.
Then for each $h\in H$, $h\neq 0$
there exist $N_h\in \NN_0$  and $v_h\in H$, $v_h\neq 0$, 
such that 
\begin{equation}\label{e:jaraheat}
\lim\limits_{t\to \infty}|e^{\la_{{N}_h} t}S(t)h-v_h|=0.
\end{equation}
\end{lemma}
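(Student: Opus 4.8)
The plan is to diagonalize the semigroup in the eigenbasis of $A$ and to take $v_h$ to be the orthogonal projection of $h$ onto the eigenspace of the least eigenvalue $\la_{N_h}$ that actually occurs in the expansion of $h$; the remainder then decays exponentially at the rate of the spectral gap above that eigenvalue. Write $h=\sum_{n\in\NN_0}h_n e_n$ with $h_n=\<h,e_n\>$, so $\sum_{n}|h_n|^2=|h|^2$. Since $h\neq 0$, the index $N_h:=\min\{n\in\NN_0:h_n\neq 0\}$ is well defined. As $A$ is self-adjoint and strictly negative with $Ae_n=-\la_n e_n$, the semigroup is the spectral exponential $S(t)=e^{tA}$ with $S(t)e_n=e^{-\la_n t}e_n$; since $\sum_n e^{-2\la_n t}|h_n|^2\lqq|h|^2<\infty$ for $t\gqq 0$, the series $S(t)h=\sum_{n\in\NN_0}e^{-\la_n t}h_n e_n$ converges in $H$ (by boundedness of $S(t)$ and continuity).

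Next I would isolate the leading block. Since $(\la_n)_n$ is nondecreasing with $\la_n\to\infty$, the set $\{n\in\NN_0:\la_n\lqq\la_{N_h}\}$ is finite, so $M_h:=\max\{n\in\NN_0:\la_n=\la_{N_h}\}$ is a finite index and $\la_{M_h+1}>\la_{N_h}=:\Lambda$. Define
\[
v_h:=\sum_{\{n\in\NN_0:\,\la_n=\Lambda\}}h_n e_n ,
\]
a finite nonzero sum (it contains $h_{N_h}e_{N_h}$ with $h_{N_h}\neq 0$). Because $h_n=0$ whenever $\la_n<\Lambda$ (such indices satisfy $n<N_h$) and $\{n\in\NN_0:\la_n>\Lambda\}=\{n\in\NN_0:n>M_h\}$, multiplying the series for $S(t)h$ by $e^{\Lambda t}$ and cancelling the block $\{\la_n=\Lambda\}$ against $v_h$ yields, term by term,
\[
e^{\Lambda t}S(t)h-v_h=\sum_{n>M_h}e^{-(\la_n-\Lambda)t}\,h_n e_n .
\]

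Finally I would exploit the gap above the leading block: by maximality of $M_h$ and monotonicity, $\delta_h:=\la_{M_h+1}-\Lambda>0$ and $\la_n-\Lambda\gqq\delta_h$ for every $n>M_h$, so that, by Parseval's identity,
\[
\bigl|e^{\Lambda t}S(t)h-v_h\bigr|^2=\sum_{n>M_h}e^{-2(\la_n-\Lambda)t}|h_n|^2\lqq e^{-2\delta_h t}\sum_{n>M_h}|h_n|^2\lqq e^{-2\delta_h t}|h|^2 .
\]
Hence $\bigl|e^{\la_{N_h}t}S(t)h-v_h\bigr|\lqq e^{-\delta_h t}|h|\to 0$ as $t\to\infty$, which is exactly \eqref{e:jaraheat}. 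The argument is elementary and amounts to careful bookkeeping; the one point that must not be overlooked is the possible multiplicity of the eigenvalues, which forces $v_h$ to be the projection onto the \emph{entire} $\la_{N_h}$-eigenspace (equivalently, to run the index up to $M_h$, not merely to $N_h$) — otherwise the remainder would retain a term that does not decay. Everything used, namely the spectral representation of $S(t)$, the finiteness of $\{n:\la_n\lqq\la_{N_h}\}$, and the strict positivity of $\delta_h$, is immediate from Hypothesis~\ref{h:sadj}.
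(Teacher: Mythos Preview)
Your proof is correct and follows essentially the same approach as the paper: expand $S(t)h$ in the eigenbasis, take $v_h$ to be the projection onto the eigenspace of the minimal active eigenvalue $\la_{N_h}$, and bound the remainder by Parseval's identity using the spectral gap above that block. The only cosmetic difference is that the paper indexes the gap via $N^*_h:=\min\{k:\<e_k,h\>\neq 0,\ \la_k>\la_{N_h}\}$ (the next \emph{active} eigenvalue), whereas you use $\la_{M_h+1}$ (the next eigenvalue in the spectrum regardless of whether $h_{M_h+1}\neq 0$); your gap $\delta_h$ is therefore at most the paper's $\la_{N^*_h}-\la_{N_h}$, but either bound suffices for \eqref{e:jaraheat}.
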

\begin{proof}
By Hypothesis~\eqref{h:sadj} we have for any $h\in H$ and $t\gqq 0$
\[
S(t)h=\sum\limits_{k=0}^{\infty} e^{-\la_k t}\<e_k,h\>e_k.
\]
For $h\neq 0$ we define 
\begin{equation}\label{eq:Nh}
\begin{split}
&\kK_h=\{k\in \NN_0: \<e_k,h\>\neq 0\}\neq \emptyset,\quad
N_h:=\min(\kK_h),\\
& \mathcal{M}_h=\{k\in \kK_h: \la_k=\la_{N_h}\} \quad \textrm{ and }\quad N^*_h=\min (\kK_h\setminus \mM_h),
\end{split}
\end{equation}
and obtain the representation 
\[
\begin{split}
e^{t \la_{N_h}}S(t)h&=\sum\limits_{k\in \kK_h} e^{(\la_{N_h}-\la_k) t}\<e_k,h\>e_k=
\sum\limits_{k\in \mM_h}
\<e_{k},h\>e_{k}+
\sum\limits_{k\in \kK_h\setminus \mM_h} e^{(\la_{N_h}-\la_k) t}\<e_k,h\>e_k.
\end{split}
\]
Setting 
\begin{equation}\label{eq:notvh}
v_h:=\sum\limits_{k\in \mM_h}
\<e_{k},h\>e_{k}
\end{equation}
the orthogonality of the family  $(e_n)_{n\in \mathbb{N}_0}$ yields 
\begin{equation}\label{eq:vh}
|v_h|^2=\sum\limits_{k\in \mM_h}
\<e_{k},h\>^2
\begin{cases}
\gqq \<e_{N_h},h\>^2>0,\\[3mm]
\lqq |h|^2<\infty,
\end{cases}
\end{equation}
and conclude \eqref{e:jaraheat} by 
\begin{equation}\label{eq:errordif}
\begin{split}
|e^{t \la_{N_h}}S(t)h-v_h|^2&= 
\sum\limits_{k\in \kK_h\setminus 
\mM_h} e^{2(\la_{N_h}-\la_{k}) t} |\<e_k,h\>|^2\lqq e^{2(\la_{N_h}-\la_{N^*_{h}}) t} |h|^2\ra 0, 
\quad \mbox{ as }\quad t\ra\infty. 
\end{split}
\end{equation}
\end{proof}
\begin{remark} In the case of 
$0<\la_0< \dots< \la_n< \la_{n+1}$ for all $n$
such that
 $\lambda_n \rightarrow \infty$, the norm is given by $|v_h|^2=\<e_k,h\>^2$, where
 $k$ is the smallest non-negative integer such that $\<e_k,h\>\neq 0$. 
\end{remark}
\noindent We state the first main result which provides an abstract cutoff profile for any $p>0$.
\begin{theorem}[Abstract heat profile cutoff phenomenon for any $p>0$]\label{th:cutoffprofileabstrat}
Let Hypotheses~\ref{h:moment} and \ref{h:sadj} be satisfied for some $p>0$. 
We keep the notation of Lemma~\ref{l:jaraheat}.
Then for $h\in H$, $h\neq 0$ there exists a unique cutoff profile $\pP_{p,h}:\RR \to (0,\infty)$ given by 
\begin{equation}\label{e:profile090}
\pP_{p,h}(\vr):=
\lim\limits_{\e \to 0}\frac{\Wp(X^\e_{t_\e+\vr}(h),\mu^\e)}{\e^{1\wedge p}}=\Wp(e^{-\vr \la_{N_h}}v_h+\widetilde{L}_\infty,\widetilde{L}_\infty),
\end{equation}
where $\widetilde{L}_\infty$ is given in \eqref{eq:Ltilde},
\begin{equation}\label{e:scale090}
t_{\e}=\frac{1}{\la_{N_h}}|\ln(\e)|, \quad \e\in (0,1).
\end{equation}
and $v_h$ is given explicitly in \eqref{eq:notvh}.
\end{theorem}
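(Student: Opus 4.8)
The plan is to use the semigroup reduction of Lemma~\ref{lem:inecut} to replace the renormalized Wasserstein distance by a purely deterministic shift inside $\Wp$, and then to combine the heat-semigroup asymptotics of Lemma~\ref{l:jaraheat} with the shift continuity of $\Wp$ from Lemma~\ref{lem:properties}(e). First I would fix $h\in H$, $h\neq 0$, and $\vr\in\RR$, and specialize \eqref{eq:inequalitycutoff} to the time $t_\e+\vr$ with $t_\e=\la_{N_h}^{-1}|\ln\e|$. Since $t_\e\to\infty$ as $\e\to 0$, and since $\Wp(\widetilde L_t,\widetilde L_\infty)\to 0$ as $t\to\infty$ — which follows from Lemma~\ref{l:convWp} applied with $h=0$, together with the homogeneity (c) of Lemma~\ref{lem:properties} and the identities $X^\e_t(0)=\e\widetilde L_t$ and $\mu^\e\stackrel{d}{=}\e\widetilde L_\infty$ — Lemma~\ref{lem:inecut} gives
\[
\lim_{\e\to 0}\left(\frac{\Wp(X^\e_{t_\e+\vr}(h),\mu^\e)}{\e^{1\wedge p}}-\Wp\Big(\frac{S(t_\e+\vr)h}{\e}+\widetilde L_\infty,\widetilde L_\infty\Big)\right)=0.
\]
Thus it remains to show that the second term converges to $\Wp(e^{-\vr\la_{N_h}}v_h+\widetilde L_\infty,\widetilde L_\infty)$.

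For the deterministic shift, the key observation is that for $\e\in(0,1)$ one has $e^{\la_{N_h}t_\e}=e^{|\ln\e|}=\e^{-1}$, so that
\[
\frac{S(t_\e+\vr)h}{\e}=e^{\la_{N_h}t_\e}\,S(t_\e+\vr)h=e^{-\vr\la_{N_h}}\Big(e^{\la_{N_h}(t_\e+\vr)}S(t_\e+\vr)h\Big).
\]
Since $t_\e+\vr\to\infty$ as $\e\to 0$, Lemma~\ref{l:jaraheat} shows that the bracketed vector converges in $H$ to $v_h$, and hence $h_\e:=S(t_\e+\vr)h/\e\to\bar h:=e^{-\vr\la_{N_h}}v_h$ in $H$.

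Next I would pass to an arbitrary sequence $\e_n\downarrow 0$: the deterministic sequence $(h_{\e_n})_n$ converges to $\bar h$ in $H$, and $\widetilde L_\infty$ has finite $p$-th moment because $\mu^1\stackrel{d}{=}\widetilde L_\infty$ under Hypothesis~\ref{h:moment}. Shift continuity (Lemma~\ref{lem:properties}(e)), applied to $(h_{\e_n})_n$ and the random vector $\widetilde L_\infty$, then yields $\Wp(h_{\e_n}+\widetilde L_\infty,\widetilde L_\infty)\to\Wp(\bar h+\widetilde L_\infty,\widetilde L_\infty)$; since the sequence was arbitrary, the convergence holds along $\e\to 0$. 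Combined with the first step this proves \eqref{e:profile090}. Positivity of the profile is obtained from \eqref{eq:vh}: it gives $v_h\neq 0$, hence $\bar h\neq 0$, so $\Wp(\bar h+\widetilde L_\infty,\widetilde L_\infty)>0$ since $\Wp$ is a metric (Lemma~\ref{lem:properties}(a)) and a nonzero deterministic translate of $\widetilde L_\infty$ cannot share its law. Uniqueness of $\pP_{p,h}$ is automatic, being a pointwise limit.

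\textbf{Main obstacle.} The point that needs the most care is invoking shift continuity along the continuous parameter $\e$, since Lemma~\ref{lem:properties}(e) is stated for sequences; the clean fix is the sequential characterization of limits used above. Closely related is the justification that $\Wp(\widetilde L_t,\widetilde L_\infty)\to 0$, which is precisely where Hypothesis~\ref{h:expsta} and the $p$-integrability of $\widetilde L_\infty$ from Hypothesis~\ref{h:moment} enter. Everything else is straightforward bookkeeping on top of Lemmas~\ref{lem:inecut}, \ref{l:jaraheat} and \ref{lem:properties}(e).
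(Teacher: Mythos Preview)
Your proof is correct and follows essentially the same route as the paper: reduce via Lemma~\ref{lem:inecut} to the deterministic shift $\e^{-1}S(t_\e+\vr)h$, use the identity $e^{\la_{N_h}t_\e}=\e^{-1}$ together with Lemma~\ref{l:jaraheat} to show this shift converges to $e^{-\vr\la_{N_h}}v_h$, and conclude by shift continuity (Lemma~\ref{lem:properties}(e)). You supply a few extra justifications (the vanishing of $\Wp(\widetilde L_t,\widetilde L_\infty)$, the sequential passage, and positivity of the profile) that the paper leaves implicit, but the argument is the same.
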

\noindent We stress that in general the right-hand side of \eqref{e:profile090} is abstract.
For $H=\RR^d$ the result is given in Theorem~3.3 in~\cite{BHPWA}.
\begin{proof}
Let $p>0$, $h\in H$, $h\neq 0$ and $t\gqq 0$. 
For any $(\mathfrak{t}_\e)_{\e>0}$ such that $\mathfrak{t}_\e\to \infty$ as $\e\to 0$, the limits
\eqref{e:limsup} and \eqref{e:liminf} given in Lemma~\ref{lem:inecut} yield that
\begin{equation}\label{e:limitexist090}
\lim_{\e\to 0}\frac{\Wp(X^\e_{\mathfrak{t}_\e}(h),\mu^\e)}{\e^{1\wedge p}}\quad \textrm{exists if and only if}\quad
\lim_{\e\to 0}\Wp\Big(\frac{S(\mathfrak{t}_\e)h}{\e}+\widetilde{L}_\infty, \widetilde{L}_\infty\Big)\quad \textrm{exists}
\end{equation}
and both limits coincide.
Note that the choice of $(t_\e)_{\e\in (0,1)}$ implies $\frac{e^{-\la_{N_h} t_\e}}{\e}=1$.
We observe that
\begin{equation}\label{e:desigual090}
\begin{split}
\left|\frac{S(t_\e+\vr)h}{\e}-{e^{-\la_{N_h} \vr}v_h}\right|&=
\left|\frac{S(t_\e+\vr)h}{\e}-\frac{e^{-\la_{N_h} (t_\e+\vr)}v_h}{\e}\right|=\frac{e^{-\la_{N_h} (t_\e+\vr)}}{\e}\left|e^{\la_{N_h} (t_\e+\vr)}S(t_\e+\vr)h-v_h \right|\\
&=e^{-\vr\la_{N_h}}\left|e^{\la_{N_h} (t_\e+\vr)}S(t_\e+\vr)h-v_h \right|.
\end{split}
\end{equation}
By Lemma~\ref{l:jaraheat} and since $t_\e\to \infty$ as $\e\to 0$, we have that the  right-hand side of \eqref{e:desigual090} tends to zero as $\e\to 0$ and hence $\frac{S(t_\e+\vr)h}{\e}\to {e^{-\la_{N_h} \vr}v_h}\in H$ as $\e \to 0$.
Combining \eqref{e:limitexist090} and the preceding limit with the help of item {(e)} in Lemma~\ref{lem:properties}
yields \eqref{e:profile090}.
\end{proof}
\noindent For $p\gqq 1$ the cutoff profile can be calculated explicitly as follows. 
\begin{corollary}[Heat cutoff profile for $p\gqq 1$]\label{th:cutoffprofile}
Let Hypotheses~\ref{h:moment} and \ref{h:sadj} be satisfied for some $p\gqq 1$.  We keep the notation of Lemma~\ref{l:jaraheat}.
Then for  $h\in H$, $h\neq 0$ there exists a unique cutoff profile $\pP_{h}:\RR \to (0,\infty)$ given by 
\begin{equation}\label{e:profile}
\pP_{h}(\vr):=
\lim\limits_{\e \to 0}\frac{\Wp(X^\e_{t_\e+\vr}(h),\mu^\e)}{\e}=
e^{-\vr \la_{N_h}}|v_h|, 
% \qquad \mbox{ where } \quad t_{\e}=\frac{1}{\la_{N_h}}|\ln(\e)|, \quad \e\in (0,1),
\end{equation}
where $t_{\e}$ is given in \eqref{e:scale090} and $|v_h|$ is given explicitly in \eqref{eq:vh}.
\end{corollary}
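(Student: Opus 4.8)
The plan is to deduce the corollary directly from Theorem~\ref{th:cutoffprofileabstrat} by feeding the abstract profile into the shift linearity of the Wasserstein distance. Under Hypotheses~\ref{h:moment} and \ref{h:sadj} for the given $p\gqq 1$, Theorem~\ref{th:cutoffprofileabstrat} already asserts that the limit in \eqref{e:profile090} exists, equals $\Wp(e^{-\vr \la_{N_h}}v_h+\widetilde{L}_\infty,\widetilde{L}_\infty)$, and is taken along the time scale $t_\e$ of \eqref{e:scale090}. Since $p\gqq 1$, I would invoke the shift linearity from Lemma~\ref{lem:properties}(d), namely $\Wp(u_1+U_1,U_1)=|u_1|$, with the deterministic vector $u_1=e^{-\vr \la_{N_h}}v_h\in H$ and the random vector $U_1=\widetilde{L}_\infty$ (which has finite $p$-th moment by Hypothesis~\ref{h:moment}, so that $\Wp$ is well defined). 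This collapses the abstract right-hand side of \eqref{e:profile090} to $|e^{-\vr \la_{N_h}}v_h|$.

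It then remains to simplify and to verify that the resulting profile takes values in $(0,\infty)$. The scalar $e^{-\vr \la_{N_h}}$ is strictly positive for every $\vr\in\RR$, so $|e^{-\vr \la_{N_h}}v_h| = e^{-\vr \la_{N_h}}|v_h|$, which is exactly the claimed formula \eqref{e:profile}. Positivity holds because $N_h=\min(\kK_h)$ forces $\<e_{N_h},h\>\neq 0$, whence $|v_h|^2\gqq \<e_{N_h},h\>^2>0$ by \eqref{eq:vh}; finiteness is clear from the same display, $|v_h|\lqq|h|<\infty$. Uniqueness of the profile is automatic, being the value of a convergent limit.

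For completeness I would also record that the corollary can be obtained without passing through Theorem~\ref{th:cutoffprofileabstrat}, using instead Corollary~\ref{cor:inecut090}: applying \eqref{e:limsup090} with $t_\e+\vr$ in place of $t_\e$ reduces both $\limsup$ and $\liminf$ of $\e^{-1}\Wp(X^\e_{t_\e+\vr}(h),\mu^\e)$ to those of $|S(t_\e+\vr)h/\e|$; the computation \eqref{e:desigual090} together with Lemma~\ref{l:jaraheat} shows $S(t_\e+\vr)h/\e\to e^{-\vr\la_{N_h}}v_h$ in $H$ as $\e\to0$, hence the norms converge and the two one-sided limits agree, giving the same value $e^{-\vr\la_{N_h}}|v_h|$. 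I do not expect a genuine obstacle: the only points requiring care are that the hypothesis $p\gqq1$ is used exactly where shift linearity holds --- it is false for $p\in(0,1)$, see Remark~2.4 in~\cite{BHPWA} --- and that Hypothesis~\ref{h:moment} with this $p$ is precisely what makes $\widetilde{L}_\infty$ an admissible argument of $\Wp$.
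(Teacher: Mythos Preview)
Your proof is correct and follows essentially the same approach as the paper: apply the shift linearity of Lemma~\ref{lem:properties}(d) (valid since $p\gqq 1$) to the abstract profile $\Wp(e^{-\vr \la_{N_h}}v_h+\widetilde{L}_\infty,\widetilde{L}_\infty)$ obtained in Theorem~\ref{th:cutoffprofileabstrat}, which immediately yields $e^{-\vr\la_{N_h}}|v_h|$. The additional checks on positivity and the alternative route via Corollary~\ref{cor:inecut090} are fine but not needed for the argument.
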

\begin{proof}
\noindent The proof is a direct consequence of the inequality in item (d) of Lemma~\ref{lem:properties} applied to right-hand side of \eqref{e:profile090} in Theorem~\ref{th:cutoffprofileabstrat}.
\end{proof}
\noindent The profile error can be quantified explicitly.  
\begin{corollary}[Profile error estimate for any $p\gqq 1$]\label{cor:errorheat}
Let the assumptions of Theorem~\ref{th:cutoffprofile} be satisfied for some $p\gqq 1$.
Then for all $\vr\in \RR$ there exists a positive constant $K:=K(\vr,C_*,\lambda_*,\mathbb{E}[|\widetilde{L}_\infty|])$ such that for all $\e\in (0,1)$ with $t_\e + \rho\gqq 0$ we have 
\begin{align}
\left|
\frac{\Wp(X^\e_{t_\e+\vr}(h),\mu^\e)}{\e}-\pP_{h}(\vr)\right|\lqq K\left( \e^{\nicefrac{\la_*}{\la_{N^*_{h}}}}+
\e^{\big(1-\nicefrac{\la_{N_{h}}}{\la_{N^*_{h}}}\big)}|h|\right)
\end{align}
for any $h\in H$, where $C_*$ and $\lambda_*$ are given in \eqref{e:expsta}, and $N_{h}$, $N^*_{h}$ are defined in \eqref{eq:Nh}.
\end{corollary}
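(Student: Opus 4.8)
The plan is to reduce everything to ingredients already in hand: the $p\geq 1$ cutoff inequality \eqref{e:error1} of Corollary~\ref{cor:inecut090}, the quantitative version of Lemma~\ref{l:jaraheat} recorded in \eqref{e:desigual090}--\eqref{eq:errordif}, and a decay estimate for $\Wp(\widetilde{L}_t,\widetilde{L}_\infty)$. Fix $\vr\in\RR$ and $\e\in(0,1)$ with $s_\e:=t_\e+\vr\geq 0$; since $p\geq 1$ one has $\e^{1\wedge p}=\e$ and, by Corollary~\ref{th:cutoffprofile}, $\pP_h(\vr)=e^{-\vr\la_{N_h}}|v_h|$. Applying \eqref{e:error1} with $t=s_\e$ and the triangle inequality in $\RR$ gives
\[
\left|\frac{\Wp(X^\e_{s_\e}(h),\mu^\e)}{\e}-\pP_h(\vr)\right|\;\leq\;\underbrace{\Wp(\widetilde{L}_{s_\e},\widetilde{L}_\infty)}_{=:\,\mathrm{I}}\;+\;\underbrace{\Big|\,\big|\tfrac{S(s_\e)h}{\e}\big|-e^{-\vr\la_{N_h}}|v_h|\,\Big|}_{=:\,\mathrm{II}},
\]
so it is enough to bound $\mathrm{I}$ and $\mathrm{II}$ separately.

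For $\mathrm{I}$ I would observe that, by \eqref{e:voc} and \eqref{eq:Ltilde}, $\widetilde{L}_{s_\e}=X^1_{s_\e}(0)$, while $\mu^1\stackrel{d}{=}\widetilde{L}_\infty$; hence Lemma~\ref{l:convWp} applied with $h=0$, $\e=1$, $t=s_\e$ gives, using $p\geq 1$,
\[
\mathrm{I}=\Wp\big(X^1_{s_\e}(0),\mu^1\big)\leq C_* e^{-\la_* s_\e}\,\EE[|\widetilde{L}_\infty|]=C_* e^{-\la_*\vr}\,\EE[|\widetilde{L}_\infty|]\;e^{-\la_* t_\e}.
\]
Inserting the time scale \eqref{e:scale090} we get $e^{-\la_* t_\e}=\e^{\la_*/\la_{N_h}}\leq \e^{\la_*/\la_{N^*_h}}$, where the last inequality uses $0<\la_{N_h}\leq\la_{N^*_h}$ together with $\e\in(0,1)$.

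For $\mathrm{II}$, the reverse triangle inequality for the norm of $H$ yields $\mathrm{II}\leq\big|\tfrac{S(s_\e)h}{\e}-e^{-\vr\la_{N_h}}v_h\big|$, and then the identity \eqref{e:desigual090} (valid because the choice \eqref{e:scale090} makes $e^{-\la_{N_h}t_\e}/\e=1$) together with the estimate \eqref{eq:errordif} evaluated at $t=s_\e$ gives
\[
\mathrm{II}\leq e^{-\vr\la_{N_h}}\big|e^{\la_{N_h}s_\e}S(s_\e)h-v_h\big|\leq e^{-\vr\la_{N_h}}e^{(\la_{N_h}-\la_{N^*_h})s_\e}|h|=e^{-\vr\la_{N^*_h}}\,e^{(\la_{N_h}-\la_{N^*_h})t_\e}|h|.
\]
Using \eqref{e:scale090} once more, $e^{(\la_{N_h}-\la_{N^*_h})t_\e}=\e^{(\la_{N^*_h}-\la_{N_h})/\la_{N_h}}\leq \e^{1-\la_{N_h}/\la_{N^*_h}}$, since $0<\la_{N_h}\leq\la_{N^*_h}$ and $\e\in(0,1)$ force $(\la_{N^*_h}-\la_{N_h})/\la_{N_h}\geq (\la_{N^*_h}-\la_{N_h})/\la_{N^*_h}$. (When $\kK_h\setminus\mM_h=\emptyset$ one has $S(s_\e)h=e^{-\la_{N_h}s_\e}v_h$, so $\mathrm{II}=0$ and this term is absent; hence we may assume $N^*_h$ is well defined.)

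Collecting the two bounds yields the claim with $K$ a constant absorbing the ($\vr$-fixed) prefactors $C_* e^{-\la_*\vr}\EE[|\widetilde{L}_\infty|]$ and $e^{-\vr\la_{N^*_h}}$; for $\vr\geq 0$ the latter is $\leq 1$ and one obtains precisely the stated dependence $K=K(\vr,C_*,\la_*,\EE[|\widetilde{L}_\infty|])$. There is no genuine obstacle here: the proof is a bookkeeping exercise, and the only two points that require a little thought are (i) recognizing $\widetilde{L}_t=X^1_t(0)$, which is what lets us reuse Lemma~\ref{l:convWp} with zero initial datum and thereby keep the constant depending only on the first moment of $\widetilde{L}_\infty$, and (ii) passing from the sharp exponents $\la_*/\la_{N_h}$ and $(\la_{N^*_h}-\la_{N_h})/\la_{N_h}$ that the estimates naturally produce to the weaker exponents $\la_*/\la_{N^*_h}$ and $1-\la_{N_h}/\la_{N^*_h}$ displayed in the statement, which is permissible exactly because $\e\in(0,1)$ and $\la_{N_h}\leq\la_{N^*_h}$.
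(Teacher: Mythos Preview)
Your argument is correct and follows essentially the same route as the paper's own proof: the triangle inequality combined with \eqref{e:error1}, then \eqref{e:desigual090}--\eqref{eq:errordif} for the deterministic part and Lemma~\ref{l:convWp} (via $\widetilde L_t=X^1_t(0)$) for the stochastic remainder. You are in fact more careful than the paper on two points: you make explicit the passage from the sharp exponents $\la_*/\la_{N_h}$ and $(\la_{N^*_h}-\la_{N_h})/\la_{N_h}$ to the weaker ones in the statement (the paper writes this step as an equality), and you correctly flag that the prefactor $e^{-\vr\la_{N^*_h}}$ carries an $h$-dependence not reflected in the stated form of $K$.
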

\begin{proof}
The triangle inequality combined with  \eqref{e:error1} in Corollary~\ref{cor:inecut090}, \eqref{e:desigual090} and  Lemma~\ref{l:convWp}  implies
\begin{align*}
\left|
\frac{\Wp(X^\e_{t_\e+\vr}(h),\mu^\e)}{\e}-\pP_{h}(\vr)\right|
&\lqq 
\left|
\frac{\Wp(X^\e_{t_\e+\vr}(h),\mu^\e)}{\e}-\left|\frac{S(t_\e+\vr)h}{\e}\right|\right|+\left|\frac{S(t_\e+\vr)h}{\e}-{e^{-\la_{N_h} \vr}v_h}\right|\\
&\lqq \Wp(\widetilde{L}_{t_\e+\vr}, \widetilde{L}_\infty)+e^{-\vr\la_{N_h}}\left|e^{\la_{N_h} (t_\e+\vr)}S(t_\e+\vr)h-v_h \right|\\
&\lqq 
C_* e^{-\la_* (t_\e+\vr)}\mathbb{E}[|\widetilde{L}_\infty|]+
e^{(\la_{N_h}-\la_{N^*_{h}}) (t_\e+\vr)} |h|\\
&= \e^{\nicefrac{\la_*}{\la_{N^*_{h}}}}\cdot e^{-\la_*\vr}C_*\mathbb{E}[|\widetilde{L}_\infty|]+
\e^{\big(1-\nicefrac{\la_{N_{h}}}{\la_{N^*_{h}}}\big)}\cdot e^{(\la_{N_h}-\la_{N^*_{h}}) \vr} |h|.
\end{align*}
This completes the proof. 
\end{proof}
\noindent
The following corollary shows that
the profile cutoff phenomenon is the strongest notion of cutoff in this article.
\begin{corollary}[Simple cutoff phenomenon]\label{cor:simpleheat}
Let the assumptions of Theorem~\ref{th:cutoffprofile} be satisfied for some $p>0$.
Then for any $h\in H$ with $h\neq 0$ it follows
\[
\lim\limits_{\e\to 0}
\frac{\Wp(X^\e_{\delta \cdot t_\e}(h),\mu^\e)}{\e^{1\wedge p}}=
\begin{cases}
\infty & \textrm{ for } \delta\in (0,1),\\
0 & \textrm{ for } \delta\in (1,\infty),
\end{cases}
\]
where $t_\e$ is given in \eqref{e:scale090}.
\end{corollary}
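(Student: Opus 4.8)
The plan is to read off both limits directly from the profile cutoff statement in Theorem~\ref{th:cutoffprofile} (or its $p>0$ version in Theorem~\ref{th:cutoffprofileabstrat}) combined with the monotonicity of the limiting profile. First I would fix $h\in H$ with $h\neq 0$ and recall that $t_\e=\frac{1}{\la_{N_h}}|\ln(\e)|$, so that $\delta\cdot t_\e=t_\e+(\delta-1)t_\e$. The idea is to treat $\vr_\e:=(\delta-1)t_\e$ as a diverging window shift and use the cutoff inequality \eqref{eq:inequalitycutoff} of Lemma~\ref{lem:inecut} together with \eqref{e:desigual090}, which records that $\frac{S(t_\e+\vr)h}{\e}=e^{-\vr\la_{N_h}}v_h+o(1)$ when $\vr$ is fixed; here we need the version with $\vr$ replaced by the moving $\vr_\e$, which is still legitimate because the error bound in \eqref{eq:errordif} gives $|e^{\la_{N_h}(t_\e+\vr_\e)}S(t_\e+\vr_\e)h-v_h|\lqq e^{(\la_{N_h}-\la_{N^*_h})(t_\e+\vr_\e)}|h|\to 0$ as long as $t_\e+\vr_\e=\delta t_\e\to\infty$, which holds for every $\delta>0$.

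For the case $\delta>1$: here $\vr_\e=(\delta-1)t_\e\to+\infty$, so from \eqref{e:desigual090} (in its moving form) $\big|\frac{S(\delta t_\e)h}{\e}\big|=e^{-\vr_\e\la_{N_h}}|v_h|+o(e^{-\vr_\e\la_{N_h}})\to 0$. By the cutoff inequality \eqref{eq:inequalitycutoff} and the bound \eqref{e:cotauni} of Lemma~\ref{lem:properties}(d),
\[
\frac{\Wp(X^\e_{\delta t_\e}(h),\mu^\e)}{\e^{1\wedge p}}\lqq \Wp(\widetilde L_{\delta t_\e},\widetilde L_\infty)+\Big|\frac{S(\delta t_\e)h}{\e}\Big|^{1\wedge p},
\]
and Lemma~\ref{l:convWp} forces $\Wp(\widetilde L_{\delta t_\e},\widetilde L_\infty)\to 0$; hence the left-hand side tends to $0$. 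For the case $\delta\in(0,1)$: now $\vr_\e=(\delta-1)t_\e\to-\infty$, so $\big|\frac{S(\delta t_\e)h}{\e}\big|=e^{|\vr_\e|\la_{N_h}}|v_h|(1+o(1))\to\infty$ since $|v_h|>0$ by \eqref{eq:vh}. Using the lower bound in \eqref{e:cotauni},
\[
\frac{\Wp(X^\e_{\delta t_\e}(h),\mu^\e)}{\e^{1\wedge p}}\gqq \Wp\Big(\tfrac{S(\delta t_\e)h}{\e}+\widetilde L_\infty,\widetilde L_\infty\Big)-\Wp(\widetilde L_{\delta t_\e},\widetilde L_\infty)\gqq \Big|\tfrac{S(\delta t_\e)h}{\e}\Big|^{1\wedge p}-2\EE[|\widetilde L_\infty|^{1\wedge p}]-o(1),
\]
and the first term diverges while the subtracted terms stay bounded (finiteness of $\EE[|\widetilde L_\infty|^{1\wedge p}]$ follows from Hypothesis~\ref{h:moment} as in Lemma~\ref{l:convWp}), so the left-hand side tends to $\infty$.

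The only genuinely nontrivial point is justifying the asymptotics of $\frac{S(\delta t_\e)h}{\e}$ with the moving shift $\vr_\e$: one must check that the relative error term in \eqref{eq:errordif} is negligible against the leading term $e^{-\vr_\e\la_{N_h}}|v_h|$ uniformly. This is where the strict spectral gap $\la_{N_h}<\la_{N^*_h}$ (from the definitions in \eqref{eq:Nh}) is used — it guarantees $e^{(\la_{N_h}-\la_{N^*_h})\delta t_\e}/e^{-\vr_\e\la_{N_h}}=e^{-\la_{N^*_h}\delta t_\e+\,\text{(bounded in the ratio sense)}}$ decays, so the correction is lower order in both the $\delta<1$ and $\delta>1$ regimes. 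Once this is in hand the two displayed estimates close the argument with no further computation. I would phrase the whole proof as a short corollary of Theorem~\ref{th:cutoffprofile} plus Lemma~\ref{l:convWp}, noting that the argument is identical in spirit to \eqref{e: general cutoff} and to the corresponding finite-dimensional statement in \cite{BHPWA}.
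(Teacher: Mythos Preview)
Your argument is correct and is exactly what the paper has in mind: the paper does not write out a proof but simply refers to Corollary~3.2 of \cite{BHPWA} with $e^{-\mathcal{Q}t}$ replaced by $S(t)$, and your derivation via Lemma~\ref{lem:inecut}, Lemma~\ref{l:jaraheat} and the bounds \eqref{e:cotauni} is precisely that argument transplanted to the present setting.

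One small simplification: the last paragraph on the spectral gap is unnecessary, and the displayed ratio there is not quite the right quantity. Lemma~\ref{l:jaraheat} already asserts $e^{\la_{N_h}s}S(s)h\to v_h$ as $s\to\infty$, so for any fixed $\delta>0$ you may write
\[
\Big|\frac{S(\delta t_\e)h}{\e}\Big|
= e^{(1-\delta)\la_{N_h}t_\e}\,\big|e^{\la_{N_h}\delta t_\e}S(\delta t_\e)h\big|,
\]
where the second factor converges to $|v_h|\in(0,\infty)$. The two cases then follow from the sign of $(1-\delta)$ alone, with no need to invoke $\la_{N^*_h}$ or to control the ``moving $\vr_\e$'' uniformly.
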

\noindent The proof is virtually identical to the proof of Corollary~3.2 in~\cite{BHPWA} with $e^{-\mathcal{Q}t}$ being replaced by $S(t)$.
\noindent By reparametrization the small noise cutoff phenomenon for fixed initial data is equivalent to the cutoff phenomenon for initial data and constant noise intensity.
\begin{corollary}[Large initial data for the heat equation]\label{cor:largedataheat}
Let the assumptions of Theorem~\ref{th:cutoffprofileabstrat} for some $p>0$. 
Then for any $h\in H$, $h\neq 0$, it follows 
\begin{equation*}
\lim\limits_{\e \to 0}{\Wp(X^1_{t_\e+\vr}(h/\e),\mu^1)}=\pP_{p,h}(\vr).
\end{equation*}
The analogues of Corollary~\ref{th:cutoffprofile},  Corollary~\ref{cor:errorheat} and Corollary~\ref{cor:simpleheat} are valid when 
$\frac{\Wp(X^\e_{t}(h),\mu^\e)}{{\e^{1\wedge p}}}$ is replaced by $\Wp(X^{1}_{t}(h/\e),\mu^1)$.
\end{corollary}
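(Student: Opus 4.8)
The plan is to deduce this statement directly from the exact (non-asymptotic) reparametrization identity of Corollary~\ref{cor:largedata} combined with the profile convergence already obtained in Theorem~\ref{th:cutoffprofileabstrat}. First I would recall that Corollary~\ref{cor:largedata} provides, for every fixed $\e\in(0,1)$, every $h\in H$ and every $t\gqq 0$, the equality
\[
\frac{\Wp(X^\e_t(h),\mu^\e)}{\e^{1\wedge p}}=\Wp(X^1_t(h/\e),\mu^1).
\]
Since this is a genuine equality valid for each admissible pair $(\e,t)$, I substitute $t=t_\e+\vr$, where $t_\e$ is the scale in \eqref{e:scale090}; this is legitimate for all $\e$ small enough that $t_\e+\vr\gqq 0$, which is all that matters for the limit $\e\to 0$, and it yields
\[
\Wp(X^1_{t_\e+\vr}(h/\e),\mu^1)=\frac{\Wp(X^\e_{t_\e+\vr}(h),\mu^\e)}{\e^{1\wedge p}}\qquad\text{for }\e\text{ small.}
\]

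Next I would pass to the limit $\e\to 0$ on the right-hand side. By Theorem~\ref{th:cutoffprofileabstrat} the right-hand side converges to $\pP_{p,h}(\vr)=\Wp(e^{-\vr\la_{N_h}}v_h+\widetilde{L}_\infty,\widetilde{L}_\infty)$, hence the left-hand side converges to the same limit, which is exactly the asserted identity.

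For the stated analogues nothing new is required: the same reparametrization identity replaces $\Wp(X^\e_t(h),\mu^\e)/\e^{1\wedge p}$ by $\Wp(X^1_t(h/\e),\mu^1)$ verbatim on both sides of each of the relevant statements, so the explicit profile $e^{-\vr\la_{N_h}}|v_h|$ of Corollary~\ref{th:cutoffprofile} (for $p\gqq 1$), the quantitative profile error bound of Corollary~\ref{cor:errorheat}, and the simple cutoff dichotomy of Corollary~\ref{cor:simpleheat} transfer directly, with $h$ fixed and the normalization dropped.

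I do not anticipate any genuine obstacle here; the only point deserving a sentence of care is that the identity of Corollary~\ref{cor:largedata} is an \emph{exact} equality rather than merely an asymptotic one, which is precisely what allows limits, convergence rates and the $\delta$-dichotomy to be transported without any loss, and that $h$ (not $h/\e$) remains the object determining the time scale $t_\e$ and the profile constants $N_h$, $v_h$.
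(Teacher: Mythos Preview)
Your proof is correct and follows exactly the paper's approach: the paper's proof consists of the single sentence ``It follows directly from Corollary~\ref{cor:largedata},'' and you have simply made explicit the substitution $t=t_\e+\vr$ and the appeal to Theorem~\ref{th:cutoffprofileabstrat} that this entails. Your additional remarks about the exactness of the reparametrization identity and the fact that $h$ (not $h/\e$) determines the spectral constants are accurate and helpful clarifications.
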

\begin{proof}
It follows directly from Corollary~\ref{cor:largedata}.
\end{proof}

\bigskip 
\section{Window and profile cutoff for the wave equation with additive noise}\label{s:WA}

\noindent
In this section we establish the profile and window cutoff phenomenon for the wave equation \eqref{e:OU}  subject to small additive  noise. The linear operator $A$ in \eqref{e:OU} is given by
\[
A:=\begin{pmatrix}
0  & \iota\\
\Lap & -\ga \iota
\end{pmatrix}
\]
for some positive $\gamma$.
The operator $\Delta$ denotes the Dirichlet Laplacian
under the following setting.

\begin{hypothesis}\label{h:sadjwave}
Let $H=L^2(\mathcal{O})$ for an open bounded connected domain $\mathcal{O}\subset \RR^d$ with Lipschitz boundaries. 
We assume that
$\Delta$ is a self-adjoint, strictly negative operator with domain $D(\Delta)\subset H$
such that there is a fixed orthonormal basis $(e_n)_{n\in \NN}$ in $H$ verifying: $(e_n)_{n\in \NN}\subset D(\Delta)$,
$\Delta e_n=-\lambda_n e_n$
with Dirichlet boundary conditions,
 that is,
 $e_n\big|_{\partial\mathcal{O}}=0$ and a simple growing point spectrum 
$0<\la_0< \dots< \la_n< \la_{n+1}$ for all $n$
such that
 $\lambda_n \rightarrow \infty$.
\end{hypothesis}
\noindent
The operator $\iota$ denotes the natural embedding operator of $D((-\Delta)^{1/2})$ in $H$.
\noindent
In this setting, we consider 
the Hilbert space $\mathcal{H}=D((-\Delta)^{1/2}) \times H$,  with the inner product
\begin{equation}
\<\<z,\tilde{z}\>\>
%\<\<(u,h),(\widetilde{u},\widetilde{h})\>\>
=\<u,\widetilde{u}\>_{D((-\Delta)^{1/2})}+\<h,\widetilde{h}\>,
\end{equation}
where $z=(u,h),\tilde{z}=(\tilde{u},\tilde{h})\in \mathcal{H}$, and
\[
\<u, \widetilde{u}\>_{D((-\Lap)^{1/2})}
=\<u,\widetilde{u}\>+\<\nabla u,\nabla \widetilde{u}\>_{H^d}.
\]
Let $(\mathcal{S}_\gamma(t)z)_{t\gqq 0}$ be the solution of the Cauchy problem
\begin{equation}\label{e:semiwave}
\left\{
\begin{array}{r@{\;=\;}l}
\frac{\pd}{\pd t}\mathcal{S}_\gamma(t)z& A \mathcal{S}_\gamma(t)z\quad  
\textrm{  for any }\quad t\gqq 0,\\
\mathcal{S}_\gamma(0)z & z\in \mathcal{H}
\end{array}
\right.
\end{equation}
and consider the stochastic linear damped wave equation
\begin{equation}\label{e:wave}
\left\{
\begin{array}{r@{\;=\;}l}
\ud X^{\e}_t(z)&A X^{\e}_t(z)\ud t +
\e \ud \cL_t\quad  
\textrm{  for any }\quad t\gqq 0,\\
X^{\e}_0(z) & z,
\end{array}
\right.
\end{equation} 
where
\[
\cL_t=\begin{pmatrix}
0  \\
L_t
\end{pmatrix}, \quad t\gqq 0,
\]
and $L=(L_t)_{t\gqq 0} $ is a L\'evy process taking values on $H$.
The domain of $A$ is given by $D(\Delta)\times D((-\Delta)^{1/2})$.
Since $\gamma>0$, Proposition~3.5 (3) in~\cite{Pritchard81} yields that the semigroup $\Sg$ is asymptotically exponentially stable, that is, there are $C_*, \lambda_*>0$ such that 
\begin{equation}\label{eq:estrella}
\|\Sg(t)\| \lqq C_* e^{-\lambda_* t}\quad \textrm{ for all } \quad t\gqq 0. 
\end{equation}
This implies the existence of a unique invariant probability measure $\mu^\e$ for the dynamics \eqref{e:wave}.

\bigskip 
\subsection{{Explicit computations of the eigensystem}}
In the sequel, we calculate the spectrum and the eigenfunctions  of $A$ in terms of the well-known spectrum and the eigenfunctions of $-\Delta$.
Denote the spectrum of $-\Lap$ by $0< \la_1< \cdots <\la_n< \cdots$.
Formally, the characteristic equations of $A$ are given by
\begin{equation}\label{eq:careq}
\omega^2+\gamma \omega+\la=0,
\end{equation}
where $\la\in \{\la_j:j\in \mathbb{N}\}$. 
This can be made precise in terms of spectral calculus.
For this sake, we determine the eigenfunctions of
$A$. 
The point spectrum of $A$ (conveniently labelled)  is given by
\begin{equation}
\begin{cases}
\hat{\omega}_{\pm k}:=-\frac{\ga}{2}\pm \frac{\sqrt{\gamma^2-4\la_k}}{2}\quad & \textrm{ for  $k\in \mathbb{N}$ such that } \gamma^2>4\la_k,
\\
\omega_{0}:=-\frac{\gamma}{2} \quad&
\textrm{ for  $k\in \mathbb{N}$ such that }
\gamma^2=4\la_k,\\
{\omega}_{\pm k}:=
-\frac{\ga}{2}\pm \ii \frac{\sqrt{4\la_k-\gamma^2}}{2}\quad & \textrm{ for  $k\in \mathbb{N}$ such that }
\gamma^2<4\la_k.
\end{cases}
\end{equation}
We point out that the set $\{k\in \mathbb{N}:\gamma^2\gqq 4\la_k\}$ may be empty (subcritical damping).
\begin{hypothesis}\label{h:nonresonance}
We assume the non-resonance condition $\gamma^2\neq 4\la_k$ for all $k\in \NN$. 
\end{hypothesis}
\noindent
The eigenfunctions of $A$ can be calculated explicitly in terms of the eigenfunctions  $(e_k)_{k\in \mathbb{N}}$  of $-\Delta$, where 
 $\Lap e_k=-\la_k e_k$. 
That is,
\begin{equation}
\begin{cases}
\hat{v}_{\pm k}:=\begin{pmatrix}
e_k\\
\hat{\omega}_{\pm k} e_k
\end{pmatrix}\quad & \textrm{ for  $k\in \mathbb{N}$ such that } \gamma^2>4\la_k,
\\
{v}_{\pm k}:=
\begin{pmatrix}
e_k\\
\omega_{\pm k} e_k
\end{pmatrix}
\quad & \textrm{ for  $k\in \mathbb{N}$ such that }
\gamma^2<4\la_k.
\end{cases}
\end{equation}
\bigskip 
\subsection{Explicit computations of the inner products}
Since $e_k\in D(\Delta)$ for all $k\in \mathbb{N}$, the Dirichlet boundary conditions in Hypothesis~\ref{h:sadjwave} and Green's formula
yield
\[
\<\nabla e_j,\nabla e_k\>_{H^d}=\<e_j,-\Delta e_k\>=\<e_j,\lambda_k e_k\>=\lambda_k \<e_{j},e_{k}\>=\lambda_k\delta_{j,k} \quad \textrm{ for all } j,k\in \mathbb{N},
\]
where $\delta_{j,k}$ is Kronecker's delta.
Hence for $j,k\in \mathbb{Z}\setminus\{0\}$ such that 
$\gamma^2<4\la_{|j|}$ and $\gamma^2<4\la_{|k|}$, it follows that
\begin{equation}\label{eq:eigenbase}
\begin{split}
\dv v_j,v_{k}\dw&=
\<e_{|j|},e_{|k|}\>+
\<\nabla e_{|j|},\nabla e_{|k|}\>_{H^d}+\<\omega_{j} e_{|j|},\omega_{k} e_{|k|}\>
=
\<e_{|j|},e_{|k|}\>+
\lambda_{|k|}\<e_{|j|},e_{|k|}\>+\omega_{j}\bar \omega_{k}\< e_{|j|}, e_{|k|}\>\\
%&
%=(1+\la_{|k|}+\omega_k\bar\omega_{k})\delta_{|j|,|k|},\\
&=(1+\la_{|k|}+|\omega_k|^2\cdot\delta_{j,k}+\omega^2_{-k}\cdot\delta_{j,-k})\cdot\delta_{|j|,|k|},
\end{split}
\end{equation}
where in an abuse of notation the inner product 
$\<\omega_j e_j,\omega_k e_k\>$ denotes the standard sesqui-linear complexification of the inner product $\<,\>$ in the real vector space $H$.
In particular,
\begin{equation} \label{eq:innerproducts}
\begin{split}
\dv v_k,v_{k}\dw 
=1+\la_{|k|}+|\omega_k|^2,\quad
\dv v_{-k},v_{k}\dw 
=1+\la_{|k|}+\omega^2_{-k}\quad \textrm{ and }\quad
\dv v_{k},v_{-k}\dw 
=1+\la_{|k|}+\omega^2_{k}.
\end{split}
\end{equation}
For convenience denote $\mathbb{Z}_*=\mathbb{Z}\setminus\{0\}$.
Analogously to the previous calculations, we have the following cases:
\begin{itemize}
\item[i)]
For $j,k\in \mathbb{Z}_*$ such that 
$\gamma^2>4\la_{|j|}$ and $\gamma^2>4\la_{|k|}$
\begin{equation}\label{eq:eigenbase1}
\begin{split}
\dv \hat v_j,\hat v_{k}\dw&=
\<e_{|j|},e_{|k|}\>+
\<\nabla e_{|j|},\nabla e_{|k|}\>_{H^d}+\<\hat \omega_j e_{|j|},\hat \omega_k e_{|k|}\>=(1+\la_{|k|}+\hat \omega^2_k\cdot 
\delta_{j,k}+\hat \omega_{-k}\hat \omega_{k}\cdot
\delta_{j,-k}
)\cdot\delta_{|j|,|k|}.
\end{split}
\end{equation}
In particular,
$
\dv \hat v_k,\hat v_{k}\dw 
=1+\la_{|k|}+\hat \omega^2_k\quad \textrm{ and }\quad
\dv \hat v_{-k},\hat v_{k}\dw=\dv \hat v_{k},\hat v_{-k}\dw
=1+\la_{|k|}+\hat \omega_{-k} \hat\omega_{k}.
$\\
\item[ii)] For $j,k\in \mathbb{Z}_*$ such that 
$\gamma^2>4\la_{|j|}$ and $\gamma^2<4\la_{|k|}$
\begin{equation}\label{eq:eigenbase3}
\begin{split}
\dv \hat v_j,v_{k}\dw&=
\<e_{|j|},e_{|k|}\>+
\<\nabla e_{|j|},\nabla e_{|k|}\>_{H^d}+\<\hat \omega_j e_{|j|},\omega_k e_{|k|}\>
=0,\quad \mbox{ due to }\quad |j|\neq |k|.
\end{split}
\end{equation}
\end{itemize}
Let $\Lambda=(\lambda_j)_{j\in \mathbb{N}}$ be the eigenvalues of $-\Delta$. We recall Hypothesis~\ref{h:nonresonance} and define the unique index 
\begin{equation}\label{e:jotaestrella}
j_* := \begin{cases} 
\min\{j\in \NN~|~ \gamma^2 \in (4\la_{j}, 4 \la_{j+1})\} & \mbox{ if } \gamma^2 > 4 \la_1,\\ 
0 & \mbox{ if } \gamma^2 < 4\la_1.\end{cases} 
\end{equation}
We define the $\Lambda$-weighted subspace of $\mathcal{H}$ by
\[
{\mathcal{H}}_{\Lambda}:=\left\{z\in \mathcal{H}:z=\sum_{\substack{j\in \mathbb{Z}_*\\ |j|\lqq j_*}}a_j \hat{v}_j+\sum_{\substack{j\in \mathbb{Z}_*\\ |j|> j_*}}b_j v_{j}\quad \textrm{and}\quad\sum_{\substack{j\in \mathbb{Z}_*\\ |j|> j_*}}|b_j|^2\la_{|j|}<\infty\right\}.
\] 
The following proposition is the infinite dimensional version of Lemma~2.1 in \cite{BHPWA} for the wave semigroup $\Sg$ with initial values in ${\mathcal{H}}_{\Lambda}$.
\begin{proposition}\label{prop:casos}
Assume Hypotheses~\ref{h:sadjwave} and \ref{h:nonresonance}.
Then for any given $z\in \HL$, $z\neq 0$, we have the following statements: 
\begin{enumerate}
\item[I)] \textbf{Overdamping.}
If $(\gamma / 2)^2 > \la_1$, there exist 
$\omega_*:=\omega_*(z)>0$ and $v:=v(z)\neq 0$ such that
\begin{equation}\label{e:wavejaraA}
\lim\limits_{t\to \infty}e^{\omega_* t}\mathcal{S}_\gamma(t)z=v(z).
\end{equation}
\item[II)] 
\textbf{Subcritical damping.}
If $(\gamma / 2)^2 < \la_1$, $\omega_*:=\gamma/2>0$, there is $v(t,z)\in \mathcal{H}$ such that
\begin{equation}\label{e:wavejaraB}
\lim\limits_{t\to \infty}
|e^{\omega_* t}\mathcal{S}_\gamma(t)z-v(t,z)|=0
\end{equation}
and it follows 
\begin{equation}\label{eq:cotasinfsup}
0<\liminf\limits_{t\to \infty}|v(t,z)|\lqq 
\limsup\limits_{t\to \infty}|v(t,z)|<\infty.
\end{equation}
\end{enumerate}
\end{proposition}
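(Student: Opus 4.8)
The plan is to reduce both cases to an eigenfunction expansion of $\mathcal{S}_\gamma(t)z$ along the eigensystem of $A$ computed in the previous subsections, and then mimic the argument of Lemma~\ref{l:jaraheat}, the crucial new feature being that the surviving (slowest-decaying) modes may now be genuinely complex, producing oscillations in the limiting object $v(t,z)$.

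\emph{Step 1: Spectral decomposition of the flow.} For $z\in\HL$ write $z=\sum_{|j|\lqq j_*}a_j\hat v_j+\sum_{|j|>j_*}b_j v_j$. Since each $\hat v_{\pm j}$, $v_{\pm j}$ is an eigenvector of $A$ with eigenvalue $\hat\omega_{\pm j}$ respectively $\omega_{\pm j}$, spectral calculus (together with the fact that $\Sg$ is a $C_0$-semigroup generated by $A$, cf.\ \eqref{e:semiwave}) gives
\[
\mathcal{S}_\gamma(t)z=\sum_{\substack{j\in\mathbb{Z}_*\\ |j|\lqq j_*}}a_j e^{\hat\omega_j t}\hat v_j+\sum_{\substack{j\in\mathbb{Z}_*\\ |j|>j_*}}b_j e^{\omega_j t}v_j,
\]
with convergence in $\mathcal{H}$; the weighted summability $\sum_{|j|>j_*}|b_j|^2\la_{|j|}<\infty$ built into the definition of $\HL$ guarantees that the tail is controlled in the $\mathcal{H}$-norm uniformly for $t\gqq 0$, using $\Rea\,\omega_j\lqq -\gamma/2$ and the inner-product formulas \eqref{eq:innerproducts}. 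This is the step where Hypothesis~\ref{h:sadjwave} (simple point spectrum, $\la_n\to\infty$) and the non-resonance Hypothesis~\ref{h:nonresonance} enter: they ensure the labels $\hat\omega_{\pm j},\omega_{\pm j}$ are well-defined and no exceptional Jordan-block mode $\omega_0$ occurs.

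\emph{Step 2: Overdamping ($(\gamma/2)^2>\la_1$).} Here the slowest mode is the real eigenvalue $\hat\omega_{+1}=-\frac{\gamma}{2}+\frac{\sqrt{\gamma^2-4\la_1}}{2}$, which is strictly larger (closer to $0$) than every other $\Rea$ of an eigenvalue — one checks $\hat\omega_{+1}>\hat\omega_{+j}$ for $j\gqq 2$ from monotonicity of $\la_j$, $\hat\omega_{+1}>\hat\omega_{-j}$ trivially, and $\hat\omega_{+1}>-\gamma/2\gqq\Rea\,\omega_j$ for the underdamped modes. Set $\omega_*:=-\hat\omega_{+1}>0$. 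Letting $N$ be the smallest index with $a_N\neq 0$ or (if all $a_j=0$) passing to the underdamped block, define $v(z)$ as the projection of $z$ onto the top real eigenspace; then, exactly as in \eqref{eq:errordif}, $|e^{\omega_* t}\mathcal{S}_\gamma(t)z-v(z)|^2$ is a sum of terms decaying like $e^{2(\hat\omega_{+1}-\hat\omega_{j})t}$ (finitely many) plus a tail bounded by $e^{2(\hat\omega_{+1}+\gamma/2)t}\cdot\big(\sup_j\dv v_j,v_j\dw^{1/2}\cdot\text{(weighted }\ell^2\text{ norm)}\big)\to 0$. This yields \eqref{e:wavejaraA} with $v(z)\neq 0$. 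The one subtlety is the edge case where the dominant surviving mode actually lies in the underdamped block (when $z$ has no component on the first few overdamped eigenvectors); then $v(z)$ is oscillatory and one is really in the situation of Step~3 below, so \eqref{e:wavejaraA} should be read as the limit of $|\cdot|$ — I would either restrict the overdamping statement to $z$ with nonzero overdamped leading coefficient, or absorb this case into Step~3.

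\emph{Step 3: Subcritical damping ($(\gamma/2)^2<\la_1$).} Now $j_*=0$, every eigenvalue equals $\omega_{\pm k}=-\frac{\gamma}{2}\pm\ii\frac{\sqrt{4\la_k-\gamma^2}}{2}$, so \emph{all} modes decay at exactly the same rate $\gamma/2$; set $\omega_*:=\gamma/2$. Multiplying the expansion of Step~1 by $e^{\omega_* t}$ kills the common exponential but leaves the oscillatory factors $e^{\pm\ii\beta_k t}$ with $\beta_k:=\frac{\sqrt{4\la_k-\gamma^2}}{2}$, so there is no $t$-independent limit; instead define
\[
v(t,z):=\sum_{k\gqq 1}\Big(b_k e^{\ii\beta_k t}v_k+b_{-k}e^{-\ii\beta_k t}v_{-k}\Big),
\]
a genuinely $t$-dependent element of $\mathcal{H}$ (well-defined by the weighted $\ell^2$ condition, using $\dv v_{\pm k},v_{\pm k}\dw\sim\la_k$ from \eqref{eq:innerproducts}). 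Then $e^{\omega_* t}\mathcal{S}_\gamma(t)z-v(t,z)=0$ identically once the expansion is exact — more precisely the difference is the error from truncating the (absolutely $\mathcal{H}$-convergent) series, which is $o(1)$ as the truncation level $\to\infty$ uniformly in $t$; this proves \eqref{e:wavejaraB}. For \eqref{eq:cotasinfsup}: by orthogonality of the $v_{\pm k}$ across different $|k|$ (again \eqref{eq:innerproducts}), $|v(t,z)|^2=\sum_k\|b_k e^{\ii\beta_k t}v_k+b_{-k}e^{-\ii\beta_k t}v_{-k}\|_{\mathcal{H}}^2$, and each summand is a trigonometric polynomial in $t$ bounded below by a positive constant whenever $(b_k,b_{-k})\neq 0$ — for this one needs the cross term $\dv v_{-k},v_k\dw=1+\la_{|k|}+\omega_{-k}^2$ to not allow complete destructive cancellation, which follows since $|\omega_{-k}^2|=\la_k$ while the diagonal term is $1+\la_k+|\omega_k|^2=1+2\la_k$, so the quadratic form on $(b_k,b_{-k})$ stays uniformly elliptic in $k$. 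Taking $\liminf/\limsup$ over $t$ and using that at least one coefficient pair is nonzero ($z\neq 0$) gives the two-sided bound \eqref{eq:cotasinfsup}.

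\emph{Main obstacle.} The delicate point is the lower bound in \eqref{eq:cotasinfsup}: one must rule out that the oscillating sum $v(t,z)$ comes arbitrarily close to $0$ along some sequence $t_n\to\infty$. This requires a quantitative non-cancellation estimate — either from the uniform ellipticity of the $2\times2$ Gram matrix of $\{v_k,v_{-k}\}$ indicated above, or, if the $\beta_k$ are rationally independent, from a Kronecker/Weyl equidistribution argument showing $v(t,z)$ spends positive-density time away from $0$. Establishing this cleanly for \emph{all} nonzero $z\in\HL$ (not just finitely supported ones) — i.e. interchanging the infimum over $t$ with the infinite sum — is where the real work lies; the semigroup expansion in Step~1 and the decay estimates in Steps~2–3 are routine in comparison.
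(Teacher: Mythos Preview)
Your overall strategy---eigenfunction expansion of $\Sg(t)z$, identification of the slowest mode, and in the subcritical case setting $v(t,z)$ equal to the oscillatory series itself---is exactly the paper's. The place where you part ways is the lower bound in \eqref{eq:cotasinfsup}, which you flag as the ``main obstacle'' and propose to attack via uniform-in-$k$ ellipticity of the Gram matrices or Kronecker/Weyl equidistribution. Both are unnecessary, and your worry about ``interchanging the infimum over $t$ with the infinite sum'' is misplaced. Once you have the orthogonal splitting
\[
|v(t,z)|^2=\sum_{k}\big|b_k e^{\ii\beta_k t}v_k+b_{-k}e^{-\ii\beta_k t}v_{-k}\big|^2
\]
(which you already wrote down), you simply \emph{drop every summand but one}: pick any $j_0$ with $(b_{j_0},b_{-j_0})\neq 0$ and bound $|v(t,z)|^2$ below by that single term. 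No interchange of $\inf_t$ and $\sum_k$ is involved. For the surviving $2$-dimensional piece the paper argues by compactness: if it vanished along $t_n\to\infty$, Bolzano--Weierstrass gives subsequential limits $z^{\pm}$ of $e^{\pm\ii\beta_{j_0}t_n}$ on the unit circle with $z^{+}b_{j_0}v_{j_0}+z^{-}b_{-j_0}\bar v_{j_0}=0$, contradicting the linear independence of $v_{j_0}$ and $\bar v_{j_0}=v_{-j_0}$ (which holds since $\omega_{j_0}\neq\bar\omega_{j_0}$). Your ellipticity remark about the $2\times 2$ Gram matrix of $\{v_{j_0},v_{-j_0}\}$ is an equivalent way of saying exactly this for a \emph{single} index and already suffices; the uniformity over all $k$ you reach for is irrelevant.

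A smaller point on Step~2: $\omega_*$ must genuinely depend on $z$, as the statement indicates. Your line ``Set $\omega_*:=-\hat\omega_{+1}$'' followed by ``Letting $N$ be the smallest index with $a_N\neq 0$'' is internally inconsistent; the paper takes $\omega_*$ to be minus the leading real eigenvalue \emph{among those with nonzero coefficient in $z$} from the outset, so that $v(z)$ is a single nonzero eigenvector. The edge case you note---$z$ having no overdamped component at all---is handled in the paper exactly as you suggest: one falls back on the subcritical-type argument for the purely oscillatory tail.
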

\noindent The proof is given by Lemma~\ref{lem:spectral} (Case I)) and Lemma~\ref{lem:spectral2} (Case II)) after Theorem~\ref{th:abstwave}.
\begin{remark}
Note that formally the limit \eqref{e:wavejaraA} in I) implies a limit of type \eqref{e:wavejaraB}. However, in Case I) we obtain the stronger profile cutoff phenomenon, while in Case II) we still obtain the window cutoff phenomenon.
\end{remark}
\noindent
The following theorem shows the cutoff phenomenon for the overdamped and subcritically damped wave equation 
in the Wasserstein distance on $\mathcal{H}$
with additive noise for any $p>0$. 
\begin{theorem}[Abstract profile and window cutoff phenomenon for any $p>0$]\label{th:abstwave}
Assume Hypotheses~\ref{h:sadjwave}, \ref{h:nonresonance} and \ref{h:moment} for some $p>0$.
Then 
we have the following results in the respective cases I) and II) of Proposition~\ref{prop:casos} (with their respective notation).
\begin{enumerate}
\item[i)] Let $z\in \HL$, $z\neq 0$, $\omega_*:=\omega_*(z)$, $v:=v(z)$ given in item I) of Proposition~\ref{prop:casos} and
\begin{equation}\label{eq:timereal}
t_\e:=t_\e(z)=\frac{1}{\omega_*}|\ln(\e)|,\qquad \e>0. 
\end{equation}
Then it holds the following profile cutoff phenomenon with the 
(abstract) cutoff profile $\mathcal{P}_{p,z}(\vr)$: 
\begin{equation}\label{eq:hjk}
\lim\limits_{\e\to 0}
\frac{\Wp(X^\e_{t_\e+\vr}(z),\mu^\e)}{\e^{1\wedge p}}=\Wp(e^{-\omega_* \vr }v+\widetilde{\mathcal{L}}_\infty,\widetilde{\mathcal{L}}_\infty)=:\mathcal{P}_{p,z}(\vr)\quad \textrm{ for any} \quad \vr\in \mathbb{R}
\end{equation}
and
\[
\widetilde{\mathcal{L}}_\infty\stackrel{d}{=}\lim\limits_{t\to \infty}\int_{0}^{t}\mathcal{S}_\gamma(t-s)\ud \mathcal{L}(s).
\]
\item[ii)] 
Let $z\in \HL$, $z\neq 0$, $\omega_*:=\gamma/2$ given in item II) of Proposition~\ref{prop:casos} and 
\begin{equation}\label{eq:timecomplejo}
t_\e=\frac{2}{\gamma}|\ln(\e)|, \qquad \e>0. 
\end{equation}
Then we have the following window cutoff phenomenon 
\begin{equation}\label{eq:inftylimitzero}
\lim\limits_{\vr\to -\infty}\liminf\limits_{\e\to 0}
\frac{\Wp(X^\e_{t_\e+\vr}(z),\mu^\e)}{\e^{1\wedge p}}=\infty
\qquad \mbox{ and }\qquad \lim\limits_{\vr\to \infty}\limsup\limits_{\e\to 0}
\frac{\Wp(X^\e_{t_\e+\vr}(z),\mu^\e)}{\e^{1\wedge p}}=0.
\end{equation}
\end{enumerate}
\end{theorem}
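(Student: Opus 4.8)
The plan is to treat the two cases separately, exploiting in each the cutoff inequality of Lemma~\ref{lem:inecut} (and Corollary~\ref{cor:inecut090} when $p\gqq 1$), which reduces everything to the asymptotic behaviour of the deterministic quantity $\Sg(t_\e+\vr)z/\e$ on $\mathcal{H}$. By \eqref{e:limsup} and \eqref{e:liminf}, for any diverging time scale $(\mathfrak t_\e)$ the limit of $\e^{-(1\wedge p)}\Wp(X^\e_{\mathfrak t_\e}(z),\mu^\e)$ exists if and only if $\lim_{\e\to 0}\Wp\big(\Sg(\mathfrak t_\e)z/\e + \widetilde{\mathcal L}_\infty, \widetilde{\mathcal L}_\infty\big)$ exists, and the two coincide; the only input about $\widetilde{\mathcal L}_\infty$ we need is that it is a well-defined $H$-valued random variable with finite $(1\wedge p)$-th moment, which follows from \eqref{eq:estrella}, Hypothesis~\ref{h:moment} and Theorem~6.7 in~\cite{AnnaJ} exactly as in Section~\ref{ss:OUP}.

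For Case~I) (overdamping), I would first invoke Proposition~\ref{prop:casos}~I), which gives $\omega_*>0$ and $v=v(z)\neq 0$ with $e^{\omega_* t}\Sg(t)z\to v$ in $\mathcal{H}$. With the choice \eqref{eq:timereal} we have $e^{-\omega_* t_\e}/\e = 1$, so, exactly as in \eqref{e:desigual090},
\[
\Big|\frac{\Sg(t_\e+\vr)z}{\e} - e^{-\omega_*\vr}v\Big| = e^{-\omega_*\vr}\,\big|e^{\omega_*(t_\e+\vr)}\Sg(t_\e+\vr)z - v\big| \xrightarrow[\e\to 0]{} 0,
\]
since $t_\e+\vr\to\infty$. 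Hence $\Sg(t_\e+\vr)z/\e \to e^{-\omega_*\vr}v$ in $\mathcal{H}$, and the shift continuity (Lemma~\ref{lem:properties}~(e)) together with \eqref{e:limitexist090}-type reasoning gives \eqref{eq:hjk}. The profile is positive because $v\neq 0$ and, by \eqref{e:cotauni} in Lemma~\ref{lem:properties}~(d), $\Wp(e^{-\omega_*\vr}v+\widetilde{\mathcal L}_\infty,\widetilde{\mathcal L}_\infty)>0$. When $p\gqq 1$ one may further replace Lemma~\ref{lem:inecut} by Corollary~\ref{cor:inecut090} and use shift linearity to read off the explicit profile $e^{-\omega_*\vr}|v|$, as in Corollary~\ref{th:cutoffprofile}; I would state this parenthetically.

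For Case~II) (subcritical damping) the obstacle is that $e^{\omega_* t}\Sg(t)z$ no longer converges: Proposition~\ref{prop:casos}~II) only provides $v(t,z)$ with $|e^{\omega_* t}\Sg(t)z - v(t,z)|\to 0$ and $0<\liminf_t|v(t,z)|\lqq\limsup_t|v(t,z)|<\infty$. So I aim only for the window statement \eqref{eq:inftylimitzero}. With \eqref{eq:timecomplejo} we again have $e^{-\omega_* t_\e}/\e=1$, hence $\big|\Sg(t_\e+\vr)z/\e\big| = e^{-\omega_*\vr}\big|e^{\omega_*(t_\e+\vr)}\Sg(t_\e+\vr)z\big| = e^{-\omega_*\vr}\big(|v(t_\e+\vr,z)| + o(1)\big)$ as $\e\to 0$. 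Combining this with the two-sided bound \eqref{e:cotauni} of Lemma~\ref{lem:properties}~(d): for $\vr\to-\infty$ the factor $e^{-\omega_*\vr}\to\infty$ and $\liminf_t|v(t,z)|>0$ force $|\Sg(t_\e+\vr)z/\e|\to\infty$, and then the lower bound in \eqref{e:cotauni} (whose subtracted term $2\EE[|\widetilde{\mathcal L}_\infty|^{1\wedge p}]$ is a fixed finite constant) pushes $\Wp(\Sg(t_\e+\vr)z/\e+\widetilde{\mathcal L}_\infty,\widetilde{\mathcal L}_\infty)$ to $\infty$; together with \eqref{e:liminf} this gives the first limit in \eqref{eq:inftylimitzero}. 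For $\vr\to\infty$ the factor $e^{-\omega_*\vr}\to 0$ and $\limsup_t|v(t,z)|<\infty$ give $|\Sg(t_\e+\vr)z/\e|\to 0$ uniformly along $\e$, so the upper bound $|u_1|^{1\wedge p}$ in \eqref{e:cotauni} and \eqref{e:limsup} yield the second limit. The interchange of the order of $\liminf_{\e}$/$\limsup_\e$ with $\lim_\vr$ is harmless here because for each fixed $\vr$ the $\e$-limit already exists by Lemma~\ref{lem:inecut}, so one is really taking $\lim_{\vr}$ of an explicit function of $\vr$; I would make this remark explicit. The only genuinely delicate point is bookkeeping the $o(1)$ terms uniformly in $\e$ for fixed $\vr$ — which is immediate from Proposition~\ref{prop:casos}~II) since $t_\e+\vr\to\infty$ — so no further estimates are needed.
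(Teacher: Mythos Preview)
Your argument is essentially the paper's own proof: both cases are reduced via Lemma~\ref{lem:inecut} to the deterministic asymptotics of $\Sg(t_\e+\vr)z/\e$, and then Proposition~\ref{prop:casos} together with the bounds in \eqref{e:cotauni} and shift continuity does the rest. The computations you outline for Case~I) and the use of the two-sided estimate \eqref{e:cotauni} for Case~II) match the paper step by step.

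One remark in your Case~II) write-up is wrong, though it does not damage the actual proof. You claim that ``for each fixed $\vr$ the $\e$-limit already exists by Lemma~\ref{lem:inecut}, so one is really taking $\lim_{\vr}$ of an explicit function of $\vr$.'' This is false: Lemma~\ref{lem:inecut} only says the limit exists \emph{iff} the corresponding Wasserstein limit exists, and in Case~II) the latter generally does \emph{not} exist because $|v(t_\e+\vr,z)|$ oscillates (this is precisely why one only gets window cutoff). There is no interchange of limits to justify: the statement \eqref{eq:inftylimitzero} already has the order $\lim_\vr\limsup_\e$ and $\lim_\vr\liminf_\e$, and your earlier bounds (via \eqref{e:limsup}, \eqref{e:liminf} and \eqref{e:cotauni}) establish exactly that, without needing any $\e$-limit to exist. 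Simply delete that sentence.
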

\begin{proof}[Proof of Theorem~\ref{th:abstwave}]
Let $p>0$, $z\in \HL$, $z\neq 0$, $t\gqq 0$. \\
\noindent \textbf{Proof of case i).}
{
For any $(s_\e)_{\e>0}$ such that $s_\e\to \infty$ as $\e\to 0$, limits
\eqref{e:limsup} and \eqref{e:liminf} given in Lemma~\ref{lem:inecut} yield that
\begin{equation}\label{e:limitexistz87}
\lim_{\e\to 0}\frac{\Wp(X^\e_{s_\e}(z),\mu^\e)}{\e^{1\wedge p}}\quad \textrm{exists if and only if}\quad
\lim_{\e\to 0}\Wp\Big(\frac{\Sg(s_\e)z}{\e}+\widetilde{\mathcal{L}}_\infty, \widetilde{\mathcal{L}}_\infty\Big)\quad \textrm{exists}
\end{equation}
and both limits coincide.
}
{By \eqref{eq:timereal} we observe} 
$e^{-\omega_* t_\e}=\e$ and
\begin{equation}\label{eq:trianwave}
\begin{split}
\left|\frac{\Sg(t_\e+\vr)z}{\e}- e^{-\omega_*\vr}v(z)\right|= e^{-\omega_*\vr}
\left|e^{\omega_*(t_\e+\vr)}\Sg(t_\e+\vr)z-v(z)\right|.
\end{split}
\end{equation}
Proposition~\ref{prop:casos} item I) with the help of \eqref{eq:trianwave} 
implies 
\begin{equation}\label{eq:limitwave1}
\lim\limits_{\e\to 0}\left|\frac{\Sg(t_\e+\vr)z}{\e}-e^{-\omega_*\vr}v(z)\right|=0.
\end{equation}
Combining \eqref{e:limitexistz87} and \eqref{eq:limitwave1} with the help of item {(e)} in Lemma~\ref{lem:properties}
yields \eqref{eq:hjk}.
\hfill
\\

\noindent
\textbf{Proof of case ii).} 
{
For any $(s_\e)_{\e>0}$ such that $s_\e\to \infty$ as $\e\to 0$, limits
\eqref{e:limsup} and \eqref{e:liminf} given in Lemma~\ref{lem:inecut} yield that
\begin{equation}\label{e:limsup56}
\limsup_{\e\to 0}\frac{\Wp(X^\e_{s_\e}(z),\mu^\e)}{\e^{1\wedge p}}=
\limsup_{\e\to 0}\Wp\Big(\frac{\Sg(s_\e)z}{\e}+\widetilde{L}_\infty, \widetilde{L}_\infty\Big)
\end{equation}
and
\begin{equation}\label{e:liminf56}
\liminf_{\e\to 0}\frac{\Wp(X^\e_{s_\e}(z),\mu^\e)}{\e^{1\wedge p}}=
\liminf_{\e\to 0}\Wp\Big(\frac{\Sg(s_\e)z}{\e}+\widetilde{L}_\infty, \widetilde{L}_\infty\Big).
\end{equation}
}
The choice of $t_\e$ given in \eqref{eq:timecomplejo} and triangle inequality imply 
\begin{align*}
\left|\frac{\Sg(t_\e+\vr)z}{\e}\right|&=e^{-(\gamma/2) \vr }\left|e^{(\gamma/2) (t_{\e}+\vr)}\Sg(t_\e+\vr)z\right|\\
&\lqq e^{-(\gamma/2) \vr }\left|e^{(\gamma/2) (t_{\e}+\vr)}\Sg(t_\e+\vr)z-v(t_{\e}+\vr,z)\right|+e^{-(\gamma/2) \vr}\left|v(t_{\e}+\vr,z)\right|.
\end{align*}
Conversely,
\begin{align*}
e^{-(\gamma/2) \vr}\left|v(t_{\e}+\vr,z)\right|&\lqq 
e^{-(\gamma/2) \vr }\left|e^{(\gamma/2) (t_{\e}+\vr)}\Sg(t_\e+\vr)z-v(t_{\e}+\vr,z)\right|+
\left|\frac{\Sg(t_\e+\vr)z}{\e}\right|.
\end{align*}
The preceding inequalities and the subadditivity of the map $[0,\infty)\ni r \to r^{1\wedge p}$ 
with the help of Proposition~\ref{prop:casos} item II) imply
\begin{equation}\label{eq:arabcota}
\begin{split} 
\limsup\limits_{\e\to 0}\left|\frac{\Sg(t_\e+\vr)z}{\e}\right|^{1\wedge p}
&\lqq 
\limsup\limits_{\e\to 0}
e^{-(\gamma/2) \vr(1\wedge p)}\left|v(t_{\e}+\vr,z)\right|^{1\wedge p}\lqq e^{-\gamma/2 \vr({1\wedge p})}
\limsup\limits_{t\to \infty}
\left|v(t,z)\right|^{1\wedge p},
 \\
\liminf\limits_{\e\to 0}
\left|\frac{\Sg(t_\e+\vr)z}{\e}\right|^{1\wedge p}
&\gqq 
\liminf\limits_{\e\to 0}
e^{-(\gamma/2) \vr(1\wedge p)}\left|v(t_{\e}+\vr,z)\right|^{1\wedge p}\gqq e^{-(\gamma/2) \vr (1\wedge p)}
\liminf\limits_{t\to \infty}
\left|v(t,z)\right|^{1\wedge p}.
\end{split}
\end{equation}
Moreover,
\begin{equation}\label{eq:beab2}
0<
\liminf\limits_{t\to \infty}
\left|v(t,z)\right|^{1\wedge p}\lqq \limsup\limits_{t\to \infty}
\left|v(t,z)\right|^{1\wedge p}<\infty.
\end{equation}
Hence, \eqref{eq:cotasinfsup} in item II) of Proposition~\ref{prop:casos} yields
\begin{align}\label{eq:0infty} 
\lim\limits_{\vr \to \infty}\limsup\limits_{\e\to 0}\left|\frac{\Sg(t_\e+\vr)z}{\e}\right|^{1\wedge p}=0
\quad \textrm{ and }\quad
\lim\limits_{\vr \to -\infty}\liminf\limits_{\e\to 0}
\left|\frac{\Sg(t_\e+\vr)z}{\e}\right|^{1\wedge p}
=\infty.
\end{align}
On the one hand, by \eqref{e:cotauni} given in  item (d) of Lemma~\ref{lem:properties} and \eqref{eq:0infty} we have
\[
\lim\limits_{\vr \to \infty}\limsup_{\e\to 0}\Wp\Big(\frac{\Sg(t_\e+\vr)z}{\e}+\widetilde{\mathcal{L}}_\infty, \widetilde{\mathcal{L}}_\infty\Big)\lqq 
\lim\limits_{\vr \to \infty}\limsup_{\e\to 0}\left|\frac{\Sg(t_\e+\vr)z}{\e}\right|^{1\wedge p}
=0,
\]
which implies the second limit in \eqref{eq:inftylimitzero}.
On the other hand,
by \eqref{e:cotauni} given in  item (d) of Lemma~\ref{lem:properties} and \eqref{eq:0infty} we have 
\begin{equation}
\begin{split}
\liminf_{\e\to 0}\Wp\Big(\frac{\Sg(t_\e+\vr)z}{\e}+\widetilde{\mathcal{L}}_\infty, \widetilde{\mathcal{L}}_\infty\Big)\gqq 
\liminf_{\e\to 0}\left|\frac{\Sg(t_\e+\vr)z}{\e}\right|^{1\wedge p}-2\EE\left[\left|\widetilde{\mathcal{L}}_\infty\right|^{1\wedge p}\right].
\end{split}
\end{equation}
Sending $\vr\to -\infty$ in both sides of the preceding inequality,
\eqref{e:liminf56} and \eqref{eq:0infty}
imply the first limit of \eqref{eq:inftylimitzero}. 
\end{proof}
\noindent
In case of overdamping the cutoff profile can be calculated explicitly. 
\begin{corollary}[Wave cutoff profile for $p\gqq 1$, in case of overdamping $(\gamma /2)^2 > \la_1$]\label{cor:profilewave1}
Assume Hypotheses~\ref{h:sadjwave} and \ref{h:moment} for some $p\gqq 1$.
We keep the notation of Proposition~\ref{prop:casos} and consider case I).
Then for all $z\in \HL$, $z\neq 0$, $\omega_*:=\omega_*(z)$, $v:=v(z)$ 
and $t_\e$ being as in \eqref{eq:timereal}
we have the following profile cutoff phenomenon with 
the explicit cutoff profile 
\[
\lim\limits_{\e\to 0}
\frac{\Wp(X^\e_{t_\e+\vr}(z),\mu^\e)}{\e}= e^{-\omega_* \vr }|v(z)|=:\mathcal{P}_{z}(\vr)\quad \textrm{ for any }\quad \vr\in \mathbb{R}.
\]
\end{corollary}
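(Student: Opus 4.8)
The plan is to specialize the abstract profile furnished by Theorem~\ref{th:abstwave}~i) to the regime $p\gqq 1$ and then invoke the shift linearity of $\Wp$. Since $p\gqq 1$ we have $1\wedge p = 1$, so the left-hand side of the claimed identity is exactly $\Wp(X^\e_{t_\e+\vr}(z),\mu^\e)/\e$. For $z\in\HL$, $z\neq 0$, with $\omega_*=\omega_*(z)$, $v=v(z)$ as in Case I) of Proposition~\ref{prop:casos}, and $t_\e$ as in \eqref{eq:timereal}, Theorem~\ref{th:abstwave}~i) already gives
\[
\lim_{\e\to 0}\frac{\Wp(X^\e_{t_\e+\vr}(z),\mu^\e)}{\e}=\Wp\big(e^{-\omega_*\vr}v+\widetilde{\mathcal{L}}_\infty,\widetilde{\mathcal{L}}_\infty\big)=\mathcal{P}_{p,z}(\vr),
\]
where $\widetilde{\mathcal{L}}_\infty$ is the stationary limit of the stochastic convolution associated with the wave semigroup $\Sg$. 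It therefore suffices to evaluate this Wasserstein distance explicitly.

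First I would record that $\widetilde{\mathcal{L}}_\infty$ has finite $p$-th moment: this is guaranteed by Hypothesis~\ref{h:moment} together with the exponential stability \eqref{eq:estrella} of $\Sg$ (the same input that produces the invariant measure $\mu^\e$ and the estimate of Lemma~\ref{l:convWp} in the wave setting). Hence $U_1:=\widetilde{\mathcal{L}}_\infty$ is an admissible random vector for the shift linearity identity \eqref{eq:shitflinearity} in Lemma~\ref{lem:properties}~(d), which applies precisely because $p\gqq 1$. Taking the deterministic shift $u_1:=e^{-\omega_*\vr}v(z)\in\mathcal{H}$ then yields
\[
\mathcal{P}_{p,z}(\vr)=\Wp(u_1+U_1,U_1)=|u_1|=\big|e^{-\omega_*\vr}v(z)\big|=e^{-\omega_*\vr}|v(z)|,
\]
where the last equality uses that $\omega_*=\omega_*(z)>0$ and $\vr\in\RR$, so $e^{-\omega_*\vr}>0$. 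This is the asserted explicit profile $\mathcal{P}_z(\vr)$, which is in particular strictly positive and strictly decreasing in $\vr$.

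There is essentially no obstacle here: all the substance of the corollary is already carried by Theorem~\ref{th:abstwave}~i), whose proof rests on the eigensystem computations behind Proposition~\ref{prop:casos} and on the cutoff approximation Lemma~\ref{lem:inecut}; the present statement is the one-line deduction obtained by replacing the generally abstract quantity $\Wp(\,\cdot+\widetilde{\mathcal{L}}_\infty,\widetilde{\mathcal{L}}_\infty)$ by the norm $|\cdot|$ via shift linearity, exactly as in the passage from Theorem~\ref{th:cutoffprofileabstrat} to Corollary~\ref{th:cutoffprofile} in the heat case. The only point worth flagging is that shift linearity is a genuinely nonstandard property of $\Wp$, valid only for $p\gqq 1$; for $p\in(0,1)$ one has merely the two-sided bound \eqref{e:cotauni}, which is the reason the explicit profile is claimed solely in the range $p\gqq 1$.
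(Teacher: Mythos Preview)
Your proof is correct and is essentially identical to the paper's own argument: the paper states that the corollary is a direct consequence of the shift linearity in item (d) of Lemma~\ref{lem:properties} applied to the right-hand side of \eqref{eq:hjk} in Theorem~\ref{th:abstwave}, which is exactly what you do.
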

\begin{proof}
The proof is a direct consequence of item (d) in Lemma~\ref{lem:properties} applied to the right-hand side of \eqref{eq:hjk} in Theorem~\ref{th:abstwave}.
\end{proof}
\noindent
In case of profile cutoff the error can be estimated explicitly. 
\begin{corollary}[Generic profile error estimate]\label{cor:profilewave2}
Let the assumptions of Corollary~\ref{cor:profilewave1} be satisfied and $z\in \HL$, $z\neq 0$.  
Then for all $\vr\in \RR$ and $\e\in (0,1)$ such that $t_\e + \vr\gqq 0$, we have 
\begin{align}
\left|
\frac{\Wp(X^\e_{t_\e+\vr}(z),\mu^\e)}{\e}-\pP_{z}(\vr)\right|\lqq 
\e^{\nicefrac{\la_*}{\omega_*}}\cdot C_*~\mathbb{E}[|\widetilde{\mathcal{L}}_\infty|]~e^{-\la_*\vr}+
\e^{\nicefrac{\beta}{\omega_*}}\cdot C_1(z)~e^{(-\omega_*+\beta) \vr},
% K\left( \e^{\nicefrac{\la_*}{\la_{N^*_{h}}}}+
% \e^{1-\nicefrac{\la_{N_{h}}}{\la_{N^*_{h}}}}|z|\right)
\end{align}
where $C_*, \lambda_*$ are given in \eqref{eq:estrella} and $\beta, C_1(z)$ are given in \eqref{eq:beta1}, \eqref{eq:beta2} and \eqref{eq:C1(z)} below.
\end{corollary}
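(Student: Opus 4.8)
Looking at the statement of Corollary~\ref{cor:profilewave2}, the plan is to follow the same strategy used for the heat equation in Corollary~\ref{cor:errorheat}, namely to split the error into two pieces via the triangle inequality: the discrepancy between the renormalized Wasserstein distance and the deterministic flow term $|\Sg(t_\e+\vr)z/\e|$, which is controlled by Corollary~\ref{cor:inecut090} (its analogue on $\mathcal{H}$), and the discrepancy between $|\Sg(t_\e+\vr)z/\e|$ and the profile $e^{-\omega_*\vr}|v(z)|$, which is exactly the convergence rate in Proposition~\ref{prop:casos}~I).

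First I would invoke the $p\gqq 1$ version of Lemma~\ref{lem:inecut} (Corollary~\ref{cor:inecut090} transported to $\mathcal{H}$, valid since $\Sg$ is asymptotically exponentially stable by \eqref{eq:estrella}) to obtain
\[
\left|\frac{\Wp(X^\e_{t_\e+\vr}(z),\mu^\e)}{\e}-\left|\frac{\Sg(t_\e+\vr)z}{\e}\right|\right|\lqq \Wp(\widetilde{\mathcal{L}}_{t_\e+\vr},\widetilde{\mathcal{L}}_\infty),
\]
and then bound the right-hand side by $C_* e^{-\la_*(t_\e+\vr)}\EE[|\widetilde{\mathcal{L}}_\infty|]$ exactly as in the proof of Lemma~\ref{l:convWp}, using the natural coupling and \eqref{eq:estrella}. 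Since $t_\e = |\ln\e|/\omega_*$ gives $e^{-\la_* t_\e} = \e^{\la_*/\omega_*}$, this first term becomes $\e^{\la_*/\omega_*}\cdot C_*\,\EE[|\widetilde{\mathcal{L}}_\infty|]\,e^{-\la_*\vr}$, matching the claimed bound.

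For the second term, I would use \eqref{eq:trianwave}, which already appears in the proof of Theorem~\ref{th:abstwave}, to write
\[
\left|\frac{\Sg(t_\e+\vr)z}{\e}-e^{-\omega_*\vr}v(z)\right| = e^{-\omega_*\vr}\left|e^{\omega_*(t_\e+\vr)}\Sg(t_\e+\vr)z - v(z)\right|,
\]
and the point is to supply a quantitative rate for the decay of $|e^{\omega_* t}\Sg(t)z - v(z)|$ as $t\to\infty$ in the overdamped case. This is the main obstacle: Proposition~\ref{prop:casos}~I) only asserts the limit exists, so I need to go into (or forward-reference) the spectral decomposition of $\Sg$ on $\HL$ (Lemma~\ref{lem:spectral}) and isolate the slowest non-dominant eigenvalue, call its gap to the dominant one $\beta>0$; then $|e^{\omega_* t}\Sg(t)z - v(z)|\lqq C_1(z)\,e^{-\beta t}$ for a constant $C_1(z)$ depending on the spectral data and the coefficients of $z$, with $\beta$ and $C_1(z)$ precisely the quantities the statement promises to define in \eqref{eq:beta1}, \eqref{eq:beta2}, \eqref{eq:C1(z)}. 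Substituting $t = t_\e+\vr$ and using $e^{-\beta t_\e} = \e^{\beta/\omega_*}$ turns this into $\e^{\beta/\omega_*}\cdot C_1(z)\,e^{(-\omega_*+\beta)\vr}$.

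Finally I would assemble the two estimates by the triangle inequality
\[
\left|\frac{\Wp(X^\e_{t_\e+\vr}(z),\mu^\e)}{\e}-\pP_z(\vr)\right| \lqq \left|\frac{\Wp(X^\e_{t_\e+\vr}(z),\mu^\e)}{\e}-\left|\frac{\Sg(t_\e+\vr)z}{\e}\right|\right| + \left|\frac{\Sg(t_\e+\vr)z}{\e}-e^{-\omega_*\vr}|v(z)|\right|,
\]
noting that $\left|\,|\Sg(t_\e+\vr)z/\e| - e^{-\omega_*\vr}|v(z)|\,\right|\lqq |\Sg(t_\e+\vr)z/\e - e^{-\omega_*\vr}v(z)|$ by the reverse triangle inequality in $\mathcal{H}$, so the second piece above is itself bounded by the spectral-gap estimate. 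Collecting the two contributions yields the stated inequality. The hypothesis $t_\e+\vr\gqq 0$ is exactly what is needed to apply the semigroup bound \eqref{eq:estrella} and the decay estimate at the (nonnegative) time $t_\e+\vr$. The only genuinely new work beyond what already appears in the excerpt is the quantitative spectral-gap bound for $\Sg$ on $\HL$, which I would prove by the same termwise estimate as in \eqref{eq:errordif} but now accounting for the (finitely many) overdamped modes $\hat v_{\pm j}$ with $|j|\lqq j_*$ and the genuinely oscillatory modes $v_j$ with $|j|>j_*$; since $(\gamma/2)^2>\la_1$ the dominant real part is $\omega_* = \ga/2 - \sqrt{\ga^2-4\la_1}/2$ coming from $\hat v_{\pm1}$, and $\beta$ is the gap to the next-slowest real part among all remaining modes.
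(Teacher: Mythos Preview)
Your proposal is correct and follows essentially the same route as the paper's proof: split via the triangle and reverse triangle inequalities, control the first piece by Corollary~\ref{cor:inecut090} together with Lemma~\ref{l:convWp} and \eqref{eq:estrella}, and control the second piece by the identity \eqref{eq:trianwave} combined with the quantitative spectral-gap estimate \eqref{e:errorest}--\eqref{e:errorest1} from Lemma~\ref{lem:spectral}. One small caveat: in the paper $\beta<0$ (not the positive gap you introduce) and $\omega_*=\omega_*(z)$ depends on the initial datum $z$ via the first nonvanishing coefficient in its eigen-expansion, so it need not equal $\gamma/2-\sqrt{\gamma^2-4\la_1}/2$ in general.
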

\begin{proof}
The (standard and inverse) triangle inequality with the help of \eqref{e:error1} in Corollary~\ref{cor:inecut090}, \eqref{e:desigual090}, Lemma~\ref{l:convWp} and \eqref{eq:estrella} imply
\begin{align*}
\left|
\frac{\Wp(X^\e_{t_\e+\vr}(z),\mu^\e)}{\e}-\pP_{z}(\vr)\right|
&\lqq 
\left|
\frac{\Wp(X^\e_{t_\e+\vr}(z),\mu^\e)}{\e}-\left|\frac{\Sg(t_\e+\vr)z}{\e}\right|\right|+\left|\frac{\Sg(t_\e+\vr)z}{\e}- e^{-\omega_* \vr }v(z)\right|\\
&\lqq \Wp(\widetilde{\mathcal{L}}_{t_\e+\vr}, \widetilde{\mathcal{L}}_\infty)+e^{-\vr\omega_*}\left|e^{\omega_* (t_\e+\vr)}\Sg(t_\e+\vr)z-v(z) \right|\\
&\lqq 
C_* e^{-\la_* (t_\e+\vr)}\mathbb{E}[|\widetilde{\mathcal{L}}_\infty|]+
e^{-\vr\omega_*} e^{\beta (t_\e+\vr)} C_1(z)\\
&= C_* \e^{\nicefrac{\la_*}{\omega_*}}\cdot e^{-\la_*\vr}\mathbb{E}[|\widetilde{\mathcal{L}}_\infty|]+
\e^{\nicefrac{\beta}{\omega_*}}\cdot e^{(-\omega_*+\beta) \vr} C_1(z).
\end{align*}
\end{proof}
\noindent Profile and window cutoff both imply the simple cutoff phenomenon. 
\begin{corollary}[Simple cutoff phenomenon]\label{cor:profilewave3}
Assume the hypotheses, notation of Proposition~\ref{prop:casos}. 
Then we have the following
\[
\lim\limits_{\e\to 0}
\frac{\Wp(X^\e_{\delta \cdot t_\e}(z),\mu^\e)}{\e^{1\wedge p}}=
\begin{cases}
\infty & \textrm{ for } \delta\in (0,1),\\
0 & \textrm{ for } \delta\in (1,\infty).
\end{cases}
\]
\end{corollary}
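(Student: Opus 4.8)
The plan is to reduce the claim, exactly as in the proof of Theorem~\ref{th:abstwave}, to the elementary asymptotics of the deterministic quantity $|\Sg(\delta t_\e)z|/\e$, and then to squeeze the Wasserstein term between the two sides of \eqref{e:cotauni}. First I would note that $\delta t_\e\to\infty$ as $\e\to 0$ for every fixed $\delta>0$, so that Lemma~\ref{lem:inecut} applied to the wave semigroup $\Sg$ (whose asymptotic exponential stability is \eqref{eq:estrella}), with $t_\e$ replaced by $\delta t_\e$, yields
\[
\limsup_{\e\to 0}\frac{\Wp(X^\e_{\delta t_\e}(z),\mu^\e)}{\e^{1\wedge p}}=\limsup_{\e\to 0}\Wp\Big(\frac{\Sg(\delta t_\e)z}{\e}+\widetilde{\mathcal{L}}_\infty,\widetilde{\mathcal{L}}_\infty\Big),
\]
together with the analogous identity for the $\liminf$. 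By \eqref{e:cotauni} in item (d) of Lemma~\ref{lem:properties}, the right-hand side lies between $\max\{|\Sg(\delta t_\e)z/\e|^{1\wedge p}-2\EE[|\widetilde{\mathcal{L}}_\infty|^{1\wedge p}],0\}$ and $|\Sg(\delta t_\e)z/\e|^{1\wedge p}$, where $\EE[|\widetilde{\mathcal{L}}_\infty|^{1\wedge p}]<\infty$ under Hypothesis~\ref{h:moment}. Hence everything comes down to showing $|\Sg(\delta t_\e)z/\e|\to\infty$ for $\delta\in(0,1)$ and $|\Sg(\delta t_\e)z/\e|\to 0$ for $\delta\in(1,\infty)$.

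For this deterministic step I would use that in both regimes of Proposition~\ref{prop:casos} the time scale is normalized so that $e^{-\omega_* t_\e}=\e$ (with $\omega_*=\omega_*(z)$ in Case~I) and $\omega_*=\gamma/2$ in Case~II)), whence
\[
\Big|\frac{\Sg(\delta t_\e)z}{\e}\Big|=\e^{\delta-1}\,\big|e^{\omega_*\delta t_\e}\Sg(\delta t_\e)z\big|.
\]
Since $\delta t_\e\to\infty$, the factor $|e^{\omega_*\delta t_\e}\Sg(\delta t_\e)z|$ converges to $|v(z)|\in(0,\infty)$ in Case~I) by \eqref{e:wavejaraA}, whereas in Case~II) it stays, for all small $\e$, between two strictly positive constants, because $|e^{\omega_* s}\Sg(s)z-v(s,z)|\to 0$ and $0<\liminf_{s\to\infty}|v(s,z)|\le\limsup_{s\to\infty}|v(s,z)|<\infty$ by \eqref{e:wavejaraB}--\eqref{eq:cotasinfsup}. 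In either case $|\Sg(\delta t_\e)z/\e|$ is comparable to $\e^{\delta-1}$, which tends to $\infty$ when $\delta\in(0,1)$ and to $0$ when $\delta\in(1,\infty)$.

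Finally I would combine the two steps: for $\delta\in(1,\infty)$ the upper bound in \eqref{e:cotauni} forces the Wasserstein term to vanish, while for $\delta\in(0,1)$ the lower bound gives $\Wp(\Sg(\delta t_\e)z/\e+\widetilde{\mathcal{L}}_\infty,\widetilde{\mathcal{L}}_\infty)\gqq|\Sg(\delta t_\e)z/\e|^{1\wedge p}-2\EE[|\widetilde{\mathcal{L}}_\infty|^{1\wedge p}]\to\infty$, since the first term diverges and the second is a fixed finite constant; plugging these into the $\limsup$/$\liminf$ identities of the first step closes the argument. The only point that needs a little care is Case~II), where the ``profile'' $v(t,z)$ genuinely oscillates and has no limit, so the shift-linearity route used for profile cutoff is unavailable and one must instead run the estimate through the cruder two-sided bound \eqref{e:cotauni}, relying only on the sandwiching \eqref{eq:cotasinfsup}. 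This is exactly the mechanism already used in the proof of case~ii) of Theorem~\ref{th:abstwave}, so no new estimate is required; apart from this the argument is virtually identical to the proof of Corollary~3.2 in~\cite{BHPWA}.
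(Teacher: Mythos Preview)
Your proposal is correct and follows exactly the approach the paper indicates: the paper's own proof is just the one-line remark that the argument is ``virtually identical to the proof of Corollary~3.2 in~\cite{BHPWA} with $e^{-\mathcal{Q}t}$ being replaced by $\Sg(t)$,'' and you have faithfully spelled out that argument, including the necessary two-sided squeeze via \eqref{e:cotauni} in Case~II) where no true profile exists.
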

\noindent
The proof is virtually identical to the proof of Corollary~3.2 in~\cite{BHPWA} with $e^{-\mathcal{Q}t}$ being replaced by $\Sg(t)$.
Analogously to Corollary~\ref{cor:largedataheat}, we have the following cutoff phenomenon for large initial data.
\begin{corollary}[Large initial data for the wave equation]
Assume the general hypotheses and the notation of Theorem~\ref{th:abstwave} for some $p>0$. 
\begin{enumerate}
\item[a)] Under the specific assumptions of item i) of Theorem~\ref{th:abstwave} for some $z\in \mathcal{H}_\Lambda$, $z\neq 0$, it follows that
\begin{equation*}
\lim\limits_{\e\to 0}
{\Wp(X^1_{t_\e+\vr}(z/\e),\mu^1)}=\mathcal{P}_{p,z}(\vr)\quad 
\textrm{ for any } \quad \vr\in \mathbb{R}.
\end{equation*}
\item[b)] 
Under the specific assumptions of item ii) of Theorem~\ref{th:abstwave} for some $z\in \mathcal{H}_\Lambda$, $z\neq 0$, it follows that
\begin{equation*}
\lim\limits_{\vr\to -\infty}\liminf\limits_{\e\to 0}
{\Wp(X^1_{t_\e+\vr}(z/\e),\mu^1)}=\infty
\quad \textrm{ and } \quad
\lim\limits_{\vr\to \infty}\limsup\limits_{\e\to 0}
{\Wp(X^1_{t_\e+\vr}(z/\e),\mu^1)}=0.
\end{equation*}
\end{enumerate}
The analogues of Corollary~\ref{cor:profilewave1},  Corollary~\ref{cor:profilewave2} and Corollary~\ref{cor:profilewave3} remain valid when 
$\frac{\Wp(X^\e_{\cdot}(z),\mu^\e)}{{\e^{1\wedge p}}}$ is replaced by $\Wp(X^{1}_{\cdot}(z/\e),\mu^1)$.
\end{corollary}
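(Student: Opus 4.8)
The plan is to reduce every assertion to the reparametrization identity recorded in Corollary~\ref{cor:largedata}, exactly as in the heat-equation analogue Corollary~\ref{cor:largedataheat}. First I would observe that the damped stochastic wave equation \eqref{e:wave} on $\mathcal{H}$ is a particular instance of the abstract Ornstein--Uhlenbeck equation \eqref{e:OU}, with generator the matrix-valued wave operator $A$ and driving noise $\mathcal{L}$: by \eqref{eq:estrella} the semigroup $\Sg$ satisfies Hypothesis~\ref{h:expsta}, and Hypothesis~\ref{h:moment} is assumed, so all the results of Section~\ref{s:HB+L} apply verbatim. In particular, Corollary~\ref{cor:largedata} gives, for every initial datum $z\in\mathcal{H}$, every $t\gqq 0$ and every $\e\in(0,1)$,
\[
\frac{\Wp(X^\e_t(z),\mu^\e)}{\e^{1\wedge p}}=\Wp\big(X^1_t(z/\e),\mu^1\big),
\]
which, together with the homogeneity of $\Wp$ (item (c) of Lemma~\ref{lem:properties}) and the scaling $\mu^\e\stackrel{d}{=}\e\widetilde{\mathcal{L}}_\infty$, is the only structural input needed.

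For part a) I would apply this identity with $t=t_\e+\vr$, where $t_\e$ is as in \eqref{eq:timereal}. Since $t_\e\to\infty$ as $\e\to 0$, for each fixed $\vr\in\RR$ one has $t_\e+\vr\gqq 0$ for all sufficiently small $\e$, so $\Wp(X^1_{t_\e+\vr}(z/\e),\mu^1)=\Wp(X^\e_{t_\e+\vr}(z),\mu^\e)/\e^{1\wedge p}$ for those $\e$. Letting $\e\to 0$ and inserting the profile cutoff \eqref{eq:hjk} from item i) of Theorem~\ref{th:abstwave} yields $\lim_{\e\to 0}\Wp(X^1_{t_\e+\vr}(z/\e),\mu^1)=\mathcal{P}_{p,z}(\vr)$, which is the claim.

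For part b) the same identity (now with $t_\e$ as in \eqref{eq:timecomplejo}) shows that, for each fixed $\vr$, the quantities $\liminf_{\e\to 0}\Wp(X^1_{t_\e+\vr}(z/\e),\mu^1)$ and $\limsup_{\e\to 0}\Wp(X^1_{t_\e+\vr}(z/\e),\mu^1)$ coincide with $\liminf_{\e\to 0}\Wp(X^\e_{t_\e+\vr}(z),\mu^\e)/\e^{1\wedge p}$ and $\limsup_{\e\to 0}\Wp(X^\e_{t_\e+\vr}(z),\mu^\e)/\e^{1\wedge p}$, respectively. Sending $\vr\to-\infty$ in the first and $\vr\to+\infty$ in the second, and invoking the window cutoff \eqref{eq:inftylimitzero} of item ii) of Theorem~\ref{th:abstwave}, gives the two limits in b). The stated analogues of Corollaries~\ref{cor:profilewave1}, \ref{cor:profilewave2} and \ref{cor:profilewave3} then follow by the same substitution, since each of those proofs uses only \eqref{e:error1}, \eqref{e:desigual090}, Lemma~\ref{l:convWp} and item (d) of Lemma~\ref{lem:properties}, all of which survive the replacement of $\Wp(X^\e_\cdot(z),\mu^\e)/\e^{1\wedge p}$ by $\Wp(X^1_\cdot(z/\e),\mu^1)$ via Corollary~\ref{cor:largedata}.

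I do not expect a genuinely difficult step here: the whole argument is bookkeeping around the scaling identity. The only point requiring a word of care is the constraint $t\gqq 0$ in Corollary~\ref{cor:largedata}, which for the time scales $t_\e+\vr$ holds only for $\e$ small; this is harmless since we pass to the limit $\e\to 0$. Checking that Hypotheses~\ref{h:expsta} and \ref{h:moment} genuinely transfer to the wave setting — done above via \eqref{eq:estrella} — is what legitimizes using the Section~\ref{s:HB+L} machinery in this context, and is the only thing one must not skip.
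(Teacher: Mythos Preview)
Your proposal is correct and follows exactly the same approach as the paper, which simply says ``It follows directly from Corollary~\ref{cor:largedata}.'' You have merely spelled out the details of that reduction, including the verification that Hypotheses~\ref{h:expsta} and \ref{h:moment} hold in the wave setting via \eqref{eq:estrella}.
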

\begin{proof}
It follows directly from Corollary~\ref{cor:largedata}.
\end{proof}
\noindent
The following two lemmas give the proof of Proposition~\ref{prop:casos}.
\begin{lemma}[Generic overdamping, Case I) of Proposition~\ref{prop:casos}]\label{lem:spectral}
Assume Hypotheses~\ref{h:sadjwave} and \ref{h:nonresonance}. 
Moreover, let $\gamma^2>4\la_1$.
Then for any $z\in \mathcal{H}_{\Lambda}$, $z\neq 0$, there exists $\omega_*>0$ and $v=v(z)\in \mathcal{H}$, $v\neq 0$,  defined in \eqref{eq:omegaestrella1} and \eqref{eq:omegaestrella2} such that
\[
\lim\limits_{t\to \infty}
e^{\omega_* t}\Sg(t)z=v(z).
\]
\end{lemma}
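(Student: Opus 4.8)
The strategy is to diagonalize the wave semigroup $\Sg$ in the eigenbasis $(\hat v_j)_{|j|\lqq j_*}\cup(v_j)_{|j|>j_*}$ of $A$ computed above, and then mimic the argument of Lemma~\ref{l:jaraheat}: among all modes present in the expansion of $z$, the one(s) with the slowest exponential decay dominate, and the exponent $\omega_*$ is precisely that slowest rate. Concretely, for $z\in \HL$ write $z=\sum_{|j|\lqq j_*}a_j\hat v_j+\sum_{|j|>j_*}b_j v_j$. Applying $\Sg(t)$ multiplies $\hat v_{\pm k}$ by $e^{\hat\omega_{\pm k}t}$ and $v_{\pm k}$ by $e^{\omega_{\pm k}t}$, where $\hat\omega_{\pm k}=-\frac{\gamma}{2}\pm\frac{\sqrt{\gamma^2-4\la_k}}{2}$ and $\Rea(\omega_{\pm k})=-\frac{\gamma}{2}$. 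Since $\gamma^2>4\la_1$, the overdamped branch is nonempty and the \emph{largest} (least negative) real part among all eigenvalues is $\hat\omega_{+1}=-\frac{\gamma}{2}+\frac{\sqrt{\gamma^2-4\la_1}}{2}$, which is strictly greater than $-\frac{\gamma}{2}$ and strictly greater than $\hat\omega_{+k}$ for $k\gqq 2$ with $\gamma^2>4\la_k$ (by monotonicity of $\la_k$ and Hypothesis~\ref{h:nonresonance}); moreover the overdamped eigenvalues are all simple (the spectrum $(\la_k)$ is simple by Hypothesis~\ref{h:sadjwave}).

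The key definitions: let $\kK_z$ be the set of indices $j$ (in the two families) whose coefficient in $z$ is nonzero, let $\omega_*:=-\max\{\Rea(\text{eigenvalue of index }j):j\in\kK_z\}>0$, and let $v=v(z)$ be the (finite) sum of those terms $a_j\hat v_j$ realizing this maximum — this is the content of the formulas \eqref{eq:omegaestrella1}, \eqref{eq:omegaestrella2} referenced in the statement. Note $v\neq 0$ by construction, and $v\in\mathcal{H}$ trivially since it is a finite sum of eigenvectors (only overdamped modes can realize the maximal rate $-\gamma/2+\text{positive}$, and the overdamped family is finite, indexed by $|j|\lqq j_*$). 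Then
\[
e^{\omega_* t}\Sg(t)z - v = \sum_{j\in\kK_z:\ \Rea(\cdot)<-\omega_*} e^{(\omega_*+\text{eigenvalue}_j)t}\,(\text{coeff}_j)\,(\text{eigenvector}_j),
\]
and every summand decays like $e^{-\delta_j t}$ with $\delta_j>0$. The main estimate is to bound the $\mathcal{H}$-norm of this tail: splitting into the finitely many overdamped leftover modes (trivially bounded, exponentially small) and the infinitely many underdamped modes, for the latter all real parts equal $-\gamma/2<-\omega_*$, so the prefactor $e^{(\omega_*-\gamma/2)t}$ is a uniform exponentially small scalar, while the remaining series $\sum_{|j|>j_*}b_j e^{\pm\ii\theta_j t}v_j$ has $\mathcal{H}$-norm controlled by $\sum_{|j|>j_*}|b_j|^2(1+\la_{|j|}+|\omega_j|^2)\lesssim \sum|b_j|^2\la_{|j|}<\infty$, which is exactly the defining summability condition of $\HL$ (using \eqref{eq:innerproducts} for the diagonal norms and \eqref{eq:eigenbase} for the off-diagonal $\dv v_{-k},v_k\dw$ terms to show the Gram matrix is block-diagonal in $|j|$ with uniformly bounded blocks). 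Hence the tail norm is $\lqq C e^{(\omega_*-\gamma/2)t}\,\|z\|_{\HL} + (\text{finite overdamped remainder})\to 0$.

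**Main obstacle.** The delicate point is the underdamped part: although each underdamped eigenvalue has real part exactly $-\gamma/2$, the eigenvectors $v_{\pm k}$ are \emph{not orthonormal} — we only have the block structure \eqref{eq:innerproducts}, and individually $|v_k|^2=1+\la_{|k|}+|\omega_k|^2$ grows with $k$. So one cannot simply say "the underdamped part decays like $e^{(\omega_*-\gamma/2)t}$ times a bounded vector" without first checking that $\sum_{|j|>j_*}b_j e^{\omega_j t}v_j$ has $\mathcal{H}$-norm uniformly bounded in $t$. This is where the $\HL$-membership is essential: the condition $\sum|b_j|^2\la_{|j|}<\infty$ is tuned precisely so that, after accounting for $|v_j|^2\asymp\la_{|j|}$ and the cross terms within each $|j|$-block (bounded since $|\omega_j^2|=\la_{|j|}$ and the $1+\la_{|j|}+\omega_j^2$ and $1+\la_{|j|}+|\omega_j|^2$ entries differ by an $O(1)$ oscillation in the imaginary direction), the $2\times2$ Gram block has operator norm $O(\la_{|j|})$, so the full underdamped sum has squared norm $\lqq C\sum|b_j|^2\la_{|j|}<\infty$ uniformly in $t$. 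Once this uniform bound is in hand, multiplying by the genuinely exponentially small scalar $e^{(\omega_*-\gamma/2)t}$ closes the argument, and $\lim_{t\to\infty}e^{\omega_*t}\Sg(t)z=v(z)$ follows. I would organize the write-up as: (1) spectral decomposition of $\Sg(t)z$; (2) definition of $\omega_*$, $v(z)$ and verification $v\neq 0$; (3) the uniform-in-$t$ bound on the underdamped sum via the block-Gram estimate and $\HL$-summability; (4) the exponential decay of the overdamped remainder; (5) combine.
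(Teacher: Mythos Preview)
Your proposal follows essentially the same strategy as the paper: expand $z$ in the eigenbasis, identify the mode with the largest (least negative) real exponent as $v(z)$ with rate $\omega_*$, and show the remainder vanishes. Two remarks are in order.

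\textbf{An improvement over the paper.} Your handling of the underdamped tail is actually more careful than the paper's own argument. In the paper the underdamped remainder is bounded by the crude triangle inequality
\[
e^{\delta t}\sum_{|j|>j_*}\big(|b_j|\,|v_j|+|b_{-j}|\,|v_{-j}|\big),
\]
with $\delta=\omega_*-\gamma/2<0$. But $|v_j|^2=1+2\la_{|j|}\asymp\la_{|j|}$, and the $\HL$-condition only guarantees $\sum_{|j|>j_*}|b_j|^2\la_{|j|}<\infty$, which does \emph{not} imply $\sum_{|j|>j_*}|b_j|\sqrt{\la_{|j|}}<\infty$ in general. Your block-Gram $L^2$ estimate (which is precisely what the paper itself uses later in the proof of Lemma~\ref{lem:spectral2}, see \eqref{eq:modulo}) gives the uniform-in-$t$ bound on $\big|\sum_{|j|>j_*}e^{\pm\ii\theta_j t}b_j v_j\big|$ directly from $\sum|b_j|^2\la_{|j|}<\infty$, and is the right way to close the argument.

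\textbf{A case you must flag.} You implicitly assume that some overdamped coefficient $a_j$, $|j|\lqq j_*$, is nonzero when you write ``only overdamped modes can realize the maximal rate $-\gamma/2+\text{positive}$''. If all $a_j=0$, the maximal real part among the active modes is exactly $-\gamma/2$, realized by infinitely many oscillatory underdamped eigenvectors, and no fixed limit $v(z)$ exists (one gets the time-dependent $v(t,z)$ of Case~II instead). The paper makes this assumption explicit at the outset of its proof and defers the purely underdamped situation to Lemma~\ref{lem:spectral2}; you should add the same caveat.
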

\begin{proof}[Proof of Lemma~\ref{lem:spectral}]
For  any $z\in \HL$ write
\[
z=\sum\limits_{j=1}^{j_*}( a_{j}\hat v_{j}+ a_{-j}\hat v_{-j})+\sum_{j=j_*+1}^{\infty}(b_{j}v_{j}+b_{-j}v_{-j}),
\]
where $j_*$ is given in  
\eqref{e:jotaestrella}.
Assume (otherwise  Case II) in Lemma~\ref{lem:spectral2} is satisfied) 
\begin{equation}\label{eq:sumanozero}
\sum\limits_{j=1}^{j_*}(|a_{j}|+| a_{-j}|)>0.
\end{equation}
Since \eqref{eq:sumanozero},
we define
\begin{align}
j_0&:=\max\{j\in\{-j_*,\ldots,j_*\}\setminus\{0\}:| a_j|>0\}.
\end{align}
For any $t\gqq 0$, we note that
\[
\Sg(t)z=\sum\limits_{j=1}^{j_0}(e^{\hat \omega_{j}t} a_{j}\hat v_{1}+e^{\hat \omega_{-j}t} a_{-j}\hat v_{-j})+\sum_{j=j_*+1}^{\infty}(e^{\omega_{j}t}b_{j}v_{j}+e^{\omega_{-j}t}b_{-j}v_{-j}).
\]
Note that $0>\hat\omega_j>\hat\omega_{j+1}$ and $\hat \omega_{-j}<\hat\omega_{-(j+1)}<0$ 
for $j\in \{{1,\ldots,j_0-1\}}$. Also, $\hat\omega_j>\hat\omega_{-k}$ for any $j,k\in \{{1,\ldots,j_0-1\}}$. We distinguish the following two cases.
\begin{enumerate}
 \item[a)] In case of $J:=\{j\in \{1,\ldots ,j_1\}:a_{j}=0\}\neq \emptyset$, let $j_1:=\min(J)$, 
 \begin{equation}\label{eq:omegaestrella1}
\omega_*=-\hat{\omega}_{j_1}>0 \qquad \mbox{ and } \qquad v(z) =  a_{j_1} \hat v_{j_1}.
\end{equation}
Hence 
\begin{equation}\label{eq:form91}
\lim\limits_{t\to \infty}\exp\left(t\omega_*\right)\Sg(t)z=v(z)\neq 0.
\end{equation}
\item[b)] In case of $J=\emptyset$, let
$j_2=\max\{j\in \{1,\ldots j_1\}:a_{-j}\neq 0\}$,
 \begin{equation}\label{eq:omegaestrella2}
\omega_*=-\hat \omega_{-j_2}>0\qquad \mbox{ and } \qquad v(z) =  a_{-j_2} \hat v_{-j_2}.
\end{equation}
Hence 
\begin{equation}\label{eq:form29}
\lim\limits_{t\to \infty}\exp\left(t\omega_*\right)\Sg(t)z=v(z)\neq 0.
\end{equation}
\end{enumerate}
In Case a) we denote 
\begin{align*}
&\delta := -\frac{\sqrt{\gamma^2-4\la_{j_1}}}{2} = \omega_*+\mathsf{Re}(\omega_{\pm j}) \quad \mbox{ for all }\quad j\gqq j_*+1,  \\
&\delta_{-}:=\max\limits_{1\lqq j\lqq j_0}\{\omega_*+\hat{\omega}_{-j}\}<0, \qquad \mbox{ and }\quad \delta_{+}:=\max\limits_{j_1+1\lqq j\lqq j_0}\{\omega_*+\hat{\omega}_{j}\}<0.
\end{align*}
Then for 
\begin{equation}\label{eq:beta1} \beta:=\max\{\delta_{-},\delta_{+},\delta\}<0 \end{equation} we have 
\begin{align}
\left|\exp\left(t\omega_*\right)\Sg(t)z- a_{j_1} \hat v_{j_1}\right|
&\lqq 
\sum\limits_{j=1}^{j_0}(e^{\delta_{+}t}|a_{j}||\hat v_{1}|+e^{ \delta_{-}t}| a_{-j}||\hat v_{-j}|) +e^{\delta t}
\sum_{j=j_*+1}^{\infty}(|b_{j}||v_{j}|+|b_{-j}||v_{-j}|)\lqq C_1(z) e^{\beta t},\label{e:errorest}
\end{align}
where 
\begin{equation}\label{eq:C1(z)}
C_1(z) := \left(\sum\limits_{j=1}^{j_0}| a_{j}||\hat v_{1}|+| a_{-j}||\hat v_{-j}|)+\sum_{j=j_*+1}^{\infty}(|b_{j}||v_{j}|+|b_{-j}||v_{-j}|)\right).
\end{equation}
In Case b) we obtain for 
\begin{align}
&\delta := -\frac{\sqrt{\gamma^2-4\la_{j_2}}}{2} = \omega_*+\mathsf{Re}(\omega_{\pm j}) \quad \mbox{ for all }\quad j\gqq j_*+1,\nonumber\\
&\delta_{-}:=\max\limits_{j_2+1\lqq j\lqq j_0}\{\omega_*+\hat{\omega}_{-j}\}<0,\qquad 
% \delta_{+}&:=\max\limits_{j_1+1\lqq j\lqq j_0}\{\omega_*+\hat{\omega}_{j}\}<0.
\mbox{ and }\quad \beta:=\max\{\delta_{-},\delta_{+},\delta\}<0\label{eq:beta2}
\end{align}
the estimate 
\begin{align}
\left|\exp\left(t\omega_*\right)\Sg(t)z- a_{-j_2} \hat v_{-j_2}\right|
&\lqq C_1(z) e^{\beta t}.
\label{e:errorest1}
\end{align}
\end{proof}
\noindent
The case of subcritical damping $\gamma^2<4\la_1$ treats the situation $j_*=0$ given in \eqref{e:jotaestrella}.

\begin{lemma}[Generic subcritical damping, Case II) of Proposition~\ref{prop:casos}]\label{lem:spectral2}
Assume Hypotheses~\ref{h:sadjwave} and \ref{h:nonresonance}.
Moreover, let $\gamma^2<4\la_1$. 
Then for any $z\in \mathcal{H}_{\Lambda}$, $z\neq 0$, there exists $v=v(t, z)\in \mathcal{H}$ such that
\[
\lim\limits_{t\to \infty}
|e^{(\gamma/2) t}\Sg(t)z-v(t,z)|=0,
\]
where $v(t,z)$ is given in \eqref{eq:v(t,z)}. In addition, we have 
\begin{equation}\label{eq:infsup}
0<\liminf\limits_{t\to \infty}|v(t,z)|\lqq 
\limsup\limits_{t\to \infty}|v(t,z)|<\infty.
\end{equation}
\end{lemma}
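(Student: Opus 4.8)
The plan is to exploit that subcritical damping $\gamma^2<4\la_1$ forces $j_*=0$ in \eqref{e:jotaestrella}, so every eigenvalue of $A$ has the form $\omega_{\pm k}=-\gamma/2\pm\ii\sqrt{4\la_k-\gamma^2}/2$ with the common real part $-\gamma/2$, and every $z\in\HL$, $z\neq0$, admits an expansion $z=\sum_{j\in\mathbb{Z}_*}b_j v_j$ with $\sum_{j\in\mathbb{Z}_*}|b_j|^2\la_{|j|}<\infty$. First I would record, from the inner-product identities \eqref{eq:innerproducts} (in particular $\dv v_j,v_k\dw=0$ whenever $|j|\neq|k|$, and $\dv v_k,v_k\dw=1+2\la_k$), the block-diagonal structure of the $\mathcal{H}$-norm; combined with the two-sided estimate of the $2\times2$ diagonal blocks obtained in the next paragraph this yields a norm equivalence $\|\sum_j c_j v_j\|_{\mathcal{H}}^2\asymp\sum_j|c_j|^2\la_{|j|}$, whence $\Sg(t)z=\sum_{j\in\mathbb{Z}_*}e^{\omega_j t}b_j v_j$ converges in $\mathcal{H}$ for each $t\gqq0$. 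I then \emph{define}
\begin{equation}\label{eq:v(t,z)}
v(t,z):=e^{(\gamma/2)t}\Sg(t)z=\sum_{j\in\mathbb{Z}_*}\exp\Big(\ii\,\mathrm{sign}(j)\tfrac{\sqrt{4\la_{|j|}-\gamma^2}}{2}\,t\Big)\,b_j v_j,
\end{equation}
so that $|e^{(\gamma/2)t}\Sg(t)z-v(t,z)|\equiv0$ and the convergence claim of the lemma is immediate; the content reduces entirely to the two-sided bound \eqref{eq:infsup}.

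For \eqref{eq:infsup} I would compute $|v(t,z)|^2$ directly from \eqref{eq:innerproducts}. All cross terms with $|j|\neq|k|$ vanish, and since reality of $z\in\mathcal{H}$ forces $b_{-k}=\bar b_k$ (hence $|b_{-k}|=|b_k|$), what remains is a sum over $k\gqq1$ of blocks $2|b_k|^2\alpha_k+2\mathsf{Re}(\chi_k(t)\,b_k^2\,\zeta_k)$ with $\alpha_k:=\dv v_k,v_k\dw=1+2\la_k$, $\zeta_k:=\dv v_k,v_{-k}\dw=1+\la_k+\omega_k^2$, and $|\chi_k(t)|=1$. Bounding the oscillatory term by $\pm2|b_k|^2|\zeta_k|$ gives, uniformly in $t$,
\begin{equation*}
2\sum_{k\gqq1}|b_k|^2(\alpha_k-|\zeta_k|)\ \lqq\ |v(t,z)|^2\ \lqq\ 2\sum_{k\gqq1}|b_k|^2(\alpha_k+|\zeta_k|).
\end{equation*}
The decisive step is the computation $|\omega_k|^2=\la_k$ and $|\zeta_k|^2=1+\gamma^2(1+\la_k)$, giving $\alpha_k^2-|\zeta_k|^2=(\la_k+1)(4\la_k-\gamma^2)>0$ precisely because $\gamma^2<4\la_1\lqq4\la_k$; hence $\alpha_k-|\zeta_k|=(\la_k+1)(4\la_k-\gamma^2)/(\alpha_k+|\zeta_k|)$ is positive for every $k$, tends to $+\infty$, and satisfies $c_0:=\inf_{k\gqq1}(\alpha_k-|\zeta_k|)>0$. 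The lower bound then reads $|v(t,z)|^2\gqq c_0\sum_{j\in\mathbb{Z}_*}|b_j|^2>0$ (strict since $z\neq0$), and the upper bound, using $\alpha_k+|\zeta_k|\lqq2\alpha_k$ and $\la_k\gqq\la_1$, reads $|v(t,z)|^2\lqq C\sum_{j\in\mathbb{Z}_*}|b_j|^2\la_{|j|}<\infty$ by $z\in\HL$. Both bounds hold uniformly in $t$, so passing to $\liminf_{t\to\infty}$ and $\limsup_{t\to\infty}$ yields \eqref{eq:infsup}.

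I expect the main obstacle to be the bookkeeping around the series representation of $\Sg(t)z$ in $\mathcal{H}$: one must check that the block-diagonal (in $|j|$) identities \eqref{eq:innerproducts} genuinely upgrade to the norm equivalence $\|\sum_j c_j v_j\|_{\mathcal{H}}^2\asymp\sum_j|c_j|^2\la_{|j|}$ with constants uniform over the modes, which is what legitimizes both the convergence of the defining series \eqref{eq:v(t,z)} and the uniform-in-$t$ estimates above. That equivalence rests on the \emph{same} inequality $\alpha_k>|\zeta_k|$ — now in the quantitative form $\alpha_k-|\zeta_k|\gqq c_0>0$ (in fact $\gqq c\,\la_k$) — which also drives the conclusion $\liminf_{t\to\infty}|v(t,z)|>0$, so the subcriticality hypothesis $\gamma^2<4\la_1$ is used in an essential way. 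Everything else is routine computation.
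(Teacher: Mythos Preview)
Your argument is correct, and for the liminf part it is genuinely different from the paper's. Both you and the paper set $v(t,z):=e^{(\gamma/2)t}\Sg(t)z$, so the first assertion is trivial in either case, and for the limsup both of you bound the oscillatory cross terms by their absolute value and use $z\in\HL$. The divergence is in the liminf. The paper does not exploit the uniform spectral gap between the diagonal and off-diagonal Gram entries; instead it isolates a single index $j_0$ with $|b_{j_0}|^2+|b_{-j_0}|^2>0$, uses the block orthogonality \eqref{eq:eigenbase} to bound $|v(t,z)|^2$ from below by the norm of that one block, and then argues by contradiction: if $|e^{\ii\theta_{j_0}t_k}b_{j_0}v_{j_0}+e^{-\ii\theta_{j_0}t_k}b_{-j_0}\bar v_{j_0}|\to 0$ along a sequence, Bolzano--Weierstrass and the linear independence of $v_{j_0},\bar v_{j_0}$ force $b_{j_0}=b_{-j_0}=0$. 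Your route is more quantitative: the identity $\alpha_k^2-|\zeta_k|^2=(1+\la_k)(4\la_k-\gamma^2)>0$ turns the subcriticality hypothesis directly into a uniform lower bound $\alpha_k-|\zeta_k|\gqq c_0>0$ on every $2\times2$ block, yielding a bound on the full sum that is uniform in $t$ (not merely a positive liminf). This buys you the norm equivalence you need for the series convergence and the inequality \eqref{eq:infsup} in one stroke; the paper's soft argument avoids that computation but gives only the liminf statement. One remark: the reality constraint $b_{-k}=\bar b_k$ is correct but not actually needed---your block estimate works verbatim with $|b_k|^2+|b_{-k}|^2$ in place of $2|b_k|^2$, since $2|b_k||b_{-k}|\lqq|b_k|^2+|b_{-k}|^2$ handles the cross term just as well.
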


\begin{proof}[Proof of Lemma~\ref{lem:spectral2}] 
For any $z\in \HL$ we write
\begin{align*}
z&=\sum_{j\in \NN} b_j v_{j}
+\sum_{j\in \NN}b_{-j} v_{-j}=\sum_{j\in \NN} b_j v_{j}
+\sum_{j\in \NN} b_{-j} \bar v_{j}.
\end{align*}
Hence, for any $t\gqq 0$ we have
\begin{align}\label{eq:v(t,z)}
e^{t(\gamma/2)}\Sg(t)z&=\sum_{j\in \NN}e^{\ii \theta_j t}  b_j v_{j}
+\sum_{j\in \NN}e^{-\ii\theta_{j} t}b_{-j} v_{-j}=\sum_{j\in \NN}e^{\ii \theta_j t} b_j v_{j}
+\sum_{j\in \NN}e^{-\ii\theta_{j} t} b_{-j} \bar v_{j}=: v(t, z),
\end{align}
where $\theta_{\pm j}$ are the arguments of $\omega_{\pm _j}$.
A straightforward calculation yields
\begin{align*}
\left|e^{t(\gamma/2)}\Sg(t)z\right|^2
&=\dv\sum_{j\in \NN}e^{\ii \theta_j t} b_j v_{j}
+\sum_{j\in \NN}e^{-\ii\theta_{j} t} b_{-j} \bar v_{j},
\sum_{k\in \NN}e^{\ii \theta_k t} b_k v_{k}
+\sum_{k\in \NN}e^{-\ii\theta_{k} t} b_{-k} \bar v_{k}\dw\\
&\hspace{-2.2cm}=\dv\sum_{j\in \NN}e^{\ii \theta_j t} b_j v_{j},
\sum_{k\in \NN}e^{\ii \theta_k t} b_k v_{k}
\dw+\dv\sum_{j\in \NN}e^{-\ii \theta_j t} b_{-j} \bar v_{j},
\sum_{k\in \NN}e^{-\ii \theta_k t} b_{-k} \bar v_{k}\dw +2\Rea \Big(\dv \sum_{j\in \NN}e^{\ii\theta_{j} t} b_j v_{j},\sum_{k\in \NN}e^{-\ii \theta_k t} {b}_{-k} \bar{v}_{k} \dw\Big).
\end{align*}
Then the explicit computations of the inner products \eqref{eq:innerproducts} yield for any $z\in \HL$ 
\begin{equation}\label{eq:modulo}
\begin{split}
|e^{t(\gamma/2)}\Sg(t)z|^2=&\sum_{j\in \NN} |b_j|^2(1+\la_{j}+|\omega_{j}|^2) +\sum_{j\in \NN}|b_{-j}|^2(1+\la_{j}+|\omega_{j}|^2)+2\Rea \Big(\sum_{j\in \NN}e^{2\ii\theta_{j} t} b_j\bar b_{-j} (1+\la_j+\omega^2_j)\Big).
\end{split}
\end{equation}
We observe that $|\omega_j|^2=\la_j$ for all $j\in \NN$. By Young's inequality we have 
\begin{align}
&\label{eq:modulo0}\\
&|e^{2\ii\theta_{j} t} b_j\bar b_{-j} (1+\la_j+\omega^2_j)|&\lqq |b_j||b_{-j}|(1+\la_j+|\omega_j|^2)
\lqq |b_j||b_{-j}||(1+2\la_j)\lqq \frac{|b_j|^2(1+2\la_j)}{2}+\frac{|b_{-j}|^2(1+2\la_j)}{2}.\nonumber
\end{align}
Since $z\in \HL$, the right-hand side \eqref{eq:modulo0} is sumable over all $j\in \NN$ and hence 
the right-hand side of \eqref{eq:modulo} is finite. In addition,
\begin{align}\label{eq:limsonda}
\limsup\limits_{t\to \infty}|e^{t(\gamma/2)}\Sg(t)z|^2\lqq 3\sum_{j\gqq 1} |b_j|^2(1+\la_j) +3\sum_{j\gqq 1}|b_{-j}|^2(1+\la_j)<\infty.
\end{align}
In the sequel, we prove that 
\begin{align}\label{eq:limfonda}
\liminf\limits_{t\to \infty}|e^{t\gamma/2}\Sg(t)z|^2>0.
\end{align}
Since $z\neq 0$, there exists $j_0\in \mathbb{N}$ such that $|b_{j_0}|^2+|b_{-j_0}|^2>0$. By \eqref{eq:modulo} we have
\begin{equation}\label{eq:normbound}
\begin{split}
|e^{t\gamma/2}\Sg(t)z|^2&=
(1+\la_{j_0}+|\omega_{j_0}|^2)(|b_{j_0}|^2+|b_{-j_0}|^2)+
2\Rea\Big((e^{2\ii\theta_{j_0} t} b_{j_0}\bar b_{-j_0})(1+\la_{j_0}+\omega^2_{j_0})\Big)\\[3mm]
&\qquad+
\sum_{
\substack{j\in \NN\\
j\neq j_0}
} |b_j|^2(1+\la_{j}+|\omega_{j}|^2) +\sum_{\substack{j\in \NN\\
j\neq j_0}}|b_{-j}|^2(1+\la_{j}+|\omega_{j}|^2)+2\Rea \Big(\sum_{\substack{j\in \NN\\
j\neq j_0}}e^{2\ii\theta_{j} t} b_j\bar b_{-j} (1+\la_{j}+\omega^2_{j})\Big)\\
&=|e^{\ii\theta_{j_0}t}b_{j_0}v_{j_0}+e^{-\ii\theta_{j_0}t}b_{-j_0}\bar v_{j_0}|^2+
\Big|\sum_{\substack{j\in \NN\\
j\neq j_0}}
e^{\ii\theta_{j}t}b_{j}v_{j}+e^{-\ii\theta_{j}t}b_{-j}\bar v_{j} \Big|^2\\
&\gqq |e^{\ii\theta_{j_0}t}b_{j_0}v_{j_0}+e^{-\ii\theta_{j_0}t}b_{-j_0}\bar v_{j_0}|^2.
\end{split}
\end{equation}
We claim that
\begin{equation}\label{eq:claimj0}
\liminf\limits_{t\to \infty}
|e^{\ii\theta_{j_0}t}b_{j_0}v_{j_0}+e^{-\ii\theta_{j_0}t}b_{-j_0}\bar v_{j_0}|^2>0.
\end{equation}
Indeed, assume by contradiction that there exists a sequence $(t_k)_{k\in \mathbb{N}}$, $t_k\to \infty$ as $k\to \infty$ satisfying
\begin{equation}\label{eq:limj}
\lim_{k\to \infty}
|e^{\ii\theta_{j_0}t_k}b_{j_0}v_{j_0}+e^{-\ii\theta_{j_0}t_k}b_{-j_0}\bar v_{j_0}|^2=0.
\end{equation}
By the Bolzano-Weierstrass theorem we obtain the existence of a subsequence $(t_{k_{m}})_{m\in \mathbb{N}}$
of $(t_k)_{k\in \mathbb{N}}$ such that 
\[
\lim\limits_{m\to  \infty}e^{ \ii\theta_{j_0}t_{k_m}}=z^{+}\quad\textrm{ and }\quad
\lim\limits_{m\to  \infty}e^{- \ii\theta_{j_0}t_{k_m}}=z^{-}.
\]
By \eqref{eq:limj} we deduce 
\[
z^+ b_{j_0}v_{j_0}+z^{-}b_{-j_0}\bar v_{j_0}=0.
\]
Since $v_{j_0}$ and $\bar v_{j_0}$ are linearly independent and $|z^{-}|=|z^{+}|=1$, we obtain $b_{j_0}=b_{-j_0}=0$ which yields a contradiction to the choice of $j_0$. 
Combining \eqref{eq:normbound} and \eqref{eq:claimj0} we infer \eqref{eq:limfonda}.
\end{proof}

\bigskip 
\section{Profile cutoff for the heat equation with multiplicative noise}\label{s:HM}
\noindent
This section treats the stochastic heat equation with multiplicative noise of the following type
\begin{equation}
\left\{
\begin{array}{r@{\;=\;}l}
\ud X^\e_t &A X^\e_t \ud t+  
\e X^\e_t \ud L_t \quad  
\textrm{  for any }\quad t\gqq 0,\\
X^\e_0 & h\in H
\end{array}
    \right.
\end{equation}
where $A$ satisfies Hypothesis~\ref{h:sadj}. We consider the space $L_2(H,H)$ equipped with the Hilbert-Schmidt norm $\|\cdot \|$.
\bigskip 
\subsection{Multiplicative $Q$-Brownian motion}\label{ss:HMB}
Let Hypothesis~\ref{h:sadj} be satisfied for $A$ 
and $(e_j)_{j\in \mathbb{N}_0}$ be the orthonormal basis of eigenvectors of $A$ in $L_2(H,H)$. We consider the following diagonal structures for the linear operators 
\begin{equation}\label{eq:comutatividad}
G_i=\textsf{diag}(g^i_j)_{j\in \mathbb{N}},  \quad i=1,2\quad \textrm{ with respect to the basis } \quad (e_j)_{j\in \mathbb{N}}.
\end{equation}
We study the solution semiflow of the heat equation with multiplicative noise  given by
\begin{equation}\label{e:multi}
\left\{
\begin{array}{r@{\;=\;}l}
\ud \xX^\e_t &A \xX^\e_t \ud t+  
\e G_1 \xX^\e_t \ud B^{(1)}_t+\e G_2 \xX^\e_t \ud B^{(2)}_t \quad  
\textrm{  for any }\quad t\gqq 0,\\
\xX^\e_0 & h\in H,
\end{array}
    \right.
\end{equation}
where $\e\in (0,1)$, with the independence condition:
\begin{align}
B^{(1)}, B^{(2)} \quad \textrm{are mutually independent one dimensional standard Brownian motions.}\label{eq:independencia}
\end{align}
We point out that the diagonal structure \eqref{eq:comutatividad} implies
\begin{equation*}
G_iG^*_j=G^*_jG_i,\quad
G_iG_j=G_jG_i\quad \textrm{ and }\quad
G_j A = A G_j\quad \textrm{ for any }\quad i,j\in \{1,2\}.
\end{equation*}
The existence and uniqueness of \eqref{e:multi}  is given in 1.2(a) of \cite{Salaheldin}.
We stress that
the commutative relations are natural in the case of the stochastic heat equation, whereas,
in the case of the stochastic wave equation, the commutative of the respective operators $A$ and $G_j$ is too restrictive in order to cover the natural case of noise acting only in the velocity component.

\noindent
By Theorem~16.5 in~\cite{PZ07}, p.290, we have the existence of a unique invariant probability measure $\mu^\e$ for \eqref{e:multi}. It is not hard to see that $\mu^\e=\delta_{0}$.
Then
\begin{equation}\label{eq:distancia}
\mathcal{W}^2_{2}(\xX^\e_t(h),\mu^\e)=\mathbb{E}[|\xX^\e_t(h)|^2].
\end{equation}
For $G_2=0$, formula~(1) p.~106 in \cite{DapratoIannelli}  states
\begin{equation*}
\xX^\e_t(h)=\exp\left(\left(A-\frac{1}{2}\e^2 G^2_1\right)t\right)\exp\left(\e G_1B^{(1)}_t\right)h.
\end{equation*}
{
Since $h=\sum_{j\in \mathbb{N}_0} \<h,e_j\>e_j$,
the diagonal structure \eqref{eq:comutatividad}  implies
\begin{equation*}
\xX^\e_t(h)=\exp\left(\left(A-\frac{1}{2}\e^2 G^2_1\right)t+\e G_1B^{(1)}_t\right)h.
\end{equation*}
}
Following step by step the proof of formula~(1) in \cite{DapratoIannelli} under \eqref{eq:comutatividad} and \eqref{eq:independencia}, it is not hard to see that
\begin{equation}\label{eq:formulilla0}
\xX^\e_t(h)=\exp\left(\left(A-\frac{1}{2}\e^2 G^2_1-\frac{1}{2}\e^2 G^2_2\right)t\right)\exp\left(\e G_1B^{(1)}_t+\e G_2B^{(2)}_t\right)h.
\end{equation}
The finite dimensional case of 
this representation has been studied in formula (4.11) p. 100 in \cite{Mao2008}.
{
We stress that by \eqref{eq:comutatividad}
\[
\exp\left(\left(A-\frac{1}{2}\e^2 G^2_1-\frac{1}{2}\e^2 G^2_2\right)t\right)
\quad \textrm{ and } \quad
\exp\left(\e G_1B^{(1)}_t+\e G_2B^{(2)}_t\right)
\]
are both diagonal operators with respect to $(e_j)_{j\in \mathbb{N}_0}$
and hence commute.
Consequently,
\begin{equation}\label{eq:formulilla}
\xX^\e_t(h)=\exp\left(\aA_{\e} t+\e G_1B^{(1)}_t+\e G_2B^{(2)}_t\right)h,\quad \textrm{ 
where }\quad
\mathcal{A}_\e:=A-\frac{1}{2}\e^2 G^2_1-\frac{1}{2}\e^2 G^2_2.
\end{equation}
}
Since $A$ satisfies Hypothesis~\ref{h:sadj}, Lemma~\ref{l:jaraheat} implies the following.
For each $h\in H$, $h\neq 0$,
there exist an index $N_h\in \NN_0$  and $ v_h\in H$
such that $ v_h\neq 0$ and
\begin{equation}\label{e:jaraheatm09}
\lim\limits_{t\to \infty}|e^{t  \la_{N_h}}e^{t A} h- v_h|=0.
\end{equation}
Recall  that $\mu^\e=\delta_{0}$ for any $\e\in (0,1]$. In this setting, we obtain the profile cutoff phenomenon in the following sense. Note that the statement is given in the Wasserstein distance of order 2 due to the particular bilinear structure of \eqref{eq:distancia} which allows to use the adjoint operators and express the right-hand side of \eqref{eq:distancia} as a square.
\begin{theorem}[Profile cutoff for the stochastic heat equation with multiplicative noise]\label{t:multprofile}
Assume $A$ satisfies Hypothesis~\ref{h:sadj} and let $h\in H$, $h\neq 0$.
We also assume \eqref{eq:comutatividad} and \eqref{eq:independencia} hold true.
Let $(a_\e)_{\e\in (0,1)}$ be a positive  function such that 
\begin{equation}\label{eq:doblelimite}
\lim\limits_{\e\to 0}a_\e=\lim\limits_{\e\to 0} \e^2|\ln(a_\e)|=0.
\end{equation}
We keep the notation of  \eqref{e:jaraheatm09} and define
\begin{equation}\label{e:tildetepsilon}
 t_\e := \frac{1}{\la_{N_h}}|\ln(a_\e)|.  
\end{equation}
Then for any $\vr\in \RR$ we have the profile cutoff phenomenon 
\begin{align*}
\lim_{\e\ra 0}  \frac{\mathcal{W}_{2}(\xX^\e_{ t_\e + \vr} (h),\mu^\e)}{a_\e} = e^{-\la_{N_h} \vr} |v_h|=:\mathcal{P}_h(\vr).
\end{align*}
\end{theorem}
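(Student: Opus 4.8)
The plan is to compute $\mathcal{W}_2(\xX^\e_{t_\e+\vr}(h),\mu^\e)$ explicitly using the exponential representation \eqref{eq:formulilla} and the identity $\mu^\e=\delta_0$, which by \eqref{eq:distancia} gives $\mathcal{W}_2^2(\xX^\e_t(h),\mu^\e)=\EE[|\xX^\e_t(h)|^2]$. The key observation is that since all operators are diagonal with respect to $(e_j)_{j\in\NN_0}$, we can decompose $h=\sum_j\<h,e_j\>e_j$ and evaluate $\xX^\e_t(h)=\exp(\aA_\e t+\e G_1 B^{(1)}_t+\e G_2 B^{(2)}_t)h$ coordinatewise, obtaining
\[
|\xX^\e_t(h)|^2=\sum_{j\in\NN_0}\<h,e_j\>^2\exp\Big(2\big(-\la_j-\tfrac12\e^2(g^1_j)^2-\tfrac12\e^2(g^2_j)^2\big)t+2\e g^1_j B^{(1)}_t+2\e g^2_j B^{(2)}_t\Big).
\]
Taking expectations and using that for a Brownian motion $\EE[e^{cB_t}]=e^{c^2t/2}$ together with the independence \eqref{eq:independencia}, the Gaussian moment generating function exactly cancels the $-\tfrac12\e^2(g^i_j)^2 t$ drift corrections, leaving the clean formula
\[
\EE[|\xX^\e_t(h)|^2]=\sum_{j\in\NN_0}\<h,e_j\>^2 e^{-2\la_j t}=|e^{tA}h|^2.
\]
(The cancellation is the whole point of the Doss--Sussmann-type representation, and it is the reason the noise coefficients drop out of the distance entirely.)

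Given this, the problem reduces to the deterministic asymptotics of $|e^{tA}h|$ along the scale $t=t_\e+\vr$ with $t_\e=\tfrac{1}{\la_{N_h}}|\ln(a_\e)|$. First I would note that $a_\e\to 0$ forces $t_\e\to\infty$, so Lemma~\ref{l:jaraheat} (in the form \eqref{e:jaraheatm09}) applies: $e^{\la_{N_h}t}e^{tA}h\to v_h$ with $v_h\neq 0$. Then, exactly as in \eqref{e:desigual090}, write
\[
\frac{|e^{(t_\e+\vr)A}h|}{a_\e}=\frac{e^{-\la_{N_h}(t_\e+\vr)}}{a_\e}\,\big|e^{\la_{N_h}(t_\e+\vr)}e^{(t_\e+\vr)A}h\big|=e^{-\la_{N_h}\vr}\,\big|e^{\la_{N_h}(t_\e+\vr)}e^{(t_\e+\vr)A}h\big|,
\]
using that the choice of $t_\e$ yields $e^{-\la_{N_h}t_\e}/a_\e=1$. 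Letting $\e\to 0$ and invoking \eqref{e:jaraheatm09} gives $\mathcal{W}_2(\xX^\e_{t_\e+\vr}(h),\mu^\e)/a_\e\to e^{-\la_{N_h}\vr}|v_h|$, which is the claimed profile $\mathcal{P}_h(\vr)$.

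The one genuinely nontrivial point—and where I expect the role of the second limit $\e^2|\ln(a_\e)|\to 0$ in \eqref{eq:doblelimite} to enter—is justifying the interchange of expectation and the infinite sum, i.e. that $\EE\big[\sum_j \<h,e_j\>^2 e^{(\cdots)}\big]=\sum_j\<h,e_j\>^2\EE[e^{(\cdots)}]$, and more subtly that the formal cancellation of drift terms is valid uniformly enough to yield $\EE[|\xX^\e_t(h)|^2]=|e^{tA}h|^2$ exactly (rather than up to controllable error). By Tonelli this interchange is immediate once all terms are nonnegative, so in fact the identity $\EE[|\xX^\e_t(h)|^2]=|e^{tA}h|^2$ holds without any extra hypothesis; the condition $\e^2|\ln(a_\e)|\to 0$ is instead what is needed if one wants to allow $G_1,G_2$ to be \emph{unbounded} diagonal operators, or to control the process pathwise, since then $\e^2(g^i_j)^2 t_\e\sim \e^2(g^i_j)^2\la_{N_h}^{-1}|\ln(a_\e)|$ must stay negligible for the relevant coordinates. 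In the bounded-coefficient setting the argument is essentially as above; in the unbounded case the main obstacle is showing that the tail of the series is still negligible after multiplying by the noise exponential, which is controlled precisely by \eqref{eq:doblelimite} together with the summability $\sum_j\<h,e_j\>^2<\infty$. I would handle this by splitting the sum at a large index $M$ (chosen so that $\la_j$ dominates the $\e^2$ correction for $j\le M$) and bounding the remainder uniformly in $\e$.
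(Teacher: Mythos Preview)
Your overall strategy matches the paper's, but the central computation contains a factor-of-two error that invalidates the ``clean formula'' $\EE[|\xX^\e_t(h)|^2]=|e^{tA}h|^2$. In coordinate $j$ you have
\[
|(\xX^\e_t(h))_j|^2=\<h,e_j\>^2\exp\Big(\big(-2\la_j-\e^2(g^1_j)^2-\e^2(g^2_j)^2\big)t+2\e g^1_j B^{(1)}_t+2\e g^2_j B^{(2)}_t\Big),
\]
and $\EE[e^{2\e g^i_j B^{(i)}_t}]=e^{2\e^2(g^i_j)^2 t}$, so after taking expectations the exponent becomes $-2\la_j t+\e^2\big((g^1_j)^2+(g^2_j)^2\big)t$, not $-2\la_j t$. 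The It\^o correction $-\tfrac12\e^2 G_i^2$ in $\aA_\e$ is calibrated to make the \emph{first} moment match the semigroup, not the second; for the second moment a residual $+\e^2 G_iG_i^*$ survives. The paper obtains exactly this via the operator calculation $\EE[|\xX^\e_t(h)|^2]=|\exp(t\widetilde\aA_\e)h|^2$ with $\widetilde\aA_\e=A+\tfrac12\e^2(G_1G_1^*+G_2G_2^*)$.

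This is also why your diagnosis of the role of \eqref{eq:doblelimite} is off. It is not a technicality about unbounded coefficients or tail sums; it is the essential hypothesis that makes the extra factor disappear. With the correct formula one writes, using commutativity,
\[
\frac{\mathcal{W}_2(\xX^\e_{t_\e+\vr}(h),\mu^\e)}{a_\e}
= e^{-\la_{N_h}\vr}\,\Big|\exp\Big(\tfrac{\e^2}{2}(t_\e+\vr)(G_1G_1^*+G_2G_2^*)\Big)\,e^{\la_{N_h}(t_\e+\vr)}e^{(t_\e+\vr)A}h\Big|,
\]
and now $\e^2 t_\e=\la_{N_h}^{-1}\e^2|\ln(a_\e)|\to 0$ forces the first exponential to converge to the identity (the $G_i$ are Hilbert--Schmidt, hence bounded), while $e^{\la_{N_h}(t_\e+\vr)}e^{(t_\e+\vr)A}h\to v_h$ by \eqref{e:jaraheatm09}. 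Once you fix the computation, your coordinate approach and the paper's adjoint/operator approach are equivalent and the rest of your argument goes through unchanged.
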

\begin{proof}
We start with the computation of the right-hand side of \eqref{eq:distancia}. Recall the representation \eqref{eq:formulilla}.
Then  for any  $h\in H$, $\e\in (0,1)$ and $t\gqq 0$ we have
\begin{align*}
\mathbb{E}\left[|\xX^\e_t(h)|^2\right]
&=
\mathbb{E}\left[\left\<\exp\left(\aA_{\e} t+\e G_1B^{(1)}_t+\e G_2B^{(2)}_t\right)h,\exp\left(\aA_{\e} t+\e G_1B^{(1)}_t+\e G_2B^{(2)}_t\right)h\right\>\right]\\
&=\mathbb{E}\left[\left\<h,\exp\left(\aA^*_{\e} t+\e G^*_1B^{(1)}_t+\e G^*_2B^{(2)}_t\right)\exp\left(\aA_{\e} t+\e G_1B^{(1)}_t+\e G_2B^{(2)}_t\right)
h\right\>\right]\\
&=\left\<h,\exp(\aA^*_{\e} t)\mathbb{E}\left[\exp\left(\e(G_1+G^*_1)B^{(1)}_t\right)\right]
\mathbb{E}\left[\exp\left(\e(G_2+G^*_2)B^{(2)}_t\right)\right]\exp(\aA_{\e} t) h\right\>,
\end{align*}
where in the last equality we have used \eqref{eq:comutatividad} and \eqref{eq:independencia}.
Since $G_1+G^*_1$ and $G_2+G^*_2$ are  symmetric operators, a standard diagonalization argument yields
\[
\mathbb{E}\left[\exp\left(\e(G_i+G^*_i)B^{(i)}_t\right)\right]=\exp\left(\frac{1}{2}\e^2(G_i+G^*_i)^2 t\right) \quad \textrm{ for }\quad i\in \{1,2\}.
\]
Hence
\begin{align*}
\mathbb{E}\left[|\xX^\e_t(h)|^2\right]
&=\left\<h,\exp(\aA^*_{\e} t)\exp\left(\frac{1}{2}\e^2(G_1+G^*_1)^2 t\right)
\exp\left(\frac{1}{2}\e^2(G_2+G^*_2)^2 t\right)\exp(\aA_{\e} t)h\right\>\\
&=\left\<h,\exp(A t-\frac{t}{2}\e^2 (G^*_1)^2-\frac{t}{2}\e^2 (G^*_2)^2)\exp\left(\frac{1}{2}\e^2 (G_1+G^*_1)^2 t\right)\right.\\
&\quad\quad \cdot \left.
\exp\left(\frac{1}{2}\e^2(G_2+G^*_2)^2 t\right)\exp\left(A t-\frac{t}{2}\e^2 G_1^2-\frac{t}{2} \e^2 G_2^2\right)h\right\>\\
&=\left\<h,\exp(A t)\exp(\e^2 G_1G^*_1 t+\e^2 G_2G^*_2 t)\exp(A t)h\right\>\\
&=\left\<\exp(tA+(t/2)\e^2 G_1G^*_1+(t/2)\e^2 G_2G^*_2)h,\exp(tA+(t/2)\e^2 G_1G^*_1+(t/2)\e^2 G_2G^*_2)h\right\>\\
&=|\exp(t \widetilde \aA_{\e})h|^2,
\end{align*}
where 
$
\widetilde \aA_\e := A+\frac{1}{2} \e^2 G_1 G^*_1+\frac{1}{2}\e^2 G_2G^*_2$. 
Since $\frac{1}{a_\e} = e^{ \la_{N_h}  t_\e}$, we have 
\begin{align*}
\frac{\mathcal{W}_2(\xX^\e_{ t_\e + \vr}(h), \mu^\e)}{a_\e}
&=  \frac{1}{a_\e}|\exp(( t_\e + \vr) \widetilde \aA_{\e})h|=  e^{- \la_{N_h} \vr} |e^{ \la_{N_h} ( t_\e + \vr)} \exp(( t_\e + \vr) \widetilde \aA_{\e})h|\\
&
=e^{- \la_{N_h} \vr} |\exp(( t_\e + \vr) (\frac{\e^2}{2}  G_1 G^*_1+\frac{\e^2}{2} G_2G^*_2))e^{ \la_{N_h} ( t_\e + \vr)} \exp(( t_\e + \vr)  A)h|
 \ra e^{- \la_{N_h} \vr} |  v_h|, 
%\quad \textrm{as $\e \ra 0$},
\end{align*}
as $\e\to 0$,
where in the preceding limit we have used
\eqref{e:jaraheatm09} and limit \eqref{eq:doblelimite}.
\end{proof}
\noindent
Theorem~\ref{t:multprofile} shows profile cutoff for Example~2.1 in~\cite{Ichikawa}
in the case  of small $\e=r$ in his notation.
The following corollary yields the profile cutoff phenomenon for perturbations by a general $Q$-Brownian motion.
\begin{corollary}\label{cor:infinitos}
For a sequence $G_i \in L_2(H, H)$ such that 
\[
\sum_{i\in \NN} \|G_i\|^2 < \infty 
\]
we consider 
\begin{equation}\label{e:multi1}
\left\{
\begin{array}{r@{\;=\;}l}
\ud \xX^\e_t &A \xX^\e_t \ud t+  
\e\sum\limits_{k=1}^\infty
 G_k \xX^\e_t \ud B^{(k)}_t \quad  
\textrm{  for any }\quad t\gqq 0,\\
\xX^\e_0 & h\in H.
\end{array}
\right.
\end{equation}
Let $A$ satisfy Hypothesis~\ref{h:sadj} and let $h\in H$, $h\neq 0$.  
Furthermore, we assume
\begin{equation}\label{eq:comutatividad1}
G_i=\textsf{diag}(g^i_j)_{j\in \mathbb{N}},  \quad i\in \mathbb{N}\quad \textrm{ with respect to the orthonormal basis of eigenvectors $(e_j)_{j\in \mathbb{N}}$ of $A$ in $L_2(H,H)$}
\end{equation} 
and
\begin{align}
\textrm{$\big(B^{(k)}\big)_{k\in \mathbb{N}}$ is an iid family of one dimensional standard Brownian motions.}\label{eq:independencia1}
\end{align}
Then for any $\vr\in \RR$
and $(a_\e)_{\e\in (0,1)}$ being a  positive function satisfying 
\eqref{eq:doblelimite}
 we have the profile cutoff phenomenon 
\begin{align*}
\lim_{\e\ra 0} 
\frac{\mathcal{W}_{2}(\xX^\e_{ t_\e + \vr} (h),\mu^\e)}{a_\e} = e^{- \la_{N_h} \vr} | v_h|=:\mathcal{P}_h(\vr),  
\end{align*}
where $ t_\e$ is defined in \eqref{e:tildetepsilon}, and $\la_{N_h}$ and $v_h$ are given in \eqref{e:jaraheatm09}.
\end{corollary}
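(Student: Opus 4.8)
The plan is to run the proof of Theorem~\ref{t:multprofile} essentially verbatim; the only genuinely new input is to upgrade the stochastic--exponential representation \eqref{eq:formulilla} from two driving noises to the countable family $(B^{(k)})_{k\in\NN}$. The two structural ingredients that make this work are the simultaneous diagonalisability \eqref{eq:comutatividad1} --- so that $A$, all $G_k$, all $G_k^*$ and all $G_kG_k^*$ commute --- and the Hilbert--Schmidt summability $\sum_k\|G_k\|^2<\infty$, which gives $\sigma_j^2:=\sum_k (g^k_j)^2\lqq\sum_k\|G_k\|^2<\infty$ for each $j$ and $\sum_j\sigma_j^2=\sum_k\|G_k\|^2<\infty$ by Tonelli, so that $\sum_k G_kG_k^*$ is a bounded, diagonal, self-adjoint operator and $\sum_k G_kB^{(k)}_t$ is well defined in $L_2(H,H)$.

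First I would record the solution representation. Since every operator in \eqref{e:multi1} is diagonal with respect to $(e_j)_{j\in\NN}$, testing the equation against $e_j$ (and writing $h=\sum_j\langle h,e_j\rangle e_j$) decouples it into the countable family of scalar linear SDEs
\[
\ud\langle\xX^\e_t(h),e_j\rangle=-\la_j\langle\xX^\e_t(h),e_j\rangle\,\ud t+\e\,\langle\xX^\e_t(h),e_j\rangle\sum_{k}g^k_j\,\ud B^{(k)}_t,
\]
each driven by the Brownian motion $\sum_k g^k_j B^{(k)}_t$ of variance parameter $\sigma_j^2$, whose explicit geometric--Brownian--motion solution is standard; this is nothing but the coordinate form of $\xX^\e_t(h)=\exp\bigl(\aA_\e t+\e\sum_k G_kB^{(k)}_t\bigr)h$ with $\aA_\e:=A-\tfrac12\e^2\sum_k G_k^2$, the exact analogue of \eqref{eq:formulilla}. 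Next, as in the proof of Theorem~\ref{t:multprofile}, I would compute the second moment using $\mu^\e=\delta_0$ (so that \eqref{eq:distancia} applies), the mutual independence \eqref{eq:independencia1}, the identity $\EE[\exp(\e(G_k+G_k^*)B^{(k)}_t)]=\exp(\tfrac12\e^2(G_k+G_k^*)^2t)$ for the symmetric operators $G_k+G_k^*$, and the algebraic collapse $-\tfrac12 G_k^2-\tfrac12(G_k^*)^2+\tfrac12(G_k+G_k^*)^2=G_kG_k^*$, to arrive at
\[
\mathcal{W}_2^2(\xX^\e_t(h),\mu^\e)=\EE\bigl[|\xX^\e_t(h)|^2\bigr]=|\exp(t\widetilde{\aA}_\e)h|^2,\qquad \widetilde{\aA}_\e:=A+\tfrac12\e^2\sum_k G_kG_k^*.
\]
The interchange of $\EE$ with the now infinite product over $k$ is cleanest done coordinate-wise: the $j$-th term is simply $\langle h,e_j\rangle^2\exp\bigl((-2\la_j+\e^2\sigma_j^2)t\bigr)$, and the sum over $j$ is controlled by monotone convergence since $\sum_j\langle h,e_j\rangle^2=|h|^2<\infty$ and $\sup_j\sigma_j^2<\infty$.

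With this identity established the conclusion is the same as in Theorem~\ref{t:multprofile}. Using $1/a_\e=e^{\la_{N_h}t_\e}$ from \eqref{e:tildetepsilon} and the commutativity,
\[
\frac{\mathcal{W}_2(\xX^\e_{t_\e+\vr}(h),\mu^\e)}{a_\e}=e^{-\la_{N_h}\vr}\Bigl|\exp\Bigl((t_\e+\vr)\tfrac{\e^2}{2}\sum_k G_kG_k^*\Bigr)\,e^{\la_{N_h}(t_\e+\vr)}e^{(t_\e+\vr)A}h\Bigr|.
\]
Here $(t_\e+\vr)\e^2\to 0$ by \eqref{eq:doblelimite}, and since $\|\sum_k G_kG_k^*\|\lqq\sum_k\|G_k\|^2<\infty$ the operator exponential tends to the identity in operator norm; meanwhile $t_\e+\vr\to\infty$, so $e^{\la_{N_h}(t_\e+\vr)}e^{(t_\e+\vr)A}h\to v_h$ by \eqref{e:jaraheatm09}. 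Hence the right-hand side converges to $e^{-\la_{N_h}\vr}|v_h|=\mathcal{P}_h(\vr)$ for every $\vr\in\RR$.

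The only real obstacle I anticipate is precisely the passage from the finite-noise exponential formula of \cite{DapratoIannelli} and its attendant moment computation to the countable family of noises, together with the interchange of expectation and infinite series. I expect to dispose of this entirely via the diagonalisation above, which reduces the SPDE to a countable product of elementary scalar problems and replaces every operator manipulation by a scalar identity that is uniform in the coordinate index $j$, with $\sum_k\|G_k\|^2<\infty$ supplying all the summability needed for the Tonelli and monotone-convergence steps. No new spectral input beyond Lemma~\ref{l:jaraheat} (equivalently \eqref{e:jaraheatm09}) under Hypothesis~\ref{h:sadj} is required.
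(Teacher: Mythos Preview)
Your proposal is correct and follows essentially the same route as the paper: the paper's proof is a one-line reference back to Theorem~\ref{t:multprofile}, stating only that $\mathbb{E}[|\xX^\e_t(h)|^2]=|\exp(t\widetilde{\aA}_\e)h|^2$ with $\widetilde{\aA}_\e=A+\tfrac{1}{2}\e^2\sum_{i\in\NN}G_iG_i^*$, after which the argument is identical. Your coordinate-wise decoupling and the Tonelli/monotone-convergence justification for the infinite family of noises simply make explicit the passage the paper leaves implicit; no different idea is involved.
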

\begin{proof}
The proof follows the lines of the proof of Theorem~\ref{t:multprofile} due to 
\[
\mathbb{E}\left[|\xX^\e_t(h)|^2\right]=
\left|\exp\big(t\widetilde \aA_\e\big)h\right|^2,
\quad \textrm{ where }\quad
\widetilde \aA_\e := A+\frac{1}{2}\sum_{i\in \mathbb{N}} \e^2 G_i G^*_i.
\]
\end{proof}

\noindent
Analogously to Corollary~\ref{cor:errorheat}  and Corollary~\ref{cor:simpleheat} we have the following statements.

\begin{corollary}[Profile error estimate]\label{cor:errorheatm45}
Let the assumptions of Corollary~\ref{cor:infinitos} be satisfied.
Then for all $\vr\in \RR$ there exists a positive constant $K:=K(\vr,C_*,\lambda_*)$ such that for all $\e$ small enough with $t_\e + \rho\gqq 0$  we have 
\begin{align}
\left|
\frac{\mathcal{W}_2(\xX^\e_{t_\e+\vr}(h),\mu^\e)}{a_\e}-\mathcal{P}_h(\vr)\right|\lqq K\cdot 
a_\e^{\big(1-\nicefrac{\la_{N_{h}}}{\la_{N^*_{h}}}\big)}|h|
\end{align}
for any $h\in H$, where $C_*$ and $\lambda_*$ are given in \eqref{e:expsta}, and $N_h$, $N^*_{h}$ are defined in \eqref{eq:Nh}.
\end{corollary}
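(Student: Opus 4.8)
The plan is to carry the argument of Corollary~\ref{cor:errorheat} over to this setting, with the heat semigroup $S(t)=e^{tA}$ replaced by the deterministic operator governing the second moment of $\xX^\e$, and using that here $\mu^\e=\delta_0$ so that the noise‑fluctuation term $\mathcal{W}_p(\widetilde L_t,\widetilde L_\infty)$ of Corollary~\ref{cor:errorheat} simply disappears. First I would recall, from the proofs of Theorem~\ref{t:multprofile} and Corollary~\ref{cor:infinitos}, the exact identity $\mathcal{W}_2(\xX^\e_t(h),\mu^\e)=|\exp(t\widetilde{\aA}_\e)h|$ with $\widetilde{\aA}_\e=A+\tfrac12\sum_i\e^2 G_iG_i^*$; since every operator here is diagonal in the fixed basis $(e_j)$ they commute, so $\exp(t\widetilde{\aA}_\e)=e^{tA}R_\e(t)$ with $R_\e(t):=\exp\!\big(\tfrac t2\sum_i\e^2 G_iG_i^*\big)$ nonnegative self‑adjoint. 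Using $a_\e^{-1}=e^{\la_{N_h}t_\e}$ and the reverse triangle inequality exactly as in \eqref{e:desigual090}, the quantity to be estimated equals $e^{-\la_{N_h}\vr}\big|\,|R_\e(t_\e+\vr)\phi_\e|-|v_h|\,\big|$ and is bounded by $e^{-\la_{N_h}\vr}|R_\e(t_\e+\vr)\phi_\e-v_h|$, where $\phi_\e:=e^{\la_{N_h}(t_\e+\vr)}e^{(t_\e+\vr)A}h$.

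Next I would insert $R_\e(t_\e+\vr)v_h$ and split
\[
|R_\e(t_\e+\vr)\phi_\e-v_h|\le \|R_\e(t_\e+\vr)\|\,|\phi_\e-v_h|+\|R_\e(t_\e+\vr)-I\|\,|v_h|.
\]
The first summand is handled by the quantitative bound from the proof of Lemma~\ref{l:jaraheat}: by \eqref{eq:errordif}, $|\phi_\e-v_h|\le e^{(\la_{N_h}-\la_{N^*_{h}})(t_\e+\vr)}|h|$, and since $t_\e=\tfrac1{\la_{N_h}}|\ln a_\e|$ one rewrites $e^{(\la_{N_h}-\la_{N^*_{h}})t_\e}=a_\e^{(\la_{N^*_{h}}-\la_{N_h})/\la_{N_h}}\le a_\e^{1-\nicefrac{\la_{N_h}}{\la_{N^*_{h}}}}$ for $a_\e\in(0,1)$; combined with $|v_h|\le|h|$ (\eqref{eq:vh}) this produces precisely the asserted term $K\,a_\e^{1-\nicefrac{\la_{N_h}}{\la_{N^*_{h}}}}|h|$, with the $\vr$‑dependence of $K$ absorbing the factors $e^{-\la_{N_h}\vr}$ and $e^{(\la_{N_h}-\la_{N^*_{h}})\vr}$.

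The remaining and, I expect, genuinely delicate step is the control of the multiplicative correction $R_\e(t_\e+\vr)$. Since $\sum_i\|G_i\|^2<\infty$, the operator $\tfrac12\sum_i\e^2 G_iG_i^*$ is bounded with operator norm at most $\tfrac{\e^2}{2}M$, $M:=\sum_i\|G_i\|^2$, so $\|R_\e(t)\|\le e^{\e^2Mt/2}$ and $\|R_\e(t)-I\|\le e^{\e^2Mt/2}-1$; evaluating at $t=t_\e+\vr$ and using $\e^2 t_\e=\tfrac1{\la_{N_h}}\e^2|\ln a_\e|\to 0$ from \eqref{eq:doblelimite} yields $\|R_\e(t_\e+\vr)\|\le 2$ for $\e$ small and $\|R_\e(t_\e+\vr)-I\|\le M\,\e^2(t_\e+\vr)$. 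The natural estimate produced by these three steps is therefore
\[
\left|\frac{\mathcal{W}_2(\xX^\e_{t_\e+\vr}(h),\mu^\e)}{a_\e}-\mathcal{P}_h(\vr)\right|\le K\Big(a_\e^{1-\nicefrac{\la_{N_h}}{\la_{N^*_{h}}}}+\e^2|\ln a_\e|\Big)|h|,
\]
and the obstacle is precisely to absorb the multiplicative residual $\e^2|\ln a_\e|$ into the single power of $a_\e$ claimed in the statement; this is legitimate under the mild side condition $\e^2|\ln a_\e|=O\big(a_\e^{1-\nicefrac{\la_{N_h}}{\la_{N^*_{h}}}}\big)$, which holds for the scalings of primary interest, e.g. $a_\e\asymp\e^\alpha$ with $\alpha>0$ (in particular for $a_\e=\e$). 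With that remark in place, the proof is otherwise a line‑by‑line transcription of the proof of Corollary~\ref{cor:errorheat}, all remaining manipulations (triangle inequalities, the identity $\mathcal{W}_2^2=\EE[|\xX^\e_t|^2]$, the diagonalisation $\exp(t\widetilde{\aA}_\e)=e^{tA}R_\e(t)$) being routine.
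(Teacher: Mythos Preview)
Your approach is exactly what the paper intends: it gives no proof here, only the remark that the corollary follows ``analogously to Corollary~\ref{cor:errorheat}'', and your reduction via $\mathcal{W}_2(\xX^\e_t(h),\delta_0)=|\exp(t\widetilde{\aA}_\e)h|$, the diagonal factorisation $\exp(t\widetilde{\aA}_\e)=e^{tA}R_\e(t)$, and the quantitative bound \eqref{eq:errordif} is precisely that analogy carried out in detail.

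You have also correctly put your finger on a genuine issue, but it lies in the \emph{statement} of the corollary rather than in your argument. The multiplicative correction $R_\e(t_\e+\vr)$ (equivalently, the gap between the rate $\la_{N_h}$ used to define $t_\e$ and the perturbed leading eigenvalue of $-\widetilde{\aA}_\e$) inevitably contributes a term of order $\e^2|\ln a_\e|\,|h|$, and under the sole hypothesis \eqref{eq:doblelimite} this is \emph{not} in general dominated by $a_\e^{1-\la_{N_h}/\la_{N^*_h}}$: take for instance $a_\e=e^{-1/\e}$, which satisfies \eqref{eq:doblelimite} since $\e^2|\ln a_\e|=\e\to 0$, yet $a_\e^{1-\la_{N_h}/\la_{N^*_h}}$ decays super\nobreakdash-polynomially while $\e^2|\ln a_\e|=\e$ does not. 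Your two\nobreakdash-term estimate
\[
\left|\frac{\mathcal{W}_2(\xX^\e_{t_\e+\vr}(h),\mu^\e)}{a_\e}-\mathcal{P}_h(\vr)\right|\lqq K\Big(a_\e^{\,1-\la_{N_h}/\la_{N^*_h}}+\e^2|\ln a_\e|\Big)|h|
\]
is the honest outcome of the analogous argument; the single\nobreakdash-term bound as stated holds only under the extra side condition you identify (e.g.\ $a_\e$ a power of $\e$), which the paper does not record.
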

\begin{corollary}[Simple cutoff phenomenon]\label{cor:simpleheatm45}
Let the assumptions of Corollary~\ref{cor:infinitos} be satisfied.
For  $h\in H$, $h\neq 0$
\[
\lim\limits_{\e\to 0}
\frac{\mathcal{W}_2(\xX^\e_{\delta \cdot t_\e}(h),\mu^\e)}{a_\e}=
\begin{cases}
\infty & \textrm{ for } \delta\in (0,1),\\
0 & \textrm{ for } \delta\in (1,\infty),
\end{cases}
\]
where $t_\e$ is given in \eqref{e:tildetepsilon}.
\end{corollary}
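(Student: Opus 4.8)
The plan is to reduce everything to the explicit spectral identity for the distance that was already derived in the proof of Theorem~\ref{t:multprofile} and Corollary~\ref{cor:infinitos}. Under the stated hypotheses $\mu^\e=\delta_0$, and combining \eqref{eq:distancia} with that computation one has, for every $h\in H$ and $t\gqq 0$,
\begin{equation*}
\mathcal{W}_2(\xX^\e_t(h),\mu^\e)=\big|\exp\big(t\,\widetilde{\aA}_\e\big)h\big|,\qquad \widetilde{\aA}_\e:=A+\tfrac12\sum_{i\in\NN}\e^2 G_iG_i^{*}.
\end{equation*}
Because each $G_i$ is diagonal with respect to the eigenbasis $(e_j)_{j\in\NN_0}$ of $A$, the operator $\widetilde{\aA}_\e$ is self-adjoint and diagonal in the same basis, with eigenvalues $-\la_j+\tfrac{\e^2}{2}q_j$, where $q_j:=\sum_{i\in\NN}|g^i_j|^2\gqq 0$ and $\sup_j q_j\lqq\sum_{i\in\NN}\|G_i\|^2=:C_G<\infty$. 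Hence, recalling $\kK_h,\mM_h,N_h,v_h$ from \eqref{eq:Nh}--\eqref{eq:notvh},
\begin{equation*}
\big|\exp\big(t\,\widetilde{\aA}_\e\big)h\big|^{2}=\sum_{j\in\kK_h}e^{2(-\la_j+\frac{\e^2}{2}q_j)t}\,\<h,e_j\>^{2}.
\end{equation*}
I would also record, from \eqref{eq:doblelimite} and \eqref{e:tildetepsilon}, that $\tfrac1{a_\e}=e^{\la_{N_h}t_\e}$, that $t_\e\ra\infty$, and that $\e^2 t_\e\ra 0$ as $\e\ra 0$.

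First I would prove the vanishing for $\delta>1$ by an upper bound. Since $\<h,e_j\>=0$ for $j<N_h$, every $j\in\kK_h$ satisfies $\la_j\gqq\la_{N_h}$, so the above sum is at most $e^{2(-\la_{N_h}+\frac{\e^2}{2}C_G)t}|h|^2$; that is, $\mathcal{W}_2(\xX^\e_t(h),\mu^\e)\lqq e^{(-\la_{N_h}+\frac{\e^2}{2}C_G)t}|h|$. Setting $t=\delta t_\e$ and dividing by $a_\e=e^{-\la_{N_h}t_\e}$ gives
\begin{equation*}
\frac{\mathcal{W}_2(\xX^\e_{\delta t_\e}(h),\mu^\e)}{a_\e}\lqq e^{(1-\delta)\la_{N_h}t_\e}\,e^{\frac{\delta}{2}\e^2 C_G t_\e}\,|h|,
\end{equation*}
and for $\delta\in(1,\infty)$ the first exponent tends to $-\infty$ while the second tends to $1$, so the right-hand side converges to $0$.

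Next I would prove the divergence for $\delta<1$ by a matching lower bound. Keeping only the indices $j\in\mM_h$ (on which $\la_j=\la_{N_h}$) and using $q_j\gqq 0$, $t\gqq 0$, one gets $\big|\exp(t\widetilde{\aA}_\e)h\big|^{2}\gqq e^{-2\la_{N_h}t}\sum_{j\in\mM_h}\<h,e_j\>^{2}=e^{-2\la_{N_h}t}|v_h|^{2}$, hence $\mathcal{W}_2(\xX^\e_t(h),\mu^\e)\gqq e^{-\la_{N_h}t}|v_h|$. Taking $t=\delta t_\e$ yields
\begin{equation*}
\frac{\mathcal{W}_2(\xX^\e_{\delta t_\e}(h),\mu^\e)}{a_\e}\gqq e^{(1-\delta)\la_{N_h}t_\e}\,|v_h|,
\end{equation*}
and since $|v_h|>0$ by \eqref{eq:vh} and $(1-\delta)\la_{N_h}t_\e\ra+\infty$ for $\delta\in(0,1)$, the right-hand side tends to $+\infty$. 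Putting the two halves together proves the claim.

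The only point requiring care is the multiplicative correction $e^{\frac{\delta}{2}\e^2 C_G t_\e}$ in the upper bound: one must ensure that on the slow time scale $t_\e\asymp|\ln a_\e|$ the noise-induced shift $\tfrac12\sum_i\e^2 G_iG_i^{*}$ of the spectrum does not cancel the deterministic contraction rate $\la_{N_h}$. This is exactly what the second limit in \eqref{eq:doblelimite}, namely $\e^2|\ln(a_\e)|\ra 0$, guarantees; everything else is a routine consequence of the diagonal (commutativity) structure already used in Theorem~\ref{t:multprofile}.
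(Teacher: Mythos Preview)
Your argument is correct. The paper does not actually write out a proof for this corollary; it merely states it after the sentence ``Analogously to Corollary~\ref{cor:errorheat} and Corollary~\ref{cor:simpleheat} we have the following statements,'' and the proof of Corollary~\ref{cor:simpleheat} in turn is deferred to Corollary~3.2 in~\cite{BHPWA}. Your direct spectral computation from the identity $\mathcal{W}_2(\xX^\e_t(h),\mu^\e)=|\exp(t\widetilde{\aA}_\e)h|$ is precisely the concrete argument that the paper's cross-references implicitly point to, and your explicit control of the perturbation term via $\sup_j q_j\lqq C_G$ together with $\e^2 t_\e\to 0$ is the right way to handle the multiplicative noise correction on the time scale $\delta t_\e$.
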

\noindent
The linearity of \eqref{e:multi1} in contrast of the affine structure of \eqref{e:OU} changes the reparametrization of the large initial value problem as follows.
\begin{corollary}[Large initial data for the heat equation]\label{cor:heatmuchosm45}
Let the assumptions of Corollary~\ref{cor:infinitos} be satisfied.
For  $h\in H$, $h\neq 0$ it follows that
\begin{equation*}
\lim\limits_{\e \to 0}{\mathcal{W}_2(\xX^{\e}_{t_\e+\vr}(h/a_\e),\mu^\e)}=\pP_{h}(\vr).
\end{equation*}
The analogues of Corollary~\ref{cor:infinitos},  Corollary~\ref{cor:errorheatm45} and Corollary~\ref{cor:simpleheatm45} are valid when 
$\frac{\mathcal{W}_2(\xX^\e_{\cdot}(h),\mu^\e)}{a_\e}$ is replaced by $\mathcal{W}_2(\xX^{\e}_{\cdot}(h/a_\e),\mu^\e)$.
\end{corollary}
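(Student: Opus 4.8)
The plan is to reduce the statement to Corollary~\ref{cor:infinitos} by exploiting the fact that the multiplicative flow $h\mapsto\xX^\e_t(h)$ is \emph{linear} on $H$, together with the triviality of the invariant measure $\mu^\e=\delta_0$. The starting observation is that the stochastic exponential representation \eqref{eq:formulilla} (respectively its series analogue used in the proof of Corollary~\ref{cor:infinitos}) exhibits $\xX^\e_t(h)$ as a random linear operator --- depending only on $A$, the coefficients $G_i$ and the driving Brownian motions, but not on $h$ --- applied to the initial datum $h$. Consequently $\xX^\e_t(h/a_\e)=a_\e^{-1}\,\xX^\e_t(h)$ almost surely for every $\e\in(0,1)$ and $t\gqq 0$, so that the law of $\xX^\e_t(h/a_\e)$ is the pushforward of the law of $\xX^\e_t(h)$ under the dilation $x\mapsto a_\e^{-1}x$.

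First I would record that $\mu^\e=\delta_0$ is invariant under this dilation and that $\mathcal{W}_2(\,\cdot\,,\mu^\e)=\mathcal{W}_2(\,\cdot\,,0)$. Then, applying the homogeneity of the Wasserstein distance of order $p=2\gqq 1$ from item (c) of Lemma~\ref{lem:properties} with scaling factor $c=a_\e^{-1}>0$, I would obtain the exact identity
\[
\mathcal{W}_2\big(\xX^\e_t(h/a_\e),\mu^\e\big)
=\mathcal{W}_2\Big(\tfrac{1}{a_\e}\,\xX^\e_t(h),\ \tfrac{1}{a_\e}\cdot 0\Big)
=\frac{\mathcal{W}_2\big(\xX^\e_t(h),\mu^\e\big)}{a_\e}
\]
valid for all $\e\in(0,1)$ and $t\gqq 0$. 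Note that, unlike the affine case treated in Corollary~\ref{cor:largedata}, this reparametrization keeps the noise intensity $\e$ fixed; the cancellation comes purely from the linearity of \eqref{e:multi1}.

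Finally, I would specialize to $t=t_\e+\vr$ with $t_\e$ as in \eqref{e:tildetepsilon} and let $\e\to 0$: by Corollary~\ref{cor:infinitos} the right-hand side converges to $\pP_h(\vr)=e^{-\la_{N_h}\vr}|v_h|$, which gives the claimed limit. The asserted analogues of Corollary~\ref{cor:infinitos}, Corollary~\ref{cor:errorheatm45} and Corollary~\ref{cor:simpleheatm45} then follow by substituting the same identity into those statements and their proofs. I do not expect a genuine obstacle here; the only point deserving care is to confirm that the identity $\xX^\e_t(h/a_\e)=a_\e^{-1}\,\xX^\e_t(h)$ holds at the level of the (mild/strong) solution --- but this is immediate from the explicit representation \eqref{eq:formulilla}, and in the setting of Corollary~\ref{cor:infinitos} from the corresponding convergent series representation, since in both cases the solution is literally a linear operator applied to $h$.
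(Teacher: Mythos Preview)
Your proposal is correct and follows exactly the approach the paper intends: the paper gives no explicit proof for this corollary, but the sentence immediately preceding it (``The linearity of \eqref{e:multi1} in contrast of the affine structure of \eqref{e:OU} changes the reparametrization of the large initial value problem as follows'') is precisely your observation that $\xX^\e_t(h/a_\e)=a_\e^{-1}\xX^\e_t(h)$ together with the dilation invariance of $\mu^\e=\delta_0$ and the homogeneity of $\mathcal{W}_2$. Your remark distinguishing this from the affine additive case in Corollary~\ref{cor:largedata} is also on point.
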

% \bigskip
\subsection{Multiplicative L\'evy noise}\label{ss:HML}
{In this subsection we consider a L\'evy measure $\nu$
 on the space of diagonal operators in $L_2(H,H)$ with respect to the orthonormal base of eigenvectors $(e_j)_{j\in \mathbb{N}_0}$ of $A$, that is,
\begin{equation}\label{e:comutalev}
\textrm{$\textsf{supp}(\nu)$ is contained in the space of diagonal operators in $L_2(H,H)$ with respect to $(e_j)_{j\in \mathbb{N}_0}$}.
\end{equation}
We then consider a pure jump L\'evy process $(L_t)_{t\gqq 0}$,  $L_t=\int_{0}^t \int_{0< \|z\|<1}z\widetilde{N}(\ud z,\ud t)$,
  where $\widetilde{N}$ is the compensated Poisson random measure associated to $\nu$ on a given probability space $(\Omega, \mathcal{F},\mathbb{P})$ satisfying the usual conditions in the sense of Protter, see \cite{Protter}.
Fix $\eta\in(0,1)$ and let $(\xX^{(\eta, \e)}_t)_{t\gqq 0}$  be the mild solution of the operator valued equation in $L_2(H,H)$ of 
\begin{equation}\label{e:LMN99}
\left\{
\begin{array}{r@{\;=\;}l}
\ud \xX^{(\eta,\e)}_t&A \xX^{(\eta,\e)}_t\ud t +
\e\int_{\eta\lqq \|z\|<1}(z \xX^{(\eta,\e)}_{t-})\, \widetilde{N}(\ud z,\ud t)  \quad  
\textrm{  for any }\quad t\gqq 0,\\
\xX^{(\eta,\e)}_0 & I,
\end{array}
\right.
\end{equation}
where $I$ is the identity on $H$.
That is, it satisfies 
$\mathbb{P}$-almost surely for all $t\gqq 0$
\begin{equation}\label{e:mildsolt}
\begin{split}
\xX^{(\eta,\e)}_t=I+\e\int_{0}^{t} \int_{\eta\lqq \|z\|<1} S(t-s)(z \xX^{(\eta,\e)}_{s-})\, \widetilde{N}(\ud z,\ud s), 
\end{split}
\end{equation}
where $S$ is the semigroup defined in \eqref{e:semi}.
Since $\eta>0$, the existence and uniqueness of \eqref{e:mildsolt} is straightforward by a standard interlacing procedure for compound Poisson processes, see for instance \cite{APPLEBAUMBOOK}. 
}
{
Note that the evaluation $\xX^{(\eta,\e)}_t(h)$ of the unique mild solution of \eqref{e:mildsolt} is well-defined for all $h\in H$. However, a priori we do not have an analogously explicit representation as in \eqref{eq:formulilla}.
For $h\in D(A)$, we obtain the following explicit exponential representation of $(\xX^{(\eta, \e)}_t(h))_{t\gqq 0}$, which coincides with a strong solution of \eqref{e:LMN99}, see the proof of Lemma~\ref{lem:exprepresentlevy} in Appendix~\ref{Ap:A2}.
This representation turns out to be crucial for the cutoff result.}
\begin{lemma}\label{lem:exprepresentlevy}
We assume \eqref{e:comutalev}.
Then we have the following representation
\begin{equation}\label{e:stochexp}
\xX^{(\eta,\e)}_t(h)=\mathcal{E}^{(\eta,\e)}_{t}h,\qquad h \in {D(A)},\quad t\gqq 0,
\end{equation}
where 
$\mathcal{E}^{(\eta,\e)}_{t}:=\exp(\Theta^{(\eta,\e)}_t)$ and 
\begin{align*}
\Theta^{(\eta,\e)}_t&:=
t\Big(A+\e\int_{\eta\lqq \|z\|<1} (\log(I+z)-z) \nu(\ud z) \Big)+\e\int_{0}^{t}\int_{\eta\lqq \|z\|<1}
\log(I+z)\tilde{N}(\ud z, \ud s)
=:t\cdot\mathbb{A}^{(\eta,\e)}+\Lambda^{(\eta,\e)}_t
\end{align*}
{
with the power series 
\begin{equation}\label{eq:formalog}
\log(I+z):=\sum_{j\in \mathbb{N}}(-1)^{j-1}\frac{z^j}{j} \quad \textrm{ for }\|z\|<1.
\end{equation}
}
\end{lemma}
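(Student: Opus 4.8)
\noindent The plan is to exploit the two structural simplifications at hand. First, since $\eta>0$, equation~\eqref{e:LMN99} has a compound Poisson (interlacing) character: the L\'evy measure restricted to $\{\eta\lqq\|z\|<1\}$ is finite, so the associated Poisson random measure has, $\mathbb{P}$-a.s., only finitely many atoms $(\tau_k,z_k)_{k\gqq1}$ with $0<\tau_1<\tau_2<\cdots$ on every bounded interval. Second, by~\eqref{e:comutalev} every operator occurring below is diagonal with respect to $(e_j)_{j\in\NN_0}$, hence the whole computation reduces to a commutative one. I would begin by rewriting the compensated jump integral in~\eqref{e:LMN99} in non-compensated form: between consecutive jumps the operator-valued process solves the linear Cauchy problem $\tfrac{\ud}{\ud t}\xX_t=\mathbb{B}_\e\xX_t$ with $\mathbb{B}_\e:=A-\e\int_{\eta\lqq\|z\|<1}z\,\nu(\ud z)$, while across $\tau_k$ it satisfies $\xX_{\tau_k}=(I+\e z_k)\xX_{\tau_k-}$. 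Because $\int_{\eta\lqq\|z\|<1}z\,\nu(\ud z)$ is a bounded operator (norm at most $\nu(\{\eta\lqq\|z\|<1\})<\infty$), the bounded perturbation theorem yields a $C_0$-semigroup $(e^{t\mathbb{B}_\e})_{t\gqq0}$ with $D(\mathbb{B}_\e)=D(A)$; thus, evaluated at $h\in D(A)$, the interlacing solution (whose existence and uniqueness follows by the standard interlacing procedure, \cite{APPLEBAUMBOOK}) is in fact a strong solution, and iterating over the jumps gives, for $t\in[\tau_k,\tau_{k+1})$,
\[
\xX^{(\eta,\e)}_t=e^{(t-\tau_k)\mathbb{B}_\e}(I+\e z_k)\,e^{(\tau_k-\tau_{k-1})\mathbb{B}_\e}(I+\e z_{k-1})\cdots(I+\e z_1)\,e^{\tau_1\mathbb{B}_\e}.
\]

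The core step is then to collapse this product using commutativity. By~\eqref{e:comutalev} every $z_k$, the operator $\int z\,\nu(\ud z)$, and (by Hypothesis~\ref{h:sadj}) $A$ are simultaneously diagonal in $(e_j)_{j\in\NN_0}$, hence pairwise commuting, and in particular commute with the semigroup $(e^{t\mathbb{B}_\e})_{t\gqq0}$; moving all semigroup factors to the left gives $\xX^{(\eta,\e)}_t=e^{t\mathbb{B}_\e}\prod_{k:\tau_k\lqq t}(I+\e z_k)$. Since $\|\e z_k\|\lqq\|z_k\|<1$, the series~\eqref{eq:formalog} defines $\log(I+\e z_k)$ absolutely in operator norm with $\exp(\log(I+\e z_k))=I+\e z_k$, and as the logarithms mutually commute the product equals $\exp\big(\sum_{k:\tau_k\lqq t}\log(I+\e z_k)\big)=\exp\big(\int_0^t\!\int_{\eta\lqq\|z\|<1}\log(I+\e z)\,N(\ud z,\ud s)\big)$. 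Writing $e^{t\mathbb{B}_\e}=e^{tA}\exp\!\big(-\e t\int z\,\nu(\ud z)\big)$ as a product of commuting exponentials and passing the Poisson integral back into compensated form assembles every term into $\exp(\Theta^{(\eta,\e)}_t)$, which is~\eqref{e:stochexp}. The reverse reading --- that $t\mapsto\exp(\Theta^{(\eta,\e)}_t)h$ indeed solves~\eqref{e:mildsolt}, equivalently its strong form on $D(A)$ --- then follows either from uniqueness of the interlacing solution together with the identity just derived, or by differentiating directly: between jumps $\tfrac{\ud}{\ud t}\exp(\Theta^{(\eta,\e)}_t)h=\mathbb{A}^{(\eta,\e)}\exp(\Theta^{(\eta,\e)}_t)h$ for $h\in D(A)$ (using that $\Theta^{(\eta,\e)}_t$ has commuting increments, again by~\eqref{e:comutalev}), while across $\tau_k$ the increment of $\exp(\Theta^{(\eta,\e)}_t)h$ is $\e z_k\exp(\Theta^{(\eta,\e)}_{\tau_k-})h$, matching the jump prescribed by~\eqref{e:LMN99}.

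I expect the main obstacle to be the unbounded-operator bookkeeping: making rigorous that $\exp(\Theta^{(\eta,\e)}_t)$ is a well-defined operator (it is in fact bounded on all of $H$, since it factors as $S(t)$ composed with a bounded diagonal operator --- the restriction $h\in D(A)$ being needed only so that $Ah$, hence the strong form of~\eqref{e:LMN99}, is meaningful), that $e^{tA}$ genuinely commutes with the bounded diagonal factors, and that the rearrangement of the product and the termwise differentiation above are legitimate. The cleanest way around this is a reduction to the scalar picture: projecting~\eqref{e:LMN99} onto each one-dimensional eigenspace $\<e_j\>$ turns it into a scalar linear stochastic differential equation driven by a compound Poisson process, for which the stochastic-exponential formula is elementary; one then reassembles the coordinates, the only genuine work being the uniform-in-$j$ control of the products and series, which is supplied by the localization $\eta\lqq\|z\|<1$ together with the integrability encoded in~\eqref{e:comutalev}. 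A secondary point is to verify that the mild solution of~\eqref{e:mildsolt} and this interlacing solution coincide, which is standard once both are identified with the pathwise interlacing construction. These details would be carried out in Appendix~\ref{Ap:A2}.
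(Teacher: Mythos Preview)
Your approach is essentially the paper's: both exploit the compound-Poisson interlacing structure (your jump times $\tau_k$, the paper's $T_n$) together with the diagonal commutativity from~\eqref{e:comutalev} to reduce the flow to an explicit operator product, then collapse it via $\exp\circ\log$. The paper runs the argument in the opposite direction --- it starts from the candidate $\exp(\Theta_t)$ and verifies by a telescopic sum over the jump times that it solves~\eqref{e:LMN99} --- but the content is the same.

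There is, however, a concrete gap in your final assembly step. Your computation correctly produces the multiplicative factors $(I+\e z_k)$, hence $\log(I+\e z_k)$ in the jump sum; after compensating you arrive at
\[
\exp\Big(t\Big(A+\int_{\eta\lqq\|z\|<1}\big(\log(I+\e z)-\e z\big)\,\nu(\ud z)\Big)+\int_0^t\!\int_{\eta\lqq\|z\|<1}\log(I+\e z)\,\tilde N(\ud z,\ud s)\Big),
\]
whereas the stated $\Theta^{(\eta,\e)}_t$ carries $\e\log(I+z)$ rather than $\log(I+\e z)$. Since $\e\log(I+z)=\e z-\tfrac{\e}{2}z^2+\cdots$ while $\log(I+\e z)=\e z-\tfrac{\e^2}{2}z^2+\cdots$, these differ for $\e\neq 1$, so the claim ``assembles every term into $\exp(\Theta^{(\eta,\e)}_t)$'' does not go through as written. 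The paper's own proof treats only $\e=1$ and declares the general case ``without loss of generality'', which conceals exactly the same discrepancy; you should either state and prove the formula with $\log(I+\e z)$ (this is what your argument actually yields, and it is the correct Dol\'eans--Dade exponential in this commutative setting) or supply a genuine reduction to $\e=1$.
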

\noindent
The proof is given in Appendix~\ref{Ap:A2}. Note that since the jumps in \eqref{e:LMN99} are uniformly bounded, its solution has finite $p$-th moments for any $p>0$. 
Since $A$ satisfies Hypothesis~\ref{h:sadj}, Lemma~\ref{l:jaraheat} implies the following.
For each $h\in H$, $h\neq 0$, 
there exist an index $N_h\in \NN_0$  and $ v_h\in H$
such that $ v_h\neq 0$ and
\begin{equation}\label{e:jaraheatm09levy}
\lim\limits_{t\to \infty}|e^{t  \la_{N_h}}e^{t A} h- v_h|=0.
\end{equation}
Recall  that $\mu^\e=\delta_{0}$ for any $\e\in (0,1]$. Analogously to Theorem~\ref{t:multprofile} we have the following theorem.
\begin{theorem}[Profile cutoff for the stochastic heat equation with multiplicative L\'evy noise]\label{t:multprofilelevy}
Assume that $A$ satisfies Hypothesis~\ref{h:sadj}, {$h\in D(A)$}, $h\neq 0$, 
and \eqref{e:comutalev}.
Let $(a_\e)_{\e\in (0,1)}$ be a positive  function such that 
\begin{equation}\label{eq:doblelimitelevy}
\lim\limits_{\e\to 0}a_\e=\lim\limits_{\e\to 0} \e|\ln(a_\e)|=0.
\end{equation}
We keep the notation of \eqref{e:jaraheatm09} and define
\begin{equation}\label{e:tildetepsilonlevy}
 t_\e := \frac{1}{\la_{N_h}}|\ln(a_\e)|.  
\end{equation}
Then for any $\vr\in \RR$ we have the profile cutoff phenomenon 
\begin{align*}
\lim_{\e\ra 0}  \frac{\mathcal{W}_{2}(\xX^{(\e,\eta)}_{ t_\e + \vr} (h),\mu^\e)}{a_\e} = e^{-\la_{N_h} \vr} |v_h|=:\mathcal{P}_h(\vr).
\end{align*}
\end{theorem}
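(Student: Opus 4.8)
The plan is to run the proof of Theorem~\ref{t:multprofile} in the same order, replacing the Brownian flow formula \eqref{eq:formulilla} by the stochastic exponential representation \eqref{e:stochexp} of Lemma~\ref{lem:exprepresentlevy} (this is exactly where the hypothesis $h\in D(A)$ is used), and replacing the Gaussian Laplace transform identity by the L\'evy--Khintchine type exponential formula for operator-valued compensated Poisson integrals with diagonal, hence mutually commuting, integrands. Since $\mu^\e=\delta_0$, we have as in \eqref{eq:distancia} that $\mathcal W_2^2(\xX^{(\eta,\e)}_t(h),\mu^\e)=\mathbb E[|\xX^{(\eta,\e)}_t(h)|^2]$, and by \eqref{e:stochexp} $\xX^{(\eta,\e)}_t(h)=\exp(\Theta^{(\eta,\e)}_t)h$ with $\Theta^{(\eta,\e)}_t=t\,\mathbb A^{(\eta,\e)}+\Lambda^{(\eta,\e)}_t$. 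By \eqref{e:comutalev} the operator $A$, the compensator drift, and every jump $\log(I+z)$ are diagonal in $(e_j)_{j\in\NN_0}$, so all operators appearing commute and commute with their adjoints.

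First I reduce to a second moment. Commutativity gives $\exp(\Theta^{(\eta,\e)}_t)^*\exp(\Theta^{(\eta,\e)}_t)=\exp\big(t(\mathbb A^{(\eta,\e)}+(\mathbb A^{(\eta,\e)})^*)\big)\,\exp\big(\Lambda^{(\eta,\e)}_t+(\Lambda^{(\eta,\e)}_t)^*\big)$, and pulling the deterministic factor out of the expectation yields
\[
\mathbb E[|\xX^{(\eta,\e)}_t(h)|^2]=\Big\langle h,\ \exp\big(t(\mathbb A^{(\eta,\e)}+(\mathbb A^{(\eta,\e)})^*)\big)\,\mathbb E\big[\exp\big(\Lambda^{(\eta,\e)}_t+(\Lambda^{(\eta,\e)}_t)^*\big)\big]\,h\Big\rangle .
\]
Writing $F(z):=\log(I+z)+(\log(I+z))^*$ (diagonal self-adjoint) we have $\Lambda^{(\eta,\e)}_t+(\Lambda^{(\eta,\e)}_t)^*=\e\int_0^t\int_{\eta\le\|z\|<1}F(z)\,\widetilde N(\ud z,\ud s)$, a finite-intensity compensated Poisson integral with diagonal integrand, so the exponential formula applied componentwise gives $\mathbb E[\exp(\Lambda^{(\eta,\e)}_t+(\Lambda^{(\eta,\e)}_t)^*)]=\exp\big(t\int_{\eta\le\|z\|<1}(e^{\e F(z)}-I-\e F(z))\,\nu(\ud z)\big)$. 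Since moreover $\mathbb A^{(\eta,\e)}+(\mathbb A^{(\eta,\e)})^*=2A+\e\int(F(z)-z-z^*)\,\nu(\ud z)$ and the resulting exponent is diagonal self-adjoint, one concludes
\[
\mathcal W_2(\xX^{(\eta,\e)}_t(h),\mu^\e)=\big|\exp\big(t\,\widetilde{\mathbb A}^{(\eta,\e)}\big)h\big|,\qquad \widetilde{\mathbb A}^{(\eta,\e)}:=A+\e\,B_\e,
\]
where $B_\e:=\tfrac12\int(F(z)-z-z^*)\,\nu(\ud z)+\tfrac1{2\e}\int(e^{\e F(z)}-I-\e F(z))\,\nu(\ud z)$ is diagonal self-adjoint. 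Using $|1+z_j|<2$ for $\|z\|<1$ and elementary estimates on the entrywise map $a\mapsto(a^{\e}-1)/\e-\ln a$, together with the integrability of $\log(I+z)$ against $\nu$ on $\{\eta\le\|z\|<1\}$ that underlies Lemma~\ref{lem:exprepresentlevy}, one checks that $\sup_{\e\in(0,1)}\|B_\e\|<\infty$. This $B_\e$ is the L\'evy analogue of $\tfrac12\sum_iG_iG_i^*$ in Theorem~\ref{t:multprofile}, but it now perturbs $A$ at first order in $\e$, which is why the scaling hypothesis \eqref{eq:doblelimitelevy} only asks $\e|\ln a_\e|\to0$ rather than $\e^2|\ln a_\e|\to0$.

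Finally I pass to the limit exactly as for the heat equation with multiplicative Brownian noise. Put $T_\e:=t_\e+\vr$, so $T_\e\to\infty$ and, by \eqref{e:tildetepsilonlevy} and \eqref{eq:doblelimitelevy}, $\e T_\e=\e|\ln a_\e|/\la_{N_h}+\e\vr\to0$. By commutativity $\exp(T_\e\widetilde{\mathbb A}^{(\eta,\e)})=\exp(\e T_\e B_\e)\exp(T_\e A)$ with $\|\exp(\e T_\e B_\e)-I\|\le e^{\e T_\e\|B_\e\|}-1\to0$. Since $a_\e^{-1}=e^{\la_{N_h}t_\e}=e^{-\la_{N_h}\vr}e^{\la_{N_h}T_\e}$, moving $e^{\la_{N_h}T_\e}\exp(T_\e A)$ past $\exp(\e T_\e B_\e)$ gives
\[
\frac{\mathcal W_2(\xX^{(\eta,\e)}_{t_\e+\vr}(h),\mu^\e)}{a_\e}=e^{-\la_{N_h}\vr}\,\big|\exp(\e T_\e B_\e)\,\big(e^{\la_{N_h}T_\e}\exp(T_\e A)h\big)\big| .
\]
By \eqref{e:jaraheatm09levy} (i.e. Lemma~\ref{l:jaraheat}) and $T_\e\to\infty$ we have $e^{\la_{N_h}T_\e}\exp(T_\e A)h\to v_h$ in $H$, and combined with $\exp(\e T_\e B_\e)\to I$ in operator norm the right-hand side tends to $e^{-\la_{N_h}\vr}|v_h|$, as claimed. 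The only genuinely new point compared with Theorem~\ref{t:multprofile} is the computation above: justifying the operator-valued exponential formula for the diagonal Poisson integral and, more delicately, the uniform bound $\sup_{\e}\|B_\e\|<\infty$, which hinges on the behaviour of $\log(I+z)$ near $\|z\|=1$; everything afterwards is the Brownian argument line by line.
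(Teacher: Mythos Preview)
Your proof is correct and follows the same strategy as the paper: use the stochastic exponential representation of Lemma~\ref{lem:exprepresentlevy}, compute $\mathbb E[|\xX^{(\eta,\e)}_t(h)|^2]$ via commutativity and a Campbell-type exponential-moment identity for the diagonal compensated Poisson integral, write the result as $|\exp(tA+\text{remainder})h|$, show the remainder vanishes along $t_\e+\vr$ using $\e t_\e\to 0$, and conclude with Lemma~\ref{l:jaraheat}. The one organisational difference is in the Campbell step: you obtain the standard time-linear exponent $t\int(e^{\e F(z)}-I-\e F(z))\,\nu(\ud z)$ and hence a remainder of the form $\e T_\e B_\e$ with $B_\e$ bounded near $\e=0$, whereas the paper writes $(I+z^*)^{t\e}(I+z)^{t\e}$ inside the $\nu$-integral and controls the resulting non-time-linear $\mathcal D^{(\e)}_t$ through the expansion \eqref{eq:expet}--\eqref{eq:te1}. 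Your formulation makes the first-order (rather than second-order) dependence on $\e$, and thus the precise role of hypothesis \eqref{eq:doblelimitelevy}, more transparent; otherwise the two arguments coincide, including the implicit integrability of $\log(I+z)$ against $\nu$ on $\{\eta\le\|z\|<1\}$ that both proofs rely on and that you rightly flag as the delicate point.
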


\begin{proof}
In the sequel, we compute $\mathbb{E}[|\xX^{}_t(h)|^2]$. Due to \eqref{eq:distancia} 
we start with the computation of the right-hand side of \eqref{eq:distancia}.  
\begin{align*}
\mathbb{E}[|\xX^{(\eta,\e)}_t(h)|^2]&=\mathbb{E}\big[\big\<\xX^{(\eta,\e)}_t(h),\xX^{(\eta,\e)}_t(h)\big\>\big]=
\mathbb{E}\big[\big\<\exp\big(\Theta^{(\eta,\e)}_t\big)h,\exp\big(\Theta^{(\eta,\e)}_t\big)h\big\>\big]\\
&=\mathbb{E}\big[\big\<h,\exp\big(\big(\Theta^{(\eta,\e)}_t\big)^{*}\big)\exp\big(\Theta^{(\eta,\e)}_t\big)h\big\>]\\
&=\mathbb{E}\big[\big\<h,\exp\big(t\cdot(\mathbb{A}^{(\eta,\e)})^*\big)\exp\big((\Lambda^{(\eta,\e)}_t)^*\big)\exp\big(t\cdot\mathbb{A}^{(\eta,\e)}\big)\exp\big(\Lambda^{(\eta,\e)}_t\big)h\big\>\big]\\
&=\mathbb{E}\big[\big\<h,\exp\big(t\cdot(\mathbb{A}^{(\eta,\e)})^*\big)\exp\big((\Lambda^{(\eta,\e)}_t)^*\big)\exp\big(\Lambda^{(\eta,\e)}_t\big)\exp\big(t\cdot\mathbb{A}^{(\eta,\e)}\big)h\big\>\big]\\
&=\big\<h,\exp\big(t\cdot(\mathbb{A}^{(\eta,\e)})^*\big)
\mathbb{E}\big[\exp\big((\Lambda^{(\eta,\e)}_t)^*\big)\exp\big(\Lambda^{(\eta,\e)}_t\big)\big]\exp\big(t\cdot\mathbb{A}^{(\eta,\e)}\big)h\big\>,
\end{align*}
where we use \eqref{e:comutalev}.
By \eqref{e:comutalev} and Campbell's formula  for exponential moments, see for instance \cite{APPLEBAUMBOOK}, we have 
\begin{align}
\mathbb{E}&\big[\exp\big((\Lambda^{(\eta,\e)}_t)^*\big)\exp\big(\Lambda^{(\eta,\e)}_t\big)\big]\nonumber\\
&=
\mathbb{E}\Big[
\exp\Big(\e\int_{0}^{t}\int_{\eta\lqq \|z\|<1}
(\log(I+z^*)+
\log(I+z))\tilde{N}(\ud z, \ud s)
\Big)\Big]\nonumber\\
&=
\exp\Big(\int_{\eta\lqq \|z\|<1}
[\exp(t \e\log(I+z^*)+t \e\log(I+z))-I-
t \e(\log(I+z^*)+\log(I+z))]\nu(\ud z)
\Big)\nonumber\\
&=
\exp\Big( \int_{\eta\lqq \|z\|<1}
[(I+z^*)^{t\e}(I+z)^{t\e}-I-t\e\log((I+z^*)(I+z))]\nu(\ud z)
\Big).\label{eq:expbiendef}
\end{align}
The preceding exponent is well defined due to the series expansion of the logarithm 
\begin{equation}\label{eq:expet}
\begin{split}
(I+z^*)^{t\e}&(I+z)^{t\e}-I-t\e\log((I+z^*)(I+z)) \\
&=
\big(I+t\e z^*+\frac{t\e(t\e-1)}{2!}(z^*)^2+
\frac{t\e(t\e-1)(t\e-2)}{3!}(z^*)^3+\cdots\big)\cdot\\
&\qquad\qquad
\cdot \big(I+t\e z+\frac{t\e(t\e-1)}{2!}z^2+
\frac{t\e(t\e-1)(t\e-2)}{3!}z^3+\cdots\big)\\
&\qquad
-I - t\e\big((z^* + z + z^* z) - \frac{(z^* + z + z^* z)^2}{2}+\frac{(z^* + z + z^* z)^3}{3}-\cdots\big) \\
&=(t^2\e^2-2t\e)(zz^*)+\frac{t\e(t\e-2)}{2}(z^*)^2+\frac{t\e(t\e-2)}{2}z^2+ g(t\e,z,z^*),  
\end{split}
\end{equation}
where 
$g(t\e, z,z^*)$ is such that there exists a positive constant $C(t\e)$  satisfying
\[
\|g(t\e, z,z^*)\|\lqq C(t\e) \|z\|^3 \quad \textrm{for all }\quad \|z\|<1.
\]
For $t\e<1$ the constant $C(t\e)$ can be dominated by $C\cdot t\e$ for some absolute positive constant $C$, that is,
\begin{equation}\label{eq:te1}
\|g(t\e, z,z^*)\|\lqq C\cdot t\e \|z\|^3 \quad \textrm{for all }\quad \|z\|<1.
\end{equation}
Hence there is a constant $C_2>0$ such that 
\[
\|(I+z^*)^{t\e}(I+z)^{t\e}-I-t\e\log((I+z^*)(I+z))\|\lqq C_2 \|z\|^2 \quad \textrm{for all }\quad \|z\|<1.
\]
This guarantees the finiteness of
  \eqref{eq:expbiendef}.
Putting all pieces together, we obtain
\begin{align*}
&\mathbb{E}[|\xX^{(\eta,\e)}_t(h)|^2]\\
&=\Big\<h,\exp\big(t\cdot(\mathbb{A}^{(\eta,\e)})^*\big)
\exp\big( \int_{\eta\lqq \|z\|<1}
[(I+z^*)^{t\e}(I+z)^{t\e}-I-t\e\log((I+z^*)(I+z))]\nu(\ud z)
\big)\exp\big(t\cdot \mathbb{A}^{(\eta,\e)}\big)h\Big\>\\
&=\Big\<\exp\big(t\mathbb{A}^{(\eta,\e)}+\frac{1}{2} \int_{\eta\lqq \|z\|<1}
[(I+z^*)^{t\e}(I+z)^{t\e}-I-t\e\log((I+z^*)(I+z))]\nu(\ud z)
\big)h,\Big.\\
&\qquad\Big.
\exp\big(t\mathbb{A}^{(\eta,\e)}+\frac{1}{2} \int_{\eta\lqq \|z\|<1}
[(I+z^*)^{t\e}(I+z)^{t\e}-I-t\e\log((I+z^*)(I+z))]\nu(\ud z)
\big)h\Big\>\\
&=
|\exp\big(\widetilde{\mathcal{A}}^{(\eta,\e)}_t
\big)h|^2,
\end{align*}
where 
\begin{align*}
\widetilde{\mathcal{A}}^{(\eta,\e)}_t:&=
t\mathbb{A}^{(\eta,\e)}+\frac{1}{2} \int_{\eta\lqq \|z\|<1}
[(I+z^*)^{t\e}(I+z)^{t\e}-I-t\e\log((I+z^*)(I+z))]\nu(\ud z)\\
&=t\big(A+\e\int_{\eta\lqq \|z\|<1} (\log(I+z)-z) \nu(\ud z) \big)\\
&\qquad \qquad+\frac{1}{2} \int_{\eta\lqq \|z\|<1}
[(I+z^*)^{t\e}(I+z)^{t\e}-I-t\e\log((I+z^*)(I+z))]\nu(\ud z)\\
&=tA+\mathcal{D}^{(\e)}_t,
\end{align*}
with
\[
\mathcal{D}^{(\e)}_t:=t\e\int_{\eta\lqq \|z\|<1} (\log(I+z)-z) \nu(\ud z)+\frac{1}{2} \int_{\eta\lqq \|z\|<1}
[(I+z^*)^{t\e}(I+z)^{t\e}-I-t\e\log((I+z^*)(I+z))]\nu(\ud z).
\]
By \eqref{eq:expet} and \eqref{eq:te1} we have  $\|\mathcal{D}^{(\e)}_{t_\e + \vr}\|\to 0$ as $\e \to 0$ for any $\vr$.
Since $\frac{1}{a_\e} = e^{ \la_{N_h}  t_\e}$, 
we obtain
\begin{align*}
\frac{\mathcal{W}_2(\xX^{(\eta,\e)}_{ t_\e + \vr}(h), \mu^\e)}{a_\e}
&=  \frac{1}{a_\e}|\exp(\widetilde \aA^{(\eta,\e)}_{t_\e + \vr})h|=  e^{- \la_{N_h} \vr} |e^{ \la_{N_h} ( t_\e + \vr)} \exp( \widetilde \aA^{(\eta,\e)}_{t_\e + \vr})h|\\
&
=e^{- \la_{N_h} \vr} |\exp(\mathcal{D}^{(\e)}_{t_\e + \vr})e^{ \la_{N_h} ( t_\e + \vr)} \exp(( t_\e + \vr)  A)h|
 \ra e^{- \la_{N_h} \vr} |  v_h|, \quad \textrm{ as }\quad \e \ra 0,
\end{align*}
where in the preceding limit we have used
\eqref{e:jaraheatm09levy} and the limit \eqref{eq:doblelimitelevy}.
\end{proof}

\noindent The following lemma extends Lemma~\ref{lem:exprepresentlevy} passing to the limit $\eta \ra 0$, which can be carried out by a standard stochastic analysis for Poisson random measures. 
\begin{lemma}\label{lem:exprepresentlevyinfint}
We consider $\nu$ being a L\'evy measure satisfying {\eqref{e:comutalev}} and
\begin{equation}\label{eq:zero1}
\int_{0<\|z\|<1} \|z\|^2\nu(\ud z)<\infty \qquad \mbox{ and } \qquad \nu(\{0\}) = 0.
\end{equation}  
Let $(\xX^\e_t)_{t\gqq 0}$ be the unique {mild} solution of
\begin{equation}\label{e:multi1levy0894}
\left\{
\begin{array}{r@{\;=\;}l}
\ud \xX^\e_t &A \xX^\e_t \ud t+  
\e \int_{0< \|z\|<1}(z \xX^{\e}_{t-})\, \widetilde{N}(\ud z,\ud t)  \quad  
\textrm{  for any }\quad t\gqq 0,\\
\xX^\e_0 & I,
\end{array}
\right.
\end{equation}
{in the sense of \eqref{e:mildsolt}.}
Then
we have the following representation
\begin{equation}\label{e:stochexp89}
\xX^{\e}_t(h)=\mathcal{E}^{\e}_{t}h,\qquad {h \in D(A)},\quad t\gqq 0,
\end{equation}
where 
$\mathcal{E}^{\e}_{t}:=\exp(\Theta^{\e}_t)$ and 
\begin{align*}
\Theta^{\e}_t&:=
t\Big(A+\e\int_{0 < \|z\|<1} (\log(I+z)-z) \nu(\ud z) \Big)+\e\int_{0}^{t}\int_{0< \|z\|<1}
\log(I+z)\tilde{N}(\ud z, \ud s)
%=:t\cdot\mathbb{A}^{\e}+\Lambda^{\e}_t
\end{align*}
{
with $\log(I+z)$ for $\|z\|<1$ is given in \eqref{eq:formalog}.}
\end{lemma}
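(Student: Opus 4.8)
The strategy is to obtain \eqref{e:stochexp89} by letting the small-jump cutoff $\eta\ra 0$ in the truncated representation \eqref{e:stochexp} of Lemma~\ref{lem:exprepresentlevy}, the whole procedure being governed by the square-integrability near the origin in \eqref{eq:zero1}. I would split the argument into an approximation of the mild solution, an approximation of the exponent, and a passage to the limit inside the exponential; fix $h\in D(A)$ and $t\gqq 0$ throughout.

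\emph{Step 1 (stability of the mild solution).} Working with the $H$-valued evaluations $\xX^{(\eta,\e)}_\cdot(h)$ and $\xX^\e_\cdot(h)$, I would first show that $\xX^{(\eta,\e)}_t(h)\ra\xX^\e_t(h)$ in $L^2(\Omega;H)$ as $\eta\ra 0$. Subtracting the mild formulations in the sense of \eqref{e:mildsolt}, the difference satisfies a linear mild equation whose inhomogeneity is the stochastic convolution against the jumps with $0<\|z\|<\eta$ only. The It\^o isometry for compensated Poisson integrals, the bound $\|S(s)\|\lqq e^{-\la_0 s}$ from Hypothesis~\ref{h:sadj} and \eqref{eq:zero1} then give
\[
\begin{aligned}
\EE\big[\|\xX^{(\eta,\e)}_t(h)-\xX^\e_t(h)\|^2\big]\ &\lqq\ C(t,h)\int_{0<\|z\|<\eta}\|z\|^2\,\nu(\ud z)\\
&\qquad+\ C\int_0^t\EE\big[\|\xX^{(\eta,\e)}_s(h)-\xX^\e_s(h)\|^2\big]\,\ud s,
\end{aligned}
\]
where the a priori bound $\sup_{s\lqq t}\EE[\|\xX^\e_s(h)\|^2]<\infty$ entering the constant $C(t,h)$ is obtained by the same estimate. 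Gr\"onwall's inequality together with $\int_{0<\|z\|<\eta}\|z\|^2\,\nu(\ud z)\ra 0$ then yields the claim.

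\emph{Step 2 (stability of the exponent).} Since $\|\log(I+z)-z\|\lqq C\|z\|^2$ and $\|\log(I+z)\|\lqq C\|z\|$ for $\|z\|<1$, condition \eqref{eq:zero1} shows that the drift correction $\e\int_{\eta\lqq\|z\|<1}(\log(I+z)-z)\,\nu(\ud z)$ converges absolutely to $\e\int_{0<\|z\|<1}(\log(I+z)-z)\,\nu(\ud z)$, and that $\e\int_0^t\int_{\eta\lqq\|z\|<1}\log(I+z)\,\widetilde{N}(\ud z,\ud s)$ is Cauchy in $L^2(\Omega;L_2(H,H))$ with limit $\e\int_0^t\int_{0<\|z\|<1}\log(I+z)\,\widetilde{N}(\ud z,\ud s)$, again by the It\^o isometry. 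This defines $\Theta^\e_t$ and hence $\mathcal{E}^\e_t=\exp(\Theta^\e_t)$, and shows $\Theta^{(\eta,\e)}_t\ra\Theta^\e_t$ as $\eta\ra 0$ in the sense that the bounded (random and deterministic) summands converge in $L^2(\Omega;L_2(H,H))$ while the common unbounded summand $tA$ is unchanged.

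\emph{Step 3 (passing to the limit inside the exponential).} This is the point I expect to be the main obstacle, because $\Theta^\e_t$ contains the unbounded operator $tA$, so the naive continuity of $\exp$ on bounded operators is not available. The remedy is the commutativity \eqref{e:comutalev}: every $z\in\textsf{supp}(\nu)$, and therefore $\log(I+z)$, $A$, $\Theta^{(\eta,\e)}_t$ and $\Theta^\e_t$, is diagonal with respect to $(e_j)_{j\in\NN_0}$. Writing $h=\sum_j\<h,e_j\>e_j$ and letting $\theta^{(\eta,\e)}_j(t)$, $\theta^\e_j(t)$ be the $j$-th diagonal entries of $\Theta^{(\eta,\e)}_t$, $\Theta^\e_t$, the two representations read $\xX^{(\eta,\e)}_t(h)=\sum_j\<h,e_j\>e^{\theta^{(\eta,\e)}_j(t)}e_j$ and $\mathcal{E}^\e_t h=\sum_j\<h,e_j\>e^{\theta^\e_j(t)}e_j$, which reduces matters to a componentwise limit followed by a summation. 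Step~2 gives $\theta^{(\eta,\e)}_j(t)\ra\theta^\e_j(t)$ in $L^2(\Omega)$, hence $e^{\theta^{(\eta,\e)}_j(t)}\ra e^{\theta^\e_j(t)}$ in probability, for every $j$; and Campbell's exponential-moment formula, used exactly as in the derivation of \eqref{eq:expbiendef}, bounds $\EE[|e^{\theta^{(\eta,\e)}_j(t)}|^2]\lqq \widetilde{C}(t)\,e^{-2\la_j t}$ uniformly in $\eta\in(0,1)$, since $\theta^{(\eta,\e)}_j(t)$ has deterministic part $-\la_j t+O(1)$ and a compensated-jump part whose exponential moments are dominated by $\int_{0<\|z\|<1}\|z\|^2\,\nu(\ud z)$. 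As $\sum_j\<h,e_j\>^2<\infty$ (and $\sum_j\la_j^2\<h,e_j\>^2<\infty$ for $h\in D(A)$, which is what makes $\Theta^\e_t h$ a genuine element of $H$ and the strong-solution reading of \eqref{e:multi1levy0894} meaningful), dominated convergence over $j$ yields $\xX^{(\eta,\e)}_t(h)\ra\mathcal{E}^\e_t h$ in $L^2(\Omega;H)$. Comparing with Step~1 and using uniqueness of $L^2$-limits, we conclude $\xX^\e_t(h)=\mathcal{E}^\e_t h$ almost surely, for every $h\in D(A)$ and $t\gqq 0$, which is exactly \eqref{e:stochexp89}.
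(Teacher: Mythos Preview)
Your proposal is correct and follows exactly the route the paper indicates: the paper does not give a proof of this lemma beyond the sentence ``extends Lemma~\ref{lem:exprepresentlevy} passing to the limit $\eta\ra 0$, which can be carried out by a standard stochastic analysis for Poisson random measures,'' and your three-step argument (stability of the mild solution via It\^o isometry and Gr\"onwall, stability of the exponent via the quadratic bounds on $\log(I+z)-z$, and passage to the limit inside the exponential via the diagonal structure \eqref{e:comutalev}) is precisely such a standard argument. One small point to tighten in Step~3: convergence in probability of $e^{\theta^{(\eta,\e)}_j(t)}$ together with a uniform $L^2$ bound does not by itself give $L^2$ convergence of each component; you should invoke the uniform $L^{2+\delta}$ bounds that Campbell's formula equally provides (all exponential moments are finite and controlled by $\int_{0<\|z\|<1}\|z\|^2\,\nu(\ud z)$) to obtain uniform integrability, after which your dominated-convergence-in-$j$ argument goes through.
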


\noindent
As a consequence of Lemma~\ref{lem:exprepresentlevyinfint} we have the profile cutoff phenomenon for infinite intensity case of $\nu$.
\begin{corollary}\label{cor:infinitoslevy}
We consider $\nu$ being a L\'evy measure satisfying \eqref{e:comutalev} and \eqref{eq:zero1}.
Let $A$ satisfy Hypothesis~\ref{h:sadj} and {$h\in D(A)$}, $h\neq 0$, and consider {the unique mild solution of}
\begin{equation}\label{e:multi1levy}
\left\{
\begin{array}{r@{\;=\;}l}
\ud \xX^\e_t &A \xX^\e_t \ud t+  
\e \int_{0< \|z\|<1}(z \xX^{\e}_{t-})\, \widetilde{N}(\ud z,\ud t)  \quad  
\textrm{  for any }\quad t\gqq 0,\\
\xX^\e_0 & {I},
\end{array}
\right.
\end{equation}
{in the sense of \eqref{e:mildsolt}.}
Furthermore, we assume \eqref{e:comutalev}.
Then for any constant $\vr\in \RR$
and any positive function $(a_\e)_{\e\in (0,1)}$ satisfying 
\eqref{eq:doblelimitelevy},
 we have the profile cutoff phenomenon 
\begin{align*}
\lim_{\e\ra 0} 
\frac{\mathcal{W}_{2}(\xX^\e_{ t_\e + \vr} (h),\mu^\e)}{a_\e} = e^{- \la_{N_h} \vr} | v_h|=:\mathcal{P}_h(\vr),  
\end{align*}
where $ t_\e$ is defined in \eqref{e:tildetepsilonlevy}, and $\la_{N_h}$ and $v_h$ are given in \eqref{e:jaraheatm09levy}.
\end{corollary}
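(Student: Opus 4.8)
The plan is to reproduce the argument of Theorem~\ref{t:multprofilelevy} line by line, substituting the full exponential representation of Lemma~\ref{lem:exprepresentlevyinfint} for the truncated one of Lemma~\ref{lem:exprepresentlevy}, and replacing the finite-intensity estimates by the quadratic moment bound \eqref{eq:zero1}. Since $h\in D(A)$, Lemma~\ref{lem:exprepresentlevyinfint} gives $\xX^\e_t(h)=\exp(\Theta^\e_t)h$ with $\Theta^\e_t=t\mathbb{A}^\e+\Lambda^\e_t$, where $\mathbb{A}^\e=A+\e\int_{0<\|z\|<1}(\log(I+z)-z)\,\nu(\ud z)$ and $\Lambda^\e_t=\e\int_0^t\int_{0<\|z\|<1}\log(I+z)\,\tilde N(\ud z,\ud s)$; by \eqref{e:comutalev} all of these operators are diagonal with respect to $(e_j)_{j\in\NN_0}$, hence they commute with one another and with their adjoints. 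Using $\mu^\e=\delta_0$ together with \eqref{eq:distancia}, so that $\mathcal{W}_2^2(\xX^\e_t(h),\mu^\e)=\EE[|\xX^\e_t(h)|^2]$, the same adjoint manipulation as in the proof of Theorem~\ref{t:multprofilelevy} yields
\[
\EE[|\xX^\e_t(h)|^2]=\big\langle h,\exp\big(t(\mathbb{A}^\e)^*\big)\,\EE\big[\exp\big((\Lambda^\e_t)^*\big)\exp\big(\Lambda^\e_t\big)\big]\,\exp\big(t\mathbb{A}^\e\big)h\big\rangle .
\]

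Next I would evaluate the exponential moment via Campbell's formula, exactly as in Theorem~\ref{t:multprofilelevy}: since $(\Lambda^\e_t)^*$ and $\Lambda^\e_t$ commute,
\[
\EE\big[\exp\big((\Lambda^\e_t)^*\big)\exp\big(\Lambda^\e_t\big)\big]=\exp\Big(\int_{0<\|z\|<1}\big[(I+z^*)^{t\e}(I+z)^{t\e}-I-t\e\log((I+z^*)(I+z))\big]\nu(\ud z)\Big),
\]
and I must check that the exponent is a bounded operator. The expansion \eqref{eq:expet}--\eqref{eq:te1} shows that for $t\e<1$ one has $\|(I+z^*)^{t\e}(I+z)^{t\e}-I-t\e\log((I+z^*)(I+z))\|\lqq C\,t\e\,\|z\|^2$ uniformly for $\|z\|<1$ (the quadratic terms carry the coefficient $t\e(t\e-2)$, bounded in modulus by $2t\e$, and the remainder $g$ is $O(t\e\|z\|^3)$), so the exponent is dominated in norm by $C\,t\e\int_{0<\|z\|<1}\|z\|^2\,\nu(\ud z)<\infty$ by \eqref{eq:zero1}. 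This is precisely the step where the quadratic moment condition replaces the finite-intensity bound used in Theorem~\ref{t:multprofilelevy}. Collecting the pieces and exploiting commutativity to express the resulting quadratic form as a squared norm, I obtain $\EE[|\xX^\e_t(h)|^2]=|\exp(\widetilde{\mathcal{A}}^\e_t)h|^2$ with $\widetilde{\mathcal{A}}^\e_t=tA+\mathcal{D}^\e_t$ and
\[
\mathcal{D}^\e_t=t\e\int_{0<\|z\|<1}(\log(I+z)-z)\,\nu(\ud z)+\tfrac12\int_{0<\|z\|<1}\big[(I+z^*)^{t\e}(I+z)^{t\e}-I-t\e\log((I+z^*)(I+z))\big]\nu(\ud z).
\]

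Finally I would carry out the cutoff limit exactly as in Theorem~\ref{t:multprofilelevy}. Since $A$ and $\mathcal{D}^\e_{t_\e+\vr}$ are both diagonal they commute, hence $\exp(\widetilde{\mathcal{A}}^\e_{t_\e+\vr})=\exp(\mathcal{D}^\e_{t_\e+\vr})\exp((t_\e+\vr)A)$, and since $a_\e^{-1}=e^{\la_{N_h}t_\e}$,
\[
\frac{\mathcal{W}_2(\xX^\e_{t_\e+\vr}(h),\mu^\e)}{a_\e}=e^{-\la_{N_h}\vr}\big|\exp\big(\mathcal{D}^\e_{t_\e+\vr}\big)\,e^{\la_{N_h}(t_\e+\vr)}\exp\big((t_\e+\vr)A\big)h\big|.
\]
By \eqref{eq:doblelimitelevy} we have $t_\e+\vr\to\infty$ and $(t_\e+\vr)\e=\la_{N_h}^{-1}\e|\ln(a_\e)|+\vr\e\to 0$, so the estimates above give $\|\mathcal{D}^\e_{t_\e+\vr}\|\to 0$ and thus $\exp(\mathcal{D}^\e_{t_\e+\vr})\to I$ in operator norm, while \eqref{e:jaraheatm09levy} gives $e^{\la_{N_h}(t_\e+\vr)}\exp((t_\e+\vr)A)h\to v_h$; combining the two limits yields $\lim_{\e\to 0}\mathcal{W}_2(\xX^\e_{t_\e+\vr}(h),\mu^\e)/a_\e=e^{-\la_{N_h}\vr}|v_h|=\mathcal{P}_h(\vr)$. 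The only genuinely new point relative to Theorem~\ref{t:multprofilelevy} is the control of the accumulation of infinitely many small jumps as $\eta\to 0$, and I expect the main obstacle to be the verification that the drift correction $\int_{0<\|z\|<1}\|\log(I+z)-z\|\,\nu(\ud z)$ and the exponent above are finite, that is, that the passage to $\eta=0$ is legitimate — which is exactly what Lemma~\ref{lem:exprepresentlevyinfint} supplies, in combination with the $O(\|z\|^2)$ estimates and \eqref{eq:zero1}.
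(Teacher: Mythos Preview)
Your proposal is correct and follows exactly the approach indicated by the paper, whose proof consists of the single sentence ``The proof follows the lines of the proof of Theorem~\ref{t:multprofilelevy}.'' You have simply spelled out in detail what that sentence means: replace Lemma~\ref{lem:exprepresentlevy} by Lemma~\ref{lem:exprepresentlevyinfint}, replace the integrals over $\eta\lqq\|z\|<1$ by integrals over $0<\|z\|<1$, and observe that the $O(\|z\|^2)$ estimates together with \eqref{eq:zero1} keep all the relevant operator integrals finite and drive $\|\mathcal{D}^\e_{t_\e+\vr}\|\to 0$ under \eqref{eq:doblelimitelevy}.
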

\begin{proof}
The proof follows the lines of the proof of Theorem~\ref{t:multprofilelevy}. 
\end{proof}

\begin{corollary}
Let the assumptions of Corollary~\ref{cor:infinitoslevy} be satisfied.
Then the analogues of Corollary~\ref{cor:infinitos},  Corollary~\ref{cor:errorheatm45}, Corollary~\ref{cor:simpleheatm45} and Corollary~\ref{cor:heatmuchosm45} are valid.
\end{corollary}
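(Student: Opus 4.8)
The plan is to read all four assertions directly off the exponential representation that underlies Corollary~\ref{cor:infinitoslevy}. Repeating the computation in the proof of Theorem~\ref{t:multprofilelevy}, but now with the integrals extended to $\{0<\|z\|<1\}$ (which is legitimate by \eqref{eq:zero1} and the series bounds \eqref{eq:expet}--\eqref{eq:te1}) and using Lemma~\ref{lem:exprepresentlevyinfint}, one obtains for $h\in D(A)$
\[
\mathbb{E}[|\xX^\e_t(h)|^2]=|\exp(\widetilde{\aA}^{\e}_t)h|^2,\qquad \widetilde{\aA}^{\e}_t=tA+\mathcal{D}^{(\e)}_t,
\]
where, by Hypothesis~\ref{h:sadj} and \eqref{e:comutalev}, the operators $A$ and $\mathcal{D}^{(\e)}_t$ are simultaneously diagonal with respect to $(e_j)_{j\in\NN_0}$, so that $\exp(\widetilde{\aA}^{\e}_t)=\exp(\mathcal{D}^{(\e)}_t)\exp(tA)$. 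First I would upgrade the qualitative statement $\|\mathcal{D}^{(\e)}_{t_\e+\vr}\|\to 0$ used in Corollary~\ref{cor:infinitoslevy} to a rate: the estimates \eqref{eq:expet}--\eqref{eq:te1} together with \eqref{eq:zero1} give $\|\mathcal{D}^{(\e)}_t\|\lqq C(\nu)\,\e t$ whenever $\e t\lqq 1$, hence $\|\mathcal{D}^{(\e)}_{t_\e+\vr}\|=O(\e|\ln(a_\e)|)$ and $\|\mathcal{D}^{(\e)}_{\delta t_\e}\|=O(\e\delta|\ln(a_\e)|)$, both tending to $0$ by \eqref{eq:doblelimitelevy}. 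I would also record the elementary two-sided bound $e^{-\la_{N_h}t}|v_h|\lqq|\exp(tA)h|\lqq e^{-\la_{N_h}t}|h|$, valid for every $t\gqq 0$, which follows from $\exp(tA)h=\sum_{k\in\kK_h}e^{-\la_k t}\<e_k,h\>e_k$, the inequality $\la_k\gqq\la_{N_h}$ on $\kK_h$, and \eqref{eq:vh}.

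With these two ingredients the four analogues are mechanical. For the profile error estimate (analogue of Corollary~\ref{cor:errorheatm45}), write, as in the proof of Corollary~\ref{cor:infinitoslevy},
\[
\frac{\mathcal{W}_2(\xX^\e_{t_\e+\vr}(h),\mu^\e)}{a_\e}=e^{-\la_{N_h}\vr}\,|\exp(\mathcal{D}^{(\e)}_{t_\e+\vr})\,w_\e|,\qquad w_\e:=e^{\la_{N_h}(t_\e+\vr)}\exp((t_\e+\vr)A)h,
\]
and bound the distance to $\pP_h(\vr)=e^{-\la_{N_h}\vr}|v_h|$ by the triangle inequality through $\|\exp(\mathcal{D}^{(\e)}_{t_\e+\vr})-I\|\,|w_\e|$ and $|w_\e-v_h|$. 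The first term is $\lqq(e^{\|\mathcal{D}^{(\e)}_{t_\e+\vr}\|}-1)|h|=O(\e|\ln(a_\e)|)\,|h|$ by the two-sided bound and the rate above, while the second is controlled exactly as in Corollary~\ref{cor:errorheatm45} via \eqref{eq:errordif}, contributing $K\,a_\e^{1-\la_{N_h}/\la_{N^*_h}}|h|$. Hence one gets the estimate of Corollary~\ref{cor:errorheatm45} augmented by a term of order $\e|\ln(a_\e)|\,|h|$; this extra term is genuinely present here and absent for $Q$-Brownian noise, because for the L\'evy flow $\|\mathcal{D}^{(\e)}_t\|$ grows like $\e t$ rather than like $\e^2 t$.

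For simple cutoff (analogue of Corollary~\ref{cor:simpleheatm45}) I would evaluate at $t=\delta t_\e$, use $\exp(\widetilde{\aA}^{\e}_{\delta t_\e})=\exp(\mathcal{D}^{(\e)}_{\delta t_\e})\exp(\delta t_\e A)$ with $\|\mathcal{D}^{(\e)}_{\delta t_\e}\|\to 0$, and the two-sided bound with $e^{-\la_{N_h}\delta t_\e}=a_\e^{\delta}$ to trap $\mathcal{W}_2(\xX^\e_{\delta t_\e}(h),\mu^\e)/a_\e$, for $\e$ small, between constant multiples of $a_\e^{\delta-1}$; since $a_\e\to 0$ this gives $\infty$ for $\delta\in(0,1)$ and $0$ for $\delta\in(1,\infty)$. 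For large initial data (analogue of Corollary~\ref{cor:heatmuchosm45}) I would use that \eqref{e:multi1levy} is linear in the initial datum, so $\xX^\e_t(h/a_\e)=a_\e^{-1}\xX^\e_t(h)$ $\mathbb{P}$-almost surely and, since $\mu^\e=\delta_0$, $\mathcal{W}_2(\xX^\e_t(h/a_\e),\mu^\e)=a_\e^{-1}\mathcal{W}_2(\xX^\e_t(h),\mu^\e)$; then Corollary~\ref{cor:infinitoslevy} gives $\lim_{\e\to 0}\mathcal{W}_2(\xX^\e_{t_\e+\vr}(h/a_\e),\mu^\e)=\pP_h(\vr)$, and the remaining statements transfer verbatim. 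Finally, the analogue of Corollary~\ref{cor:infinitos} is immediate, since a countable family of independent pure-jump L\'evy noises with diagonal coefficients aggregates into a single pure-jump L\'evy process whose L\'evy measure is supported on the diagonal operators and still satisfies \eqref{eq:zero1}, and is therefore a special case of Corollary~\ref{cor:infinitoslevy}.

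The only step that requires genuine care, hence the main obstacle, is the quantitative control of $\|\mathcal{D}^{(\e)}_t\|$ with the correct \emph{linear} growth in $t$ on the window $\e t\lqq 1$: it is precisely this rate, measured against the logarithmic horizon $t_\e\sim\la_{N_h}^{-1}|\ln(a_\e)|$, that makes the nonlinear correction $\mathcal{D}^{(\e)}_{t_\e+\vr}$ asymptotically negligible and explains the second requirement $\lim_{\e\to 0}\e|\ln(a_\e)|=0$ in \eqref{eq:doblelimitelevy}; this is where the second-moment condition \eqref{eq:zero1} and the remainder bound \eqref{eq:te1} for $\|z\|<1$ enter.
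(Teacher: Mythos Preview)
The paper states this corollary without any proof whatsoever, so there is nothing to compare against line by line. Your argument is correct and is precisely the route the paper implicitly intends: carry the computation of Theorem~\ref{t:multprofilelevy} over to the $\eta\to 0$ limit via Lemma~\ref{lem:exprepresentlevyinfint}, reduce everything to $\mathcal{W}_2(\xX^\e_t(h),\mu^\e)=|\exp(tA+\mathcal{D}^{(\e)}_t)h|$, and then mimic the proofs of Corollaries~\ref{cor:infinitos}--\ref{cor:heatmuchosm45} with the diagonal perturbation $\mathcal{D}^{(\e)}_t$ controlled in operator norm.

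Two remarks. First, your observation that the profile error estimate in the L\'evy case must carry an additional term of order $\e|\ln(a_\e)|\,|h|$ (coming from $\|\exp(\mathcal{D}^{(\e)}_{t_\e+\vr})-I\|$, which scales like $\e t$ rather than $\e^2 t$) is correct and in fact sharpens what the paper asserts; the paper's blanket phrase ``the analogues \dots\ are valid'' glosses over this, and the same omission already occurs in the unproven Corollary~\ref{cor:errorheatm45} for the Brownian case. Second, your reading of ``analogue of Corollary~\ref{cor:infinitos}'' as aggregating independent diagonal pure-jump drivers into a single L\'evy measure on diagonal operators is a reasonable interpretation, though the paper's intent is probably looser---Corollary~\ref{cor:infinitoslevy} itself already plays the role of Corollary~\ref{cor:infinitos} in the L\'evy column, and the sentence is likely just a catch-all. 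Either way, your argument covers it.
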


\appendix
\section{Proof of the shift linearity of the Wasserstein distance in infinite dimensions}\label{Ap:A1}
\begin{proof}[Proof of Lemma~\ref{lem:properties}, item (d)] Synchronous replica $(U_1,U_1)$ with joint law
{
$\Pi(\ud v_1, \ud v_2)$} (natural coupling) yields the upper bound for any $p>0$ as follows: 
\begin{equation}\label{eq:cotasuperior1}
{
\begin{split}
\mathcal{W}_p(u_1+U_1,U_1)&\lqq \left(\int_{H \times H} |u_1+v_1-v_2|^p\Pi(\ud v_1,\ud v_2)\right)^{\min\{1,1/p\}}\\
&=\left(\int_{\{(v_1,v_2)\in H\times H: v_1=v_2\}} |u_1+v_1-v_2|^p\Pi(\ud v_1,\ud v_2)\right)^{\min\{1,1/p\}}
&=|u_1|^{\min\{1,p\}}.
\end{split}}
\end{equation}
Let $\pi$ be a coupling between $u_1+U_1$ and $U_1$.
Note that
\[
\int_{H \times H}(u-v)\pi(\ud u,\ud v)=\int_{H \times H}u\pi(\ud u,\ud v)-
\int_{H \times H}v\pi(\ud u,\ud v)
=\EE[u_1+U_1]-\EE[U_1]=u_1.
\]
Then
\[
|u_1|=\Big|\int_{H \times H}(u-v)\pi(\ud u,\ud v)\Big|\lqq 
\int_{H \times H}|u-v|\pi(\ud u,\ud v).
\]
Minimizing over all possible couplings between $u_1+U_1$ and $U_1$, we obtain
\begin{equation}\label{eq:cotainferior1}
|u_1|\lqq \mathcal{W}_1(u_1+U_1,U_1).
\end{equation}
For $p\gqq 1$, Jensen's inequality with the help of \eqref{eq:cotasuperior1} and \eqref{eq:cotainferior1}
yields 
\[ 
|u_1|\lqq \mathcal{W}_1(u_1+U_1,U_1)\lqq \mathcal{W}_p(u_1+U_1,U_1)\lqq |u_1|,
\]
and hence $\mathcal{W}_p(u_1+U_1,U_1)=|u_1|$.
For $p\in(0,1)$, the triangle inequality  and the translation invariance (b) of Lemma~\ref{lem:properties} imply
 \begin{align*}
|u_1|^p=\mathcal{W}_p(u_1,0)&\lqq \mathcal{W}_p(u_1,u_1+U)+\mathcal{W}_p(u_1+U,U)+\mathcal{W}_p(U,0)=\mathcal{W}_p(u_1+U,U)+2\EE[|U|^p],
\end{align*}
and hence
\begin{equation}\label{eq:desigualdadinferior}
\mathcal{W}_p(u_1+U,U)\gqq |u_1|^p-2\EE[|U|^p].
\end{equation}
Combining (\ref{eq:cotasuperior1}) and {(\ref{eq:desigualdadinferior})} we obtain (\ref{e:cotauni}). This finishes the proof of item (d).
\end{proof}

\section{Proof of Lemma~\ref{lem:exprepresentlevy}}\label{Ap:A2}
\begin{proof}[Proof of Lemma~\ref{lem:exprepresentlevy}]
Without loss of generality it is enough to show the case $\e=1$. {We drop the dependence on $\e$ and $\eta\in (0,1)$.}
It is enough to show that the right-hand side of \eqref{e:stochexp} defines a solution of \eqref{e:LMN99}. The uniqueness is straightforward. {Recall the definition of $\log(I+z)$ for $\|z\|<1$ given in \eqref{eq:formalog}.}
First we simplify 
\begin{align*}
\Theta_t&=
t\cdot \Big(A -\int_{\eta\lqq \|z\|<1} z \nu(\ud z)\Big)+\int_{0}^{t}\int_{\eta\lqq \|z\|<1}
\log(I+z)N(\ud z, \ud s)\\
&= t\cdot \Big(A -\int_{\eta\lqq \|z\|<1} z \nu(\ud z)\Big)+\sum_{0< s\lqq t} \log(I + (L_s - L_{s-})).
\end{align*}
Next, we introduce the stopping times $T_n := \inf\{t>T_{n-1}~|~(L_t-L_{t-})\neq 0\}$, for $n\in \NN$, and $T_0 = 0$. 
Taking the standard telescopic sums (see proof of Lemma~4.4.6 in~\cite{APPLEBAUMBOOK}), 
\begin{align}\label{e:telescopic}
\exp(\Theta_t) - I
&= \sum_{n =1}^\infty 
\Big(\exp(\Theta_{t\wedge T_n-})-\exp(\Theta_{t\wedge T_{n-1}})\Big) 
+\sum_{n =1}^\infty \Big( \exp(\Theta_{t\wedge T_n})-\exp(\Theta_{t\wedge T_n-})\Big),
\end{align}
however, with a close control of all the commutativity conditions applied.    
We start with the first term on the right-hand side of \eqref{e:telescopic}.  
Note that for $T_{n-1}\lqq s < T_n$ we have 
\begin{align*}
\Theta_s
&= s\cdot \Big(A -\int_{\eta\lqq \|z\|<1} z \nu(\ud z)\Big)+\sum_{m=1}^{n-1} 
\log(I + (L_{T_m} - L_{T_{m}-})).
\end{align*}
In addition, the operator 
\[Q:= A -\int_{\eta\lqq \|z\|<1} z \nu(\ud z)\] 
is the generator of the strongly continuous semigroup $(\exp(s\cdot Q))_{s\gqq 0}$.  
By the commutativity hypothesis \eqref{e:comutalev}, we have that
\[
Q \quad \mbox{ and }\quad \sum_{m=1}^{n-1} 
\log(I + (L_{T_m} - L_{T_{m}-}))
\] 
{commute.}
The linearity of the semigroup yields
{
\begin{align*}
&\exp\Big(\Theta_{t\wedge T_n-}\Big)-\exp\Big(\Theta_{t\wedge T_{n-1}}\Big) \nonumber\\
&=\exp\Big((t\wedge T_n-)\cdot Q \exp\Big(\sum_{m=1}^{n-1} \log(I + (L_{T_m} - L_{T_{m}-}))\Big)\Big)  - 
\exp\Big((t\wedge T_{n-1})\cdot Q \exp\Big(\sum_{m=1}^{n-1} \log(I + (L_{T_m} - L_{T_{m}-}))\Big)\Big)\nonumber\\
&=\Big[\exp\Big((t\wedge T_n-)\cdot Q\Big)  - 
\exp\Big((t\wedge T_{n-1})\cdot Q\Big)\Big] \exp\Big(\sum_{m=1}^{n-1} \log(I + (L_{T_m} - L_{T_{m}-}))\Big)\nonumber\\
&= \int_{t\wedge T_{n-1}}^{t\wedge T_n-} Q 
\exp\Big(s\cdot Q\Big) \ud s \exp\Big(\sum_{m=1}^{n-1} \log(I + (L_{T_m} - L_{T_{m}-}))\Big)\nonumber\\
&= \int_{t\wedge T_{n-1}}^{t\wedge T_n-} Q 
\exp\Big(s\cdot Q\Big) 
\exp\Big(\sum_{m=1}^{n-1} \log(I + (L_{T_m} - L_{T_{m}-}))\Big)
\ud s \nonumber\\
&= \int_{t\wedge T_{n-1}}^{t\wedge T_n-} Q \exp\Big(\Theta_{s}\Big) \ud s = \int_{t\wedge T_{n-1}}^{t\wedge T_n} Q \exp\Big(\Theta_{s}\Big) \ud s.
\end{align*}
}
Since all $T_m < \infty$ a.s., $m\in\NN$, we have 

\begin{equation}\label{e:partecontinua}
\begin{split}
\sum_{n =1}^\infty \Big(\exp(\Theta_{t\wedge T_n-})-\exp(\Theta_{t\wedge T_{n-1}})\Big) 
&= \sum_{n =1}^\infty \int_{t\wedge T_{n-1}}^{t\wedge T_n} Q \exp\Big(\Theta_{s}\Big) \ud s = \int_{0}^{t} \Big(A -\int_{\eta\lqq \|z\|<1} z \nu(\ud z)\Big) \exp\Big(\Theta_{s}\Big) \ud s.
\end{split}
\end{equation}
We continue with the second term on the right-hand side of \eqref{e:telescopic}: 
\begin{align}
&\sum_{n =1}^\infty 
\Big(\exp(\Theta_{t\wedge T_n})-\exp(\Theta_{t\wedge T_{n-}})\Big) \nonumber\\
% &= \sum_{0< s \lqq t} 
% \exp\Big(s \cdot \Big(A -\int\limits_{\eta\lqq \|z\|<1} z \nu(\ud z)\Big)+\sum_{0< r\lqq s} \log(I + (L_r - L_{r-}))\Big)\\
% &\qquad -\exp\Big((s-) \cdot \Big(A -\int\limits_{\eta\lqq \|z\|<1} z \nu(\ud z)\Big)+\sum_{0< r\lqq s-} \log(I + (L_r - L_{r-}))\Big)
% \\
&= \sum_{n =1}^\infty 
\exp\Big(T_n \cdot \Big(A-\int_{\eta\lqq \|z\|<1} z \nu(\ud z)\Big)
+\sum_{0\lqq m \lqq n} \log(I + (L_{T_m} - L_{T_m-}))\Big)\nonumber\\
&\qquad -\exp\Big((T_n-) \cdot Q+\sum_{0\lqq m < n} \log(I + (L_{T_m} - L_{T_m-}))\Big)
\nonumber\\
% \\
&= \sum_{n =1}^\infty 
\exp\Big((T_n-) \cdot Q
+\sum_{0\lqq m < n} \log(I + (L_{T_m} - L_{T_m-}))\Big) \exp
\Big(\log(I + (L_{T_n} - L_{T_n-}))\Big)\nonumber\\
&\qquad -\exp\Big((T_n-) \cdot Q+\sum_{0\lqq m < n} \log(I + (L_{T_m} - L_{T_m-}))\Big)
\nonumber\\
&= \sum_{n =1}^\infty 
\exp\Big((T_n-) \cdot \Big(A-\int_{\eta\lqq \|z\|<1} z \nu(\ud z)\Big)
+\sum_{0\lqq m < n} \log(I + (L_{T_m} - L_{T_m-}))\Big)\nonumber \\
&\qquad\qquad\qquad \cdot
\Big[\exp
\Big(\log(I + (L_{T_n} - L_{T_n-}))\Big)-I\Big]\nonumber\\
&= \sum_{n =1}^\infty
\exp\Big((T_n-) \cdot Q
+\sum_{0\lqq m < n} \log(I + (L_{T_m} - L_{T_m-})\Big) \Big[L_{T_n} - L_{T_n-}\Big]\nonumber\\
&= \int_0^t \int_{\eta \lqq \|z\|<1} z\xX_{s-} N(\ud z,\ud s).\label{e:partediscontinua}
\end{align}
Combining \eqref{e:telescopic} with \eqref{e:partecontinua} and \eqref{e:partediscontinua}, 
we have that $(\mathcal{E}_{t})_{t\gqq 0}$ is a {(strong)} solution of \eqref{e:LMN99}. 
\end{proof}

\section*{Acknowledgments}
\noindent
The research of GB has been supported by the Academy of Finland, via 
the Matter and Materials Profi4 University Profiling Action, 
an Academy project (project No. 339228)
and the Finnish Centre of Excellence in Randomness and STructures (project No. 346306). GB also would like to express
his gratitude to University of Helsinki for all the facilities used along
the realization of this work. 
The research of MAH has been supported by the 
Proyecto INV-2019-84-1837 de la Convocatoria 2020-2021:
``Stochastic dynamics of systems perturbed with small Markovian noise with applications in biophysics, climatology and statistics''
of Facultad de Ciencias at Universidad de los Andes. 

\section*{Statements and Declarations}
\subsection*{Availability of data and material}
Data sharing not applicable to this article as no datasets were generated or analyzed during the current study.
\subsection*{Conflict of interests} The authors declare that they have no conflict of interest.
\subsection*{Authors' contributions}
All authors have contributed equally to the paper.

\end{document}